\def\d{\delta} 
\def\ds{\displaystyle} 
\def\e{{\epsilon}}
\renewcommand{\r}{{\bf R}} 
\title{Stochastic Optimization Using a Trust-Region Method and Random Models}\author{
R. Chen\thanks{ {\tt Ruobing.Chen@us.bosch.com}, Bosch Research. The work of this author was
 partially supported by NSF Grant CCF-1320137 and 
 AFOSR Grant FA9550-11-1-0239.}
\and
M. Menickelly \thanks{{\tt mjm412@lehigh.edu}, Department of Industrial and Systems Engineering, Lehigh University,
Harold S. Mohler Laboratory, 200 West Packer Avenue, Bethlehem, PA 18015-1582, USA. The work of this author is
partially supported by NSF Grants DMS 13-19356 and CCF-1320137}
\and
K. Scheinberg\thanks{{\tt katyas@lehigh.edu}
Department of Industrial and Systems Engineering, Lehigh University,
Harold S. Mohler Laboratory, 200 West Packer Avenue, Bethlehem, PA 18015-1582, USA
({\tt katyas@lehigh.edu}). The work of this author is partially supported by NSF Grants DMS 10-16571, DMS 13-19356, CCF-1320137,
 AFOSR Grant FA9550-11-1-0239, and  DARPA grant FA 9550-12-1-0406 negotiated by AFOSR.}
}
\begin{document}

\maketitle


\begin{abstract}
In this paper, we propose and analyze a trust-region model-based algorithm for solving unconstrained stochastic optimization problems. 
Our framework utilizes random models of an objective function $f(x)$, obtained from stochastic observations of the function or its gradient. Our method also utilizes estimates of function values to gauge progress that is being made. The convergence analysis relies on requirements that these models and these estimates are sufficiently accurate with high enough, but fixed, probability. Beyond these conditions, no assumptions are made on how these models and estimates are generated. Under these general conditions we show an almost sure global convergence of the method to a first order stationary point. In the second part of the paper, we present examples of generating sufficiently accurate random models under biased or unbiased noise assumptions. Lastly, we present some computational results showing the benefits of the proposed method compared to existing approaches that are based on sample averaging or stochastic gradients. 
\end{abstract}

\section{Introduction}\label{sec.introduction}
Derivative free optimization (DFO) \cite{DFObook} has recently grown as a field of nonlinear optimization which addressed optimization of black-box functions, that is functions whose value can be (approximately) computed by some numerical procedure or an experiment, while their closed-form expressions and/or derivatives are not available and cannot be approximated accurately or efficiently. Although the role of derivative-free optimization is particularly important when objective functions are noisy, traditional DFO  methods have been developed primarily for deterministic functions. The fields of stochastic optimization \cite{RobbinsMonro,RuszShapiroBook} and stochastic approximation \cite{SpallBook} on the other hand 
 focus on optimizing functions that are stochastic in nature. Much of the focus of these methods depend on the availability and use of stochastic derivatives, however, some work has addressed stochastic black box functions, typically by some sort of a finite differencing scheme \cite{KieferWolfowitz}. 
 
 In this paper, using methods developed for DFO, we aim to solve
 \begin{eqnarray}\label{eq:object}
\ds\min\limits_{x\in \r^n} \,  f(x)
\end{eqnarray}
where  $f(x)$ is a function which is  assumed to be smooth and bounded from below, and whose value can only be computed with some noise. 
Let $\tilde{f}$ be the noisy computable version of $f$, which takes the form
$$\tilde{f}(x) = f(x,\varepsilon)$$
where the noise $\varepsilon$ is a random variable.
 
 In recent years, some DFO methods have been extended to and analyzed for stochastic functions \cite{DengFerris}. Additionally, stochastic approximation methodologies
 started to incorporate techniques from the DFO literature \cite{ChangHongWan}. 
 The analysis in all that work assumes some particular structure of the noise, 
 including the assumption that the noisy function values give an unbiased estimator of the true function value.

 There are two main classes of methods in this setting of stochastic optimization: stochastic gradient (SG) methods (such as the well known Robbins-Monro method) and sample averaging (SA) methods. The former (SG) methods roughly work as follows: they obtain a realization of an unbiased estimator of the gradient at each iteration and take a step in the direction of the negative gradient. The step sizes progressively diminish and the iterates are averaged to form a sequence that converges to a solution. These methods typically have very inexpensive iterations, but exhibit slow convergence and are strongly dependent on the choice of algorithmic parameters, particularly the sequence of step sizes. Many variants exist that average the gradient information from past iterations and are able to accept sufficiently small, but nondecreasing step sizes \cite{PolyakJuditsky, BachMoulines}. However, the convergence remains slow, and parameter tuning remains necessary in most cases.  
 Moreover, the majority of these methods have been developed exclusively for convex functions and may not converge in non convex settings.
An exception is the randomized stochastic (accelerated) gradient (RS(A)G) method presented in \cite{ghadimilanconvex}, \cite{ghadimilannonconvex}. This method employs random stopping criteria to provide theoretical first-order convergence even in nonconvex cases, but does not appear to be very practical.

 The second class of methods, (SA), is based on sample averaging of the function and gradient estimators,which is applied to reduce the variance of the noise. 
 These methods repeatedly
 sample the function value at a set of points in hopes to ensure sufficient accuracy of the function and gradient estimates. For a thorough introduction and references therein, see \cite{pasupathyinformstutorial}.
 The optimization method and sampling process are usually tightly connected in these approaches, hence, again, algorithmic parameters need to be specially chosen and tuned. 
 These methods tend to be more robust with respect to parameters and enjoy faster convergence at a cost of more expensive iterations. 
 However, none of these methods are applicable in the case of  biased noise and they suffer significantly in the presence of outliers.

  The goal of this paper
 is to show that a {\em standard efficient unconstrained optimization method}, such as a trust region method, can be applied, with very small modifications, to 
 stochastic nonlinear (not necessarily convex) functions and can be guaranteed to converge to  first order stationary points
 as long as certain conditions are satisfied. 
We present a general framework, where we do not specify any particular sampling technique. 
 The framework is based on the trust region DFO framework \cite{DFObook}, and its extension to probabilistic models \cite{Bandeiraetal2013}. 
In terms of this framework and the certain conditions that must be satisfied, 
 we essentially assume that 
 \begin{itemize}
 \item
  the local models of the objective function constructed on each iteration satisfy some first order accuracy requirement with sufficiently high probability, 
 \item and  that function estimates at
 the current iterate and at a potential next iterate are sufficiently accurate with sufficiently high probability. 
 \end{itemize}
 The main novelty of this work is the analysis of the framework and the resulting weaker, more general, conditions compared to prior work. 
 In particular,
 \begin{itemize}
 \item we do not assume  that the probabilities of obtaining sufficiently accurate models and estimates are increasing 
 (they simply need to be above a certain constant) and
 \item  we do not assume any distribution of the random models and estimates.
  In other words, if a model or estimate is inaccurate, it can be arbitrarily inaccurate, i.e. the noise in the function values can have nonconstant bias. 
  \end{itemize}
  It is also important to note that while our framework and model requirements are 
  borrowed from prior work on DFO, this framework  applies to derivative-based optimization as well. Later in the paper we will discuss different settings which will 
 fit into the proposed framework.

 This paper consists of two main parts. In the first part we  propose and analyze a trust region framework, which utilizes random models of $f(x)$ at each iteration to compute the next potential iterate.  It also relies on (random, noisy) estimates of the function values at the current iterate and the potential iterate to gauge the progress that is being made. The convergence analysis then relies on requirements that these models and these estimates are sufficiently accurate with sufficiently high probability. 
 Beyond these conditions, no assumptions are made about how these models and estimates are generated. The resulting method is a stochastic process that is analyzed with the help of martingale theory.  The method is shown to converge to first order stationary points with probability one. 
 
  In the second part of the paper we consider various scenarios under different assumptions on the   noise inducing component $\varepsilon$ and 
  discuss how  sufficiently accurate random models 
  can be generated. In particular, we show that in the case of unbiased noise, that is when 
 $ \mathbb{E}[{f}(x,\varepsilon)] = f(x)$ and  ${\rm Var}[f(x,\varepsilon)]\leq\sigma^2< \infty $ for all $x$,  sample averaging techniques give us 
 sufficiently accurate models. Although we will prove convergence under the mentioned framework that essentially says we have the ability to compute both sufficiently accurate models and estimates with constant, separate probabilities, it is not necessarily easy to estimate what these probabilities ought to be for a given problem. While we provide some guidance on the selection of sampling rates in an unbiased noise setting in Section 5, our numerical experiments show that the bounds on probabilities suggested by our theory to be necessary for almost sure convergence are far from tight. 

 We also discuss the case where $ \mathbb{E}[{f}(x,\varepsilon)] \neq f(x)$, and where the noise bias may depend on $x$ or 
 on the method of computation of the function values. 
     One simple setting, which is illustrative, is as follows. 
 Suppose we have an objective function, which is computed by a numerical process, whose accuracy can be controlled 
 (such as tightening a convergence criterion within this numerical process). Suppose now that this numerical process involves some random component (such as sampling from some large data and/or utilizing a randomized algorithm). It may be known that with sufficiently high probability this numerical process produces a sufficiently accurate 
 function value - however, with some small (but nonzero) probability the numerical process may fail and hence no reasonable value is guaranteed. 
 Moreover, such failures may become more likely as more accurate computations are required (for instance because an upper bound on the total number of iterations is reached inside the numerical process). Hence the probability of failure may depend on the current iterate and state of the algorithm. Here we simply assume that such failures do not occur with probability higher than some constant (which will be specified in our analysis), conditioned on the past. However, we do not assume anything about the inaccurate function values.  As we will demonstrate  later in this paper, in this setting, $ \mathbb{E}[{f}(x,\varepsilon)] \neq f(x)$. 

   \subsection{Comparison with related work} 
 There is a very large volume of work on SA and SG, most of which is quite different from our proposed analysis and method. 
 However, we will mention a few works here that are most closely related to this paper and highlight the differences. The three  methods existing in the literature we will compare with are by
 Deng and Ferris \cite{DengFerris}, SPSA (simultaneous perturbations stochastic approximation) \cite{SPSA,2SPSA}, and SCSR (sampling controlled stochastic recursion) \cite{Pasupathyetal}. These three settings and methods are most closely related to our work
 because they all rely on using models of the objective function that can both incorporate second-order information and whose accuracy with respect to a ``true" model
 can be dynamically adjusted. In particular,  Deng and Ferris  apply the trust-region model-based derivative free optimization method UOBYQA \cite{PowellUOBYQA} in a setting of sample path optimization \cite{Robinson}. In \cite{SPSA} and    \cite{2SPSA}, the author applies an approximate gradient descent and Newton method, respectively,
 with  gradient and Hessian estimates computed from specially designed finite differencing techniques, with decaying finite differencing parameter. In \cite{Pasupathyetal} a very general scheme is presented, 
 where various deterministic optimization algorithms are generalized as stochastic counterparts, with the stochastic component arising from the stochasticity of the 
 models and the resulting step of the optimization algorithm. We now compare some key components of these three methods with those of our framework, which we hereforth refer to as STORM (STochastic  Optimization with Random Models).

 {\bf Deng and Ferris:}  The assumptions of the sample path setting are roughly as follows: on each iteration $k$, 
 given a collection of points $X^k=\{ x_1^k, \ldots, x_p^k\}$ 
 one can compute noisy function values $f(x_1^k, \varepsilon ^k),\ldots,  f(x_p^k, \varepsilon ^k) $.
 The noisy function values are assumed to be realizations of an unbiased estimator of true values $f(x_1^k),\ldots,  f(x_p^k)$.
 Then, using multiple, say $N_k$, realizations of $\varepsilon^k$, average function values $f^{N_k}(x_1^k),\ldots,  f^{N_k}(x_p^k) $ can be computed. 
 A quadratic model $m^{N_k}_k(x)$ is then fit into these function values, and so a sequence of models $\{m^{N_k}_k(x)\}$ is created using a nondecreasing sequence of sampling rates $\{N_k\}$. 
 The assumption on this sequence of  models  is that each of them satisfies a sufficient decrease condition (with respect to the true model of the true function $f$)
  with  probability $1-\alpha_k$, such that $\sum_{k=1}^\infty \alpha_k< \infty$. 
  The trust region maintenance follows the usual scheme like that in UOBYQA, hence the steps taken by the algorithm can be increased or decreased depending on the observed improvement of the function estimates. 
  
  {\bf SPSA:} The first order version of this method assumes that $f(x,\varepsilon)$ is an unbiased estimate of $f(x)$, and the second order version, 2SPSA, 
  assumes that an unbiased estimate of $\nabla f(x)$, $g(x,\varepsilon)\in \r^n$, can be computed.  Gradient (in the first order case) and Hessian (in the second order case) estimates are constructed using an interesting randomized finite differencing scheme. The finite difference step is assumed to be decaying to zero.
 An approximate steepest descent direction or approximate Newton direction are then constructed and a step of length $t_k$ is taken along this direction. The sequence $\{t_k\}$ is assumed to be decaying in the usual Robbins-Monro way, that is $t_k\to 0$, $\sum_k t_k =\infty$. 
  Hence, while no increase in accuracy of the models is assumed (they only need to be accurate in expectation), the step size parameter and the finite differencing parameter need to be tuned. Decaying step sizes often lead to slow convergence, as has been observed often in stochastic optimization literature. 
  
  {\bf SCSR:} This is a very general scheme which can include multiple optimization methods and sampling rates. The key ingredients of this scheme are a deterministic optimization method, and a stochastic variant that approximates it. The stochastic step (recursion) is assumed to be a sufficiently accurate approximation of the deterministic step  with  increasing probability (the probabilities of failure for each iteration are summable). This assumption is stronger than the one in this paper. In addition, another key assumption made for SCSR is that the iterates produced by the base deterministic algorithm converge to the unique optimal minimizer $x^*$.
  Not only we do not assume here that the minimizer/stationary point is unique, but we also do not assume a priori that the iterates form a convergent sequence, since they may not do so in a non convex setting, while every iterate subsequence converges to a stationary point. 
  
  {\bf STORM:}  Like the Deng and Ferris method, we utilize a trust-region, model-based framework, where the size of the trust region can be increased or decreased according to empirically observed function decrease and the size of the observed approximate gradients. The desired accuracy of the models is tied only to the trust region radius
  in our case, while for Deng and Ferris, it is tied to both the radius and the size of true model gradients (the second condition is harder to ensure). In either method, this desired accuracy is assumed to hold with some probability - in STORM, this probability remains constant throughout the progress of the algorithm, while for Deng and Ferris it has to  converge to $1$ sufficiently rapidly. 

There are three major advantages to our results. First of all, in the case of unbiased noise, the sampling rate is directly connected to the desired accuracy 
  of the estimates  and the probability with which this accuracy is achieved. Hence, for the STORM method, the sampling rate may increase or decrease according to the trust region radius, eventually increasing only when necessary, i.e. when the noise become dominating.  For all the other methods listed here, the sampling rate is assumed to increase monotonically. Secondly, in the case of biased noise, we can still prove convergence  of our method, as long as the desired accuracy is achieved with a fixed probability. In other words, we
  allow for the noise to be arbitrarily large with a small, but fixed probability, on each iteration. This allows us to consider new models of noise which cannot be handled by any of the other  methods discussed here. 
 In addition, STORM incorporates first and second order models without changing the algorithm - the step size parameter (i.e., the trust region radius) and other parameters of the method are chosen almost identically to the standard practices of the trust region methods, which have proved to be very effective in practice for unconstrained nonlinear optimization. In Section \ref{sec:computations} we show that the STORM method is very effective in different noise settings and is very robust with respect to sampling strategies.

Finally,  we want to point to an unpublished work  \cite{Larson2016}, which proposes  a very similar method to the one in this paper. Both methods were developed based on the trust region DFO method with random models for deterministic functions analyzed in \cite{Bandeiraetal2013} and extended to the stochastic setting. Some of the assumptions in this paper were inspired by an early version of \cite{Larson2016}. However, the assumptions and the analysis in \cite{Larson2016} are quite different from the ones in this paper. In particular, they rely on the assumption
that $f(x,\varepsilon)$ is an unbiased estimate of $f(x)$, hence their analysis does not extend to the biased case. Also they assume that the  probability of having an accurate model
at the $k$-th iteration is at least $1-\alpha_k$, such that $\alpha_k\to 0$, while for our method this probability can remain bounded away from zero. Similarly, they assume that the probability  of 
having accurate function estimates at the $k$-th iteration also converges to $1$ sufficiently rapidly, while in our case it is again constant. Their analysis, as a result, is very different from ours, and does not generalize to various stochastic settings (they only focus on the derivative free setting with additive noise).  The advantage of their method is that they do not need to put a restriction on acceptable step sizes, when the norm  of the gradient of the model is small. We, on the other hand, impose such restriction in our method and use it in the proof of our main result. However, as we discuss later in the paper
 this restriction can be relaxed at the cost of more complex algorithm and analysis. In practice, we do not implement this restriction, hence our basic  implementation is virtually identical to that in  \cite{Larson2016} except that we implement a variety of model building strategies, while only one such strategy 
 (regression models based on randomly rotated orthogonal samples sets) is implemented in   \cite{Larson2016}. Thus we do not directly compare the empirical performance of our method with the method in   \cite{Larson2016} since we view them as more or less the same method. 
 

  We conclude this section by introducing some frequently used notations and their meanings. The rest of the paper is organized as follows. In Section \ref{sec:TR-method}
  we introduce the trust region framework, followed by Section \ref{sec:models-estimates}, where we discuss the requirements on our random models and function estimates. The main convergence results are presented in Section \ref{sec.stodfoconvergence}. In Section \ref{sec:learningbounds} we discuss various noise scenarios and how sufficiently accurate models and estimates can be constructed in these cases. Finally, we present computational experiments based on these various noise scenarios
  in Section \ref{sec:computations}.
 
 \paragraph{Notations.}
Let $\|\cdot\|$ denote the  Euclidean norm and $B(x, \Delta)$ denote the ball of radius $\Delta$ around $x$, i.e., $B(x, \Delta): \{y:\| x-y\|\le \Delta\}$. 
 $\Omega$ denotes the probability sample space, according to the context, and a sample from that space is denoted by $\omega\in\Omega$. 
 As a rule, when we describe a random process within the algorithmic framework, uppercase letters, e.g. the $k$-th iterate $X_k$, will denote random variables, while lowercase letters will denote realizations of the random variable, e.g. $x_k=X_k(\omega)$ is the $k$-th iterate for a particular realization of our algorithm. 
 
We also list here, for convenience, several constants that are used in the paper to bound various quantities. These constants are denoted by $\kappa$ with subscripts indicating quantities that they are meant to bound. 
\begin{equation*}
    \begin{aligned}
\kappa_{ef} & \mbox{\quad``error in the function value",}\\
\kappa_{eg} &  \mbox{\quad``error in the gradient",}\\
\kappa_{Eef} & \mbox{\quad``expectation of the error in the function value",}\\
\kappa_{fcd} & \mbox{\quad``fraction of Cauchy decrease",}\\
\kappa_{bhm}&\mbox{\quad``bound on the Hessian of the models",}\\
\kappa_{et} & \mbox{\quad``error in Taylor expansion".}
     \end{aligned}
\end{equation*}

\section{Trust Region Method}\label{sec:TR-method}

We consider the trust-region class of methods for minimization of unconstrained functions. They operate as follows:
 at each iteration $k$, given the current iterate $x_k$ and a trust-region radius $\d_k$,
 a (random) model $m_k(x)$ is built, which serves as an approximation of $f(x)$ in $B(x_k, \d_k)$. 
 The model is assumed to be of the form
 \begin{equation}\label{eq:model}
 m_k(x_k+s)=f_k+g_k^\top s+s^\top H_k s.
 \end{equation}
 It is possible to generalize our framework to other forms of models, as long as all conditions on the models, described below, hold. We consider quadratic models for simplicity of the presentation and because they are the most common. 
 The model $m_k(x)$ is minimized (approximately) in $B(x_k, \d_k)$ to produce 
 a step $s_k$ and (random) estimates of $f(x_k)$ and $f(x_k+s_k)$ are obtained, denoted by  $f_k^0$ and $ f_k^s$ respectively. The achieved reduction is measured by comparing $f_k^0$ and $f_k^s$ and if reduction is deemed sufficient, then $x_k+s_k$ is chosen as the next iterate $x_{k+1}$. Otherwise the iterate remains  $x_k$. The trust-region radius $\d_{k+1}$ is then chosen by either increasing or decreasing $\d_k$ according to the outcome of the iteration. The details of the algorithm are presented in Algorithm \ref{algo:stodfosimple}. 

\balgorithm
\caption{{\sc Stochastic DFO with Random Models}}
\label{algo:stodfosimple}
 \textbf{(Initialization):} Choose an initial point $x_0$ and an initial trust-region radius $\d_0\in (0,\d_{\max})$ with $\d_{\max}>0$. Choose  constants $\gamma>1$, $\eta_1\in(0,1)$, $\eta_2 > 0$, set $k \gets 0$.\\
\label{step.model} \textbf{(Model construction):} Build a model $m_k(x_k+s)=f_k+g_k^\top s+s^\top H_k s$ that approximates $f(x)$ on $B(x_k, \d_k)$ with $s=x-x_k$.\\
\textbf{(Step calculation):} Compute $s_k = \arg \underset{s: \| s \|\le \d_k}{\min}  m_k(s) $ (approximately) so that it satisfies condition \eqref{eqn:CS}.\\ 
\textbf{(Estimates calculation):} Obtain estimates $f_k^0$ and $f_k^s $ of $f(x_k)$ and $f(x_k+s_k)$, respectively.\\
\textbf{(Acceptance of the trial point):} Compute $\rho_k = \dfrac{f_k^0-f_k^s}{m_k(x_k)-m_k(x_k+s_k)}.$ If $\rho_k\ge \eta_1$ and $ \| g_k\| \ge \eta_2 \d_k $, set $x_{k+1} =x_k+s_k$; otherwise, set $x_{k+1}=x_k$.\\
\textbf{(Trust-region  radius update):} If $\rho_k\ge \eta_1$ and $ \| g_k\| \ge \eta_2 \d_k $, set $\d_{k+1} =\min\{  \gamma \d_k,\d_{\max}\}$; otherwise $\d_{k+1}=\gamma^{-1} \d_k$; $k \gets k+1$ and go to step \ref{step.model}.\\
\ealgorithm

The trial step computed on each iteration has to provide sufficient decrease of the model; in other words it has to satisfy the following standard fraction of Cauchy decrease condition:
\begin{assumption}
\label{asmpCS}
For every $k$, the  step $s_k$ is computed so that
\begin{eqnarray}
m_k(x_k)-m_k(x_k+s_k) \ge \dfrac{\kappa_{fcd}}{2} \| g_k \| \min \left\{ \dfrac{\|g_k\|}{\|H_k\|},\d_k  \right\}  \label{eqn:CS}
\end{eqnarray}
for some constant $\kappa_{fcd}\in(0,1].$
\end{assumption}

If progress is achieved and a new iterate is accepted in the $k$-th iteration then we call this a \emph{successful iteration}. Otherwise, the iteration is unsuccessful (and no step is taken). Hence a successful iteration occurs when $\rho_k\ge\eta_1$ 
and $ \| g_k\| \ge \eta_2 \d_k $.  However, a successful iteration does not necessarily yield an actual reduction in the true function $f$. This is because the values of $f(x)$ are not accessible in our stochastic setting 
and the step acceptance decision is made merely based on the estimates of $f(x_k)$ and $f(x_k+s_k)$. If these estimates, $f_k^0$ and $f_k^s$,  are not accurate enough, a successful iteration can result in an increase of  the true function value.  Hence we consider two types of successful iterations - those where $f(x)$ is in fact decreased proportionally to  $f_k^0-f_k^s$, which  we  call {\em true} successful iterations, and all other successful iterations, where the decrease of $f(x)$ can be arbitrarily small or even negative, which we call {\em false} successful iterations. Our setting and algorithmic framework does not allow us to determine which successful iterations are true and which ones are false, however, we will be able to show that true successful iterations occur sufficiently often for convergence to hold, if the random estimates $f_k^0$ and $f_k^s$ are sufficiently accurate. 

A trust region framework based on random models was introduced and analyzed in \cite{Bandeiraetal2013}.  In that paper, the authors introduced the concept of probabilistically fully-linear models to determine the conditions that random models should satisfy for convergence of the algorithm to hold. However,  the randomness in the models  in their setting arises from the the construction process, and not from the noisy objective function. 
It  is assumed in \cite{Bandeiraetal2013} that the function values at the current iterate and the trial point can be computed exactly and hence all successful iterations are true in that case. 
In our case,  it is necessary to define a measure for the accuracy of the estimates $f_k^0$ and $f_k^s$ (which, as we will see, generally has to be tighter than the measure of  accuracy of the model).   We will use a modified version of the probabilistic estimates introduced in \cite{larson2013stochastic}.

\section{Probabilistic Models and Estimates} \label{sec:models-estimates}
The models in this paper are functions which are constructed on each iteration, based on some random samples of stochastic function $\tilde f(x)$. Hence, the models themselves are random and so is their behavior and influence on the iterations. Hence, $M_k$  will denote a random model in the $k$-th iteration, while we will use the notation $m_k=M_k(\omega)$ for its realizations. As a consequence of using  random models, the iterates $X_k$, the trust-region radii $\Delta_k$ and the steps $S_k$  are also random quantities, and so $x_k=X_k(\omega)$, $\d_k = \Delta_k(\omega)$, $s_k=S_k(\omega)$ will denote their respective realizations.  Similarly, let random quantities  $\{F_k^0,F_k^s\}$ denote the estimates of $f(X_k)$ and $f(X_k+S_k)$, with their realizations denoted by $f_k^0=F_k^0(\omega)$ and $f_k^s=F_k^s(\omega)$.  In other words, Algorithm \ref{algo:stodfosimple} results in a stochastic process $\{M_k,X_k,S_k, \Delta_k, F_k^0, F_k^s\}$. Our goal is to show that under certain conditions on the sequences $\{M_k\}$
and $\{F_k^0, F_k^s\}$ the resulting stochastic process has desirable convergence properties with probability one. In particular, we will 
assume that  models $M_k$
and estimates  $F_k^0, F_k^s$ are sufficiently accurate with sufficiently high probability, conditioned on the past. 

To formalize conditioning on the past, let $\mathcal{F}_{k-1}^{M\cdot F}$ denote the $\sigma$-algebra generated by $M_0,\cdots, M_{k-1}$ and $F_0,\cdots,F_{k-1}$ and let $\mathcal{F}_{k-{1}/{2}}^{M\cdot F}$ denote the $\sigma$-algebra generated by $M_0,\cdots, M_{k}$ and $F_0,\cdots,F_{k-1}$. 

 To formalize sufficient accuracy, let us  recall a measure for the accuracy of deterministic models introduced in \cite{DFObook} and \cite{ConnScheVice09} (with the exact notation introduced in \cite{billups2013derivative}). 

\begin{definition}
Suppose $\nabla f$ is Lipschitz continuous. A function $m_k$ is a \emph{$\kappa$-fully linear model of $f$ on $B(x_k,\d_k)$} provided, for $ \kappa = (\kappa_{ef}, \kappa_{eg})$ and  $\forall y \in B$,
\begin{eqnarray}\label{eq:fl-model}
\| \nabla f(y) - \nabla m_k(y)  \| & \le & \kappa_{eg} \d_k, \mbox{ and }\\
| f(y)-m_k(y)| &\le & \kappa_{ef} \d_k^2.\nonumber
\end{eqnarray}
\end{definition}

In this paper we extend  the following concept of probabilistically fully-linear models which is proposed in \cite{Bandeiraetal2013}.

\begin{definition}\label{def:pfl-models-old}
A sequence of random models $\{ M_k \}$ is said to be $\alpha$-probabilistically $\kappa$-fully linear with respect to the corresponding sequence $\{ B(X_k,\Delta_k)\}$ if the events 
\begin{equation}\label{eq:Ik}
I_k = \{ M_k \mbox{ is a } \kappa \mbox{-fully linear model of } f \mbox{ on }B(X_k,\Delta_k)  \}
\end{equation}
satisfy the condition
$$ P (I_k | \mathcal{F}_{k-1}^M)\ge \alpha ,$$
where $\mathcal{F}^M_{k-1}$ is the $\sigma$-algebra generated by $M_0,\cdots, M_{k-1}$.
\label{probabilisticmodel}
\end{definition}
These probabilistically fully-linear models have the very simple properties that they are fully-linear (i.e., accurate enough) with sufficiently high probability, conditioned on the past, and they can be arbitrarily inaccurate otherwise. 
This property is somewhat different from the properties of models typical to stochastic optimization (such as, for example, stochastic gradient based models), where assumptions on the expected value and the variance of the models is imposed. We will discuss this in more detail in Section \ref{sec:learningbounds}.

In this paper, aside from sufficiently accurate models, we require estimates of the function values $f(x_k)$, $f(x_k+s_k)$ that are sufficiently accurate.  This is needed in order to evaluate whether a step is successful, unlike the case in \cite{Bandeiraetal2013} where the exact values $f(x_k)$ and $f(x_k+s_k)$ are assumed to be available.   The following definition of accurate estimates
is  a modified version of that used in \cite{larson2013stochastic}.
 
\begin{definition}\label{def:pa-estimates-old}   
The estimates $f_k^0$ and $f_k^s$ are said to be $\epsilon_F$-accurate estimates of $f(x_k)$ and $f(x_k+s_k)$, respectively,  for a given $\delta_k$ if
\begin{equation}\label{eq:estimates}
 |f_k^0 - f(x_k) |  \le \epsilon_F \d_k^2 \mbox{ and } |f_k^s - f(x_k+s_k) | \le \epsilon_F \d_k^2.
 \end{equation}
\end{definition} 

We  now modify Definitions \ref{def:pfl-models-old} and \ref{def:pa-estimates-old}   
and introduce definitions  of  probabilistically accurate models and estimates which we will use throughout the remainder of the paper.

\begin{definition}\label{def:pfl-models}
A sequence of random models $\{ M_k \}$ is said to be $\alpha$-probabilistically $\kappa$-fully linear with respect to the corresponding sequence $\{ B(X_k,\Delta_k)\}$ if the events 
\begin{equation}\label{eq:Ik}
I_k = \{ M_k \mbox{ is a } \kappa \mbox{-fully linear model of } f \mbox{ on }B(X_k,\Delta_k)  \}
\end{equation}
satisfy the condition
$$ P (I_k | \mathcal{F}_{k-1}^{M\cdot F})\ge \alpha ,$$
where $\mathcal{F}^{M\cdot F}_{k-1}$ is the $\sigma$-algebra generated by $M_0,\cdots, M_{k-1}$ and $F_0,\cdots,F_{k-1}$.
\label{probabilisticmodel}
\end{definition}

\begin{definition}\label{def:pa-estimates} A sequence of random estimates $\{F_k^0,F_k^s\}$ is said to be $\beta$-probabilistically $\epsilon_F$-accurate with respect to the corresponding sequence $\{X_k,\Delta_k,S_k\}$ if the events
\begin{equation}\label{eq:Jk}
 J_k = \{F_k^0,F_k^s \mbox{ are } \epsilon_F\mbox{-accurate estimates of }f(x_k) \mbox{ and }f(x_k+s_k), \mbox{ respectively, \ for\ } \Delta_k  \}  
 \end{equation}
satisfy the condition 
$$P( J_k|\mathcal{F}_{k-1/2}^{M\cdot F}  )\ge \beta,$$
where $\epsilon_F$ is a fixed constant and $\mathcal{F}_{k-{1}/{2}}^{M\cdot F}$ is the $\sigma$-algebra generated by $M_0,\cdots, M_{k}$ and $F_0,\cdots,F_{k-1}$. 
\label{asmpesti}
\end{definition}


Using Definitions \ref{def:pfl-models} and \ref{def:pa-estimates} we  will  require in our analysis that our method has access to 
$\alpha$-probabilistically $\kappa$-fully linear models, for some fixed $ \kappa = (\kappa_{ef}, \kappa_{eg})$ and to 
$\beta$-probabilistically $\epsilon_F$-accurate estimates, for some fixed, sufficiently small $\epsilon_F$. 
Thus, the model and the estimate accuracy will be assumed to be  proportional to $\delta_k^2$ (with some probability), the condition on the estimates is somewhat tighter because of the  upper bound  on $\epsilon_F$. 
 However, we will see that this upper bound is not very small. 

Procedures for obtaining probabilistically fully-linear models and probabilistically accurate estimates under different models of noise are discussed in Section \ref{sec:learningbounds}. 
\section{Convergence Analysis}\label{sec.stodfoconvergence}

We now present first-order convergence analysis for  the general framework described in Algorithm \ref{algo:stodfosimple}.  Towards that end, we assume that the function $f$ and its gradient are Lipschitz continuous in regions considered by the algorithm realizations. 

\begin{assumption} [{\rm Assumptions on $f$}] Let $x_0$ and $\d_{\max}$ be given. 
Let ${\cal L}(x_0)$ define the set in $\mathbb{R}^n$ which contains all iterates of our algorithm. Assume that $f$ is bounded from below on ${\cal L}(x_0)$.
Assume also that the function $f$ and its gradient $\nabla f$ are $L$-Lipschitz continuous on the set ${\cal L}_{enl}(x_0)$, where ${\cal L}_{enl}(x_0)$ defines the region considered by the algorithm realizations 
$${\cal L}_{enl}(x_0)=\underset{x\in L(x_0)}{\bigcup} B(x; \d_{\max}). $$\label{asmpf}
\end{assumption}

\begin{remark}
In the case of deterministic functions  ${\cal L}(x_0)=\{ x\in \mathbb{R}^n:\, f(x)\le f(x_0)  \}$, because algorithm iterates never increase the objective function value, hence they do not step outside the initial level set. However, here we allow iterates to increase the function value, because the true function value is not known. Such iterates, as we will see, may happen with some (relatively small) probability, hence the algorithm can venture outside the initial level set. 
Hence we choose to make the assumption above, which of course depends on the algorithmic behavior. Clearly, if we assume a global Lipschitz constant and global lower bound, then the above assumption always holds. If we prefer to weaken this assumption, then there are several algorithmic remedies
possible, however, they will make our analysis more complicated and we choose to leave it for future work. It will be evident from our analysis, that 
with a probability arbitrarily close to 1 we can bound the set ${\cal L}(x_0)$. 
\end{remark}


The second assumption provides a uniform upper bound on the model Hessian.
\begin{assumption}
\label{asmpH}There exists a positive constant $\kappa_{bhm} $ such that, for every $k$, the Hessian $H_k$ of all realizations $m_k$ of $M_k$ satisfy
\begin{eqnarray*}
\| H_k \|\le \kappa_{bhm} .
\end{eqnarray*}\end{assumption}
Note that since we are concerned with convergence to a first order stationary point in this paper, the bound $\kappa_{bhm}$ can be chosen to be any nonnegative number, including zero. Allowing a larger bound will give more flexibility to the algorithm and may allow better Hessian approximations, but as we will see in the convergence analysis, this imposes restrictions on the trust region radius and some other algorithmic parameters. 

We now state the following result from martingale literature \cite{durrett2010probability} (see Exercise 5.3.1) that will be useful later in our analysis. 
\begin{theorem}\label{submartingale} 
Let $G_k$ be a submartingale, i.e., a sequence of random variables which, for every $k$, 
$$E[G_k|\mathcal{F}_{k-1}^G]\ge G_{k-1},$$
where $\mathcal{F}_{k-1}^G=\sigma(G_0,\ldots,G_{k-1})$ is the $\sigma$-algebra generated by $G_0,\ldots, G_{k-1}$, and $E[G_k|\mathcal{F}_{k-1}^G]$ denotes the conditional expectation of $G_k$ given the past history of events $\mathcal{F}_{k-1}^G$.

Assume further that $G_k-G_{k-1}\le M<\infty$, for every $k$. Then,
\begin{eqnarray}
P\left( \left\{ \underset{k\to \infty}{\lim} G_k <\infty  \right\} \cup\left\{  \underset{k\to \infty}{\lim\sup}\,  G_k=\infty  \right\}  \right)=1.
\end{eqnarray}
\end{theorem}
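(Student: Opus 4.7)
The plan is to establish the stated dichotomy by showing that on the event where $G_k$ fails to escape to $+\infty$, it must in fact converge to a finite limit almost surely. The natural tool is Doob's submartingale convergence theorem, applied after a stopping-time truncation that uses the bounded-increment hypothesis to turn the ``boundedness from above" into integrability of the positive part.

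First I would fix $a > 0$ and define the stopping time $T_a = \inf\{k : G_k > a\}$ with respect to the filtration $\mathcal{F}_k^G$. By the optional stopping theorem for submartingales, the stopped process $G_{k \wedge T_a}$ is again a submartingale. The key observation is the following consequence of the assumption $G_k - G_{k-1} \le M$: on the event $\{T_a > k\}$ we have $G_k \le a$ by definition, while on $\{T_a \le k\}$ we have $G_{T_a} \le G_{T_a - 1} + M \le a + M$, so in either case $G_{k \wedge T_a} \le a + M$. In particular $\sup_k \mathbb{E}[G_{k \wedge T_a}^+] \le a + M < \infty$.

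Next I would invoke the classical submartingale convergence theorem (whose proof goes through Doob's upcrossing inequality): any submartingale $L_k$ with $\sup_k \mathbb{E}[L_k^+] < \infty$ converges almost surely to an integrable random variable. Applied to $L_k = G_{k \wedge T_a}$, this yields that $\lim_k G_{k \wedge T_a}$ exists and is finite almost surely. On the event $\{T_a = \infty\}$ the processes $G_k$ and $G_{k \wedge T_a}$ agree for every $k$, and therefore $\lim_k G_k$ exists and is finite almost surely on $\{T_a = \infty\}$.

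Finally I would assemble the dichotomy from the countable decomposition
\begin{equation*}
\{\limsup_k G_k < \infty\} \;=\; \bigcup_{a \in \mathbb{N}} \{\sup_k G_k \le a\} \;\subseteq\; \bigcup_{a \in \mathbb{N}} \{T_a = \infty\}.
\end{equation*}
Since the previous step gives a null exceptional set for each integer $a$, the countable union of these null sets is still null, and on the complement we have that $\{\limsup_k G_k < \infty\}$ is contained, up to a null set, in $\{\lim_k G_k \text{ exists and is finite}\}$. This yields the claimed equality of probabilities. The only real obstacle is making sure the truncation argument is valid; the bounded-increment hypothesis is used in a single but essential place, namely to bound $G_{k \wedge T_a}$ above by $a + M$ so that Doob's theorem applies, while the rest is bookkeeping with stopping times and a countable union.
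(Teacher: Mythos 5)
Your proof is correct and is the standard argument for this result, which the paper does not actually prove but simply cites from Durrett (Exercise 5.3.1); the stopping-time truncation $T_a$, Doob's convergence theorem for submartingales with $\sup_k E[L_k^+]<\infty$, and the countable union over $a\in\mathbb{N}$ constitute exactly the intended route. The only cosmetic point is that on the event $\{T_a=0\}$ the bound $G_{k\wedge T_a}\le a+M$ need not hold (there is no prior index to step back from), but one still gets $\sup_k E[G_{k\wedge T_a}^+]\le a+M+E[G_0^+]<\infty$ from the integrability of $G_0$, so Doob's theorem applies unchanged and the rest of your argument goes through.
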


%

We now prove some auxiliary lemmas that provide conditions under which decrease of the true objective function $f(x)$ is guaranteed. The first lemma states that if the trust region radius is small enough relative to the size of the model gradient and if the model
is fully linear, then the step $s_k$ provides a decrease in $f(x)$ proportional to the size of the model gradient. Note that the trial step may still be rejected if the estimates $f_k^0$ and $f_k^s$ are not accurate enough. 

\begin{lemma}\label{lemma.delta.1}
Suppose that a model $m_k$ of the form \eqref{eq:model} is a $(\kappa_{ef},\kappa_{eg})$-fully linear model of $f$ on $B(x_k,\d_k)$. If
$$\d_k\le\min\left\{ \frac{1}{\kappa_{bhm}},\frac{\kappa_{fcd}}{8\kappa_{ef}}  \right\} \|g_k\|,$$
then the trial step $s_k$ leads to an improvement in $f(x_k+s_k)$ such that
\begin{eqnarray}f(x_k+s_k)-f(x_k)\le - \frac{\kappa_{fcd}}{4} \|g_k\|\d_k .
\end{eqnarray}
\end{lemma}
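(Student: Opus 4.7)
The plan is to decompose $f(x_k+s_k) - f(x_k)$ through the model, use the Cauchy decrease condition on the model reduction, and then absorb the model-accuracy error using the assumed upper bound on $\delta_k$.

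First I would argue that under the assumed bound on $\delta_k$, the Cauchy decrease condition \eqref{eqn:CS} simplifies. Since $\delta_k \le \|g_k\|/\kappa_{bhm}$ and $\|H_k\| \le \kappa_{bhm}$ by Assumption \ref{asmpH}, we have $\delta_k \le \|g_k\|/\|H_k\|$, so that $\min\{\|g_k\|/\|H_k\|, \delta_k\} = \delta_k$. Therefore Assumption \ref{asmpCS} yields the clean estimate
\begin{equation*}
m_k(x_k) - m_k(x_k+s_k) \;\ge\; \frac{\kappa_{fcd}}{2}\,\|g_k\|\,\delta_k.
\end{equation*}

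Next I would invoke full linearity. Writing
\begin{equation*}
f(x_k+s_k) - f(x_k) = \bigl[f(x_k+s_k)-m_k(x_k+s_k)\bigr] + \bigl[m_k(x_k+s_k)-m_k(x_k)\bigr] + \bigl[m_k(x_k)-f(x_k)\bigr],
\end{equation*}
applying $|f(y)-m_k(y)| \le \kappa_{ef}\delta_k^2$ at both $y=x_k$ and $y=x_k+s_k$, and combining with the Cauchy bound above gives
\begin{equation*}
f(x_k+s_k) - f(x_k) \;\le\; 2\kappa_{ef}\delta_k^2 - \frac{\kappa_{fcd}}{2}\,\|g_k\|\,\delta_k.
\end{equation*}

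Finally I would use the second part of the hypothesis, $\delta_k \le (\kappa_{fcd}/(8\kappa_{ef}))\|g_k\|$, which gives $2\kappa_{ef}\delta_k \le (\kappa_{fcd}/4)\|g_k\|$, so $2\kappa_{ef}\delta_k^2 \le (\kappa_{fcd}/4)\|g_k\|\delta_k$. Substituting yields the desired
\begin{equation*}
f(x_k+s_k) - f(x_k) \;\le\; \frac{\kappa_{fcd}}{4}\|g_k\|\delta_k - \frac{\kappa_{fcd}}{2}\|g_k\|\delta_k = -\frac{\kappa_{fcd}}{4}\|g_k\|\delta_k.
\end{equation*}
There is no real obstacle here; the only care needed is that both hypotheses on $\delta_k$ are used in distinct places (one to eliminate the $\min$ in Cauchy decrease, the other to dominate the $O(\delta_k^2)$ full-linearity error by half of the $O(\|g_k\|\delta_k)$ model decrease), and that the inequalities chain in the correct direction.
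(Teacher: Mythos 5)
Your proposal is correct and follows essentially the same argument as the paper's proof: both use the first bound on $\delta_k$ (together with $\|H_k\|\le\kappa_{bhm}$) to reduce the Cauchy decrease to $\frac{\kappa_{fcd}}{2}\|g_k\|\delta_k$, then decompose $f(x_k+s_k)-f(x_k)$ through the model, apply full linearity at both points, and absorb the $2\kappa_{ef}\delta_k^2$ error via the second bound on $\delta_k$. No differences worth noting.
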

\begin{proof} 

Using the Cauchy decrease condition, the upper bound on model Hessian and the fact that $\|g_k\|\ge \kappa_{bhm}\d_k$, we have
\begin{eqnarray*}
m_k(x_k)-m_k(x_k+s_k) \ge\frac{\kappa_{fcd}}{2} \| g_k \| \min \left\{ \frac{\| g_k\| }{\| H_k \|} ,\delta_k\right\}= \frac{\kappa_{fcd}}{2}\|g_k\|\delta_k.
\end{eqnarray*}

Since the model is $\kappa$-fully linear, one can express the improvement in $f$ achieved by $s_k$ as
\begin{eqnarray*}
&&f(x_k+s_k)-f(x_k)\\
&=& f(x_k+s_k)-m(x_k+s_k)+m(x_k+s_k)-m(x_k)+m(x_k)-f(x_k)\\
&\le &2\kappa_{ef}\d_k^2-\frac{\kappa_{fcd}}{2}\|g_k\|\delta_k\\
&\le & - \frac{\kappa_{fcd}}{4} \|g_k\|\d_k,
\end{eqnarray*}
where the last inequality is implied by $\d_k\le\frac{\kappa_{fcd}}{8\kappa_{ef}} \|g_k\|$.
\hfill\end{proof} 

The next lemma shows that for $\delta_k$ small enough relative to the size of the true gradient $\nabla f(x_k)$, the guaranteed decrease in the objective function, provided by $s_k$, is proportional to the size of the true gradient. 

\begin{lemma}\label{lemma.delta.2} Under Assumption   \ref{asmpH}, suppose that a model is $(\kappa_{ef},\kappa_{eg})$-fully linear on $B(x_k,\d_k)$. If
\begin{eqnarray}\label{condition.lemma.delta.2}
\d_k\le \min \left\{ \frac{1}{\kappa_{bhm}+\kappa_{eg}}, \frac{1}{\frac{8\kappa_{ef}}{\kappa_{fcd}}+\kappa_{eg}}\right\} \|\nabla f(x_k)\|,
\end{eqnarray}
then the trial step $s_k$ leads to an improvement in $f(x_k+s_k)$ such that
\begin{eqnarray}f(x_k+s_k)-f(x_k)\le - C_1 \|\nabla f(x_k)\|\d_k , \label{eqn.true.decrease}
\end{eqnarray}
where $C_1=\frac{\kappa_{fcd}}{4}\cdot \max\left\{ \frac{\kappa_{bhm}}{\kappa_{bhm}+\kappa_{eg}},\frac{8\kappa_{ef}}{8\kappa_{ef}+\kappa_{fcd}\kappa_{eg}}\right\}.$
\end{lemma}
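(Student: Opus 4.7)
The plan is to reduce this lemma to Lemma \ref{lemma.delta.1} by using the fully-linear property of $m_k$ to translate the hypothesis on $\|\nabla f(x_k)\|$ into the hypothesis on $\|g_k\|$ required by Lemma \ref{lemma.delta.1}, and then to translate the resulting decrease bound (expressed in terms of $\|g_k\|$) back into one expressed in terms of $\|\nabla f(x_k)\|$. The $\kappa$-fully linear property gives the key inequality $\|g_k - \nabla f(x_k)\| \le \kappa_{eg}\d_k$, which I will use in both directions: as $\|g_k\| \ge \|\nabla f(x_k)\| - \kappa_{eg}\d_k$ going forward, and implicitly as $\|\nabla f(x_k)\| \le \|g_k\| + \kappa_{eg}\d_k$ going backward.

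First I would verify that the two assumed upper bounds on $\d_k$ imply the two upper bounds on $\d_k$ required by Lemma \ref{lemma.delta.1}. Rearranging $\d_k(\kappa_{bhm}+\kappa_{eg}) \le \|\nabla f(x_k)\|$ gives $\kappa_{bhm}\d_k \le \|\nabla f(x_k)\| - \kappa_{eg}\d_k \le \|g_k\|$, hence $\d_k \le \|g_k\|/\kappa_{bhm}$. An identical computation with $8\kappa_{ef}/\kappa_{fcd}$ in place of $\kappa_{bhm}$ yields $\d_k \le \frac{\kappa_{fcd}}{8\kappa_{ef}}\|g_k\|$. So the hypotheses of Lemma \ref{lemma.delta.1} are satisfied, giving
\begin{equation*}
f(x_k+s_k) - f(x_k) \le -\frac{\kappa_{fcd}}{4}\|g_k\|\d_k.
\end{equation*}

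Next I would obtain a lower bound on $\|g_k\|$ in terms of $\|\nabla f(x_k)\|$. From the first of the two rearranged hypotheses, $(\kappa_{bhm}+\kappa_{eg})\d_k \le \|\nabla f(x_k)\|$, so $\kappa_{eg}\d_k \le \frac{\kappa_{eg}}{\kappa_{bhm}+\kappa_{eg}}\|\nabla f(x_k)\|$ and therefore
\begin{equation*}
\|g_k\| \ge \|\nabla f(x_k)\| - \kappa_{eg}\d_k \ge \frac{\kappa_{bhm}}{\kappa_{bhm}+\kappa_{eg}}\|\nabla f(x_k)\|.
\end{equation*}
From the second hypothesis the analogous manipulation produces $\|g_k\| \ge \frac{8\kappa_{ef}}{8\kappa_{ef}+\kappa_{fcd}\kappa_{eg}}\|\nabla f(x_k)\|$. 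Since both hypotheses hold simultaneously, both lower bounds hold, and I can take the larger of the two, which is exactly the max appearing inside the definition of $C_1$. Combining this with the decrease bound from Lemma \ref{lemma.delta.1} delivers \eqref{eqn.true.decrease}.

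I do not anticipate a main obstacle here; the proof is essentially algebraic bookkeeping. The only mild subtlety is recognizing that the two quite symmetric-looking clauses in the $\min$ defining the hypothesis each buy a different lower bound on $\|g_k\|$, so the correct constant $C_1$ is the product of $\kappa_{fcd}/4$ with the max (not the min) of the two ratios — reflecting that whichever clause is binding, we get the better of the two bounds for free from the other.
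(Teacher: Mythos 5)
Your proposal is correct and follows essentially the same route as the paper's proof: use the fully-linear bound $\|g_k\| \ge \|\nabla f(x_k)\| - \kappa_{eg}\d_k$ to verify the hypotheses of Lemma \ref{lemma.delta.1}, invoke that lemma, and then convert the resulting $\|g_k\|$-decrease into a $\|\nabla f(x_k)\|$-decrease via the same inequality, taking the max of the two resulting ratios. The algebra checks out and matches the paper's constant $C_1$ exactly.
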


\begin{proof}
The definition of a $\kappa$-fully-linear model yields that
$$\|g_k\|\ge \|\nabla f(x)\|-\kappa_{eg}\delta_k .$$ 
Since condition (\ref{condition.lemma.delta.2})  implies that $\| \nabla f(x_k) \| \ge\max\left\{ \kappa_{bhm}+\kappa_{eg},\frac{8\kappa_{ef}}{\kappa_{fcd}}+\kappa_{eg}\right\}\d_k   $, we have
$$\|  g_k \| \ge \max\left\{ \kappa_{bhm}  , \frac{8\kappa_{ef}}{\kappa_{fcd}}\right\} \d_k.$$
Hence, the conditions of Lemma \ref{lemma.delta.1} hold and we have 
\begin{eqnarray}\label{lemma.delta.2.eqn.1}
f(x_k+s_k)-f(x_k)\le  - \frac{\kappa_{fcd}}{4} \|g_k\|\d_k .
\end{eqnarray}
Since $\|g_k\|\ge \|\nabla f(x)\|-\kappa_{eg}\delta_k$ in which $\d_k$ satisfies (\ref{condition.lemma.delta.2}), we also have 
\begin{eqnarray}\label{lemma.delta.2.eqn.2}
\|g_k\| \ge   \max\left\{ \frac{\kappa_{bhm}}{\kappa_{bhm}+\kappa_{eg}},\frac{8\kappa_{ef}}{8\kappa_{ef}+\kappa_{fcd}\kappa_{eg}}\right\}\|\nabla f(x_k)\| .
\end{eqnarray}
Combining (\ref{lemma.delta.2.eqn.1}) and (\ref{lemma.delta.2.eqn.2}) yields (\ref{eqn.true.decrease}).
\hfill\end{proof}

We now prove the lemma that states that, if a) the estimates are sufficiently accurate, b) the model is fully-linear and c) the trust-region radius is sufficiently small relatively to the size of the model gradient, then a successful step is guaranteed. 

\begin{lemma}\label{lemma.delta.3} Under Assumption \ref{asmpH}, suppose that $m_k$ is $(\kappa_{ef},\kappa_{eg})$-fully linear on $B(x_k,\d_k)$ and the estimates $\{ f_k^0,f_k^s \}$ are $\epsilon_F$-accurate with $\epsilon_F\le \kappa_{ef}$. If 
\begin{eqnarray}\label{condition.lemma.delta.3}
 \d_k \le \min \left\{ \dfrac{1}{\kappa_{bhm}} , \frac{1}{\eta_2}, \dfrac{ \kappa_{fcd}  (1-\eta_1)}{ 8\kappa_{ef} }   \right\} \| g_k \| , 
 \end{eqnarray}
then the $k$-th iteration is successful.
\end{lemma}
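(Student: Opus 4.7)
The plan is to verify the two conditions defining a successful iteration at iteration $k$, namely $\rho_k \ge \eta_1$ and $\|g_k\| \ge \eta_2 \delta_k$. The second condition is immediate: the middle term in the min of \eqref{condition.lemma.delta.3} gives $\delta_k \le \|g_k\|/\eta_2$, i.e., $\|g_k\| \ge \eta_2 \delta_k$. So the entire work is in showing $\rho_k \ge \eta_1$.

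First I would establish a lower bound on the model decrease. The bound $\delta_k \le \|g_k\|/\kappa_{bhm}$ from \eqref{condition.lemma.delta.3}, together with Assumption \ref{asmpH} and the fraction of Cauchy decrease condition \eqref{eqn:CS}, reduces the min in \eqref{eqn:CS} to $\delta_k$, giving
\begin{equation*}
m_k(x_k)-m_k(x_k+s_k) \;\ge\; \tfrac{\kappa_{fcd}}{2}\|g_k\|\delta_k.
\end{equation*}

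Next I would bound the numerator error $|(f_k^0-f_k^s)-(m_k(x_k)-m_k(x_k+s_k))|$ by four terms via the triangle inequality: $|f_k^0 - f(x_k)|$ and $|f_k^s - f(x_k+s_k)|$ are each at most $\epsilon_F \delta_k^2 \le \kappa_{ef}\delta_k^2$ by the $\epsilon_F$-accuracy hypothesis and $\epsilon_F \le \kappa_{ef}$, while $|f(x_k)-m_k(x_k)|$ and $|f(x_k+s_k)-m_k(x_k+s_k)|$ are each at most $\kappa_{ef}\delta_k^2$ by the fully linear property (since both $x_k$ and $x_k+s_k$ lie in $B(x_k,\delta_k)$). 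Summing gives
\begin{equation*}
\bigl|(f_k^0-f_k^s)-(m_k(x_k)-m_k(x_k+s_k))\bigr| \;\le\; 4\kappa_{ef}\delta_k^2.
\end{equation*}

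Combining the two displays,
\begin{equation*}
|\rho_k - 1| \;\le\; \frac{4\kappa_{ef}\delta_k^2}{\tfrac{\kappa_{fcd}}{2}\|g_k\|\delta_k} \;=\; \frac{8\kappa_{ef}\delta_k}{\kappa_{fcd}\|g_k\|}.
\end{equation*}
The third term in \eqref{condition.lemma.delta.3}, $\delta_k \le \kappa_{fcd}(1-\eta_1)\|g_k\|/(8\kappa_{ef})$, is precisely what is needed to make the right-hand side at most $1-\eta_1$, yielding $\rho_k \ge \eta_1$. There is no real obstacle here; the only subtle point is the bookkeeping that shows why the three upper bounds on $\delta_k$ in \eqref{condition.lemma.delta.3} are exactly the three conditions needed (to collapse the Cauchy min, to give $\|g_k\|\ge \eta_2\delta_k$, and to control $|\rho_k - 1|$), and why the constant $\epsilon_F \le \kappa_{ef}$ gives the factor of $4$ rather than something larger in the numerator-error bound.
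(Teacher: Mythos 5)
Your proposal is correct and follows essentially the same route as the paper's proof: collapse the Cauchy decrease min using $\delta_k\le\|g_k\|/\kappa_{bhm}$, bound the four error terms (two from $\epsilon_F$-accuracy with $\epsilon_F\le\kappa_{ef}$, two from full linearity) by $\kappa_{ef}\delta_k^2$ each, and conclude $|\rho_k-1|\le 8\kappa_{ef}\delta_k/(\kappa_{fcd}\|g_k\|)\le 1-\eta_1$, with the $\eta_2$ condition read off directly. The only cosmetic difference is that the paper writes out the five-term telescoping decomposition of $\rho_k$ explicitly rather than bounding the numerator error separately.
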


\begin{proof}
Since $  \d_k \le \frac{\| g_k \|}{\kappa_{bhm}}$, the Cauchy decrease condition and the uniform bound on $H_k$ immediately yield that 
\begin{eqnarray}
m_k(x_k) -m_k(x_k+s_k) \ge \dfrac{\kappa_{fcd}}{2} \|g_k\| \min\left\{  \dfrac{\| g_k\|}{\kappa_{bhm}  } ,\d_k\right\} = \dfrac{\kappa_{fcd}}{2}\| g_k \|\d_k.\label{eqn:deltainc1}
\end{eqnarray}
The model $m_k$ being  $(\kappa_{ef},\kappa_{eg})$-fully linear implies that
\begin{eqnarray}
|f(x_k)-m_k(x_k)|&\le& \kappa_{ef}\d_{k}^2,\mbox{ and } \label{eqn:deltainc2}\\
|f(x_k+s_k)-m_k(x_k+s_k)|&\le& \kappa_{ef}\d_{k}^2.\label{eqn:deltainc3}
\end{eqnarray}
Since the estimates are $\e_F$-accurate with $\epsilon_F\le \kappa_{ef}$, we obtain\begin{eqnarray}
|f_k^0-f(x_k)|\le \kappa_{ef}\d_k^2,\mbox{ and } |f_k^s-f(x_k+s_k)|\le \kappa_{ef}\d_k^2.\label{eqn:deltainc4}
\end{eqnarray}

We have
\begin{eqnarray*}
\rho_k &= &\dfrac{f_k^0-f_k^s}{m_k(x_k)-m_k(x_k+s_k)}\\
&=&  \dfrac{f_k^0-f(x_k) }{m_k(x_k)-m_k(x_k+s_k)}+\dfrac{f(x_k) -m_k(x_k)}{m_k(x_k)-m_k(x_k+s_k)}+\dfrac{m_k(x_k)-m_k(x_k+s_k) }{m_k(x_k)-m_k(x_k+s_k)}\\
&& + \dfrac{m_k(x_k+s_k)-f(x_k+s_k)}{m_k(x_k)-m_k(x_k+s_k)}+\dfrac{f(x_k+s_k)   -f_k^s   }{m_k(x_k)-m_k(x_k+s_k)},
\end{eqnarray*}
which , combined with (\ref{eqn:deltainc1})-(\ref{eqn:deltainc4}), implies
\begin{eqnarray*}
| \rho_k-1|\le \dfrac{8\kappa_{ef}\d_k^2}{\kappa_{fcd}\|g_k\| \d_k }\le 1-\eta_1,
\end{eqnarray*}
where we have used the assumption $\d_k \le \frac{ \kappa_{fcd}(1-\eta_1) }{ 8\kappa_{ef}  }  \| g_k \|   $ to deduce the last inequality. Hence, $\rho_k\ge\eta_1$. Moreover, since $\|  g_k\|\ge \eta_2\d_k$, the $k$-th iteration is successful.
\hfill\end{proof} 

Finally, we state and prove the lemma which guarantees an amount of decrease of the  objective function on a true successful iteration.

\begin{lemma}\label{lemma.delta.4}
Under Assumption \ref{asmpH}, suppose that the estimates $\{f_k^0,f_k^s  \}$ are $\e_F$-accurate with $\e_F< \frac{1}{4}\eta_1\eta_2 \kappa_{fcd}\min \left\{ \frac{\eta_2}{\kappa_{bhm}},1 \right\}$. If a trial step $s_k$ is accepted (a successful iteration occurs), then the improvement in $f$ is bounded below as follows
\begin{eqnarray}\label{eqn.lemma.4}
f(x_{k+1})-f(x_k)\le -C_2\delta_k^2,\end{eqnarray}
where $C_2 =\frac{1}{2}\eta_1\eta_2 \kappa_{fcd}\min \left\{ \frac{\eta_2}{\kappa_{bhm}},1 \right\}-2\epsilon_F>0$.
\end{lemma}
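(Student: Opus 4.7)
The plan is to chain together the successful-iteration conditions, the Cauchy decrease bound, and the estimate accuracy to recover the announced bound on the true objective decrease.

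First, I would unpack what ``successful iteration'' gives us. By construction $\rho_k \ge \eta_1$ and $\|g_k\|\ge \eta_2\delta_k$. Rearranging the definition of $\rho_k$, this yields
\[
f_k^0 - f_k^s \;\ge\; \eta_1\bigl(m_k(x_k)-m_k(x_k+s_k)\bigr).
\]
Next, I would lower-bound the model decrease. Assumption~\ref{asmpCS} and Assumption~\ref{asmpH} give
\[
m_k(x_k)-m_k(x_k+s_k)\;\ge\;\tfrac{\kappa_{fcd}}{2}\|g_k\|\min\!\left\{\tfrac{\|g_k\|}{\kappa_{bhm}},\delta_k\right\}.
\]
Using $\|g_k\|\ge \eta_2\delta_k$ in both factors, the right-hand side is at least $\tfrac{\kappa_{fcd}\eta_2}{2}\min\{\eta_2/\kappa_{bhm},1\}\,\delta_k^{2}$.

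Second, I would translate estimate decrease into true decrease. Because $\{f_k^0,f_k^s\}$ are $\epsilon_F$-accurate, we have $f(x_k)\le f_k^0+\epsilon_F\delta_k^{2}$ and $f(x_k+s_k)\le f_k^s+\epsilon_F\delta_k^{2}$, which combine to give
\[
f(x_{k+1})-f(x_k)\;\le\;-(f_k^0-f_k^s)+2\epsilon_F\delta_k^{2}.
\]
Substituting the lower bound on $f_k^0-f_k^s$ obtained above yields
\[
f(x_{k+1})-f(x_k)\;\le\;-\left(\tfrac{\eta_1\eta_2\kappa_{fcd}}{2}\min\!\left\{\tfrac{\eta_2}{\kappa_{bhm}},1\right\}-2\epsilon_F\right)\delta_k^{2},
\]
which is exactly $-C_2\delta_k^{2}$. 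Finally, the assumed bound $\epsilon_F<\tfrac{1}{4}\eta_1\eta_2\kappa_{fcd}\min\{\eta_2/\kappa_{bhm},1\}$ gives $2\epsilon_F<\tfrac{1}{2}\eta_1\eta_2\kappa_{fcd}\min\{\eta_2/\kappa_{bhm},1\}$, hence $C_2>0$.

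There is no real obstacle here; the only subtlety is making sure one exploits \emph{both} parts of the successful-iteration criterion. In particular, it is the condition $\|g_k\|\ge\eta_2\delta_k$ that lets the Cauchy decrease be bounded by $\delta_k^{2}$ rather than by $\|g_k\|\delta_k$, which is crucial because the estimate error also scales like $\delta_k^{2}$ and we need the two $\delta_k^{2}$ terms to combine cleanly.
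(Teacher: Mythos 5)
Your proposal is correct and follows essentially the same route as the paper's own proof: use $\rho_k\ge\eta_1$ together with the Cauchy decrease and $\|g_k\|\ge\eta_2\delta_k$ to bound $f_k^0-f_k^s$ below by $\tfrac{1}{2}\eta_1\eta_2\kappa_{fcd}\min\{\eta_2/\kappa_{bhm},1\}\delta_k^2$, then pass to the true function values via the $\epsilon_F$-accuracy, paying $2\epsilon_F\delta_k^2$. Your closing remark about needing $\|g_k\|\ge\eta_2\delta_k$ to turn the model decrease into a $\delta_k^2$ quantity is exactly the point the paper's argument relies on.
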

\begin{proof} An iteration being successful indicates that $\|g_k\|\ge\eta_2 \d_k$ and $\rho\ge \eta_1$. Thus,
\begin{eqnarray*}
f_k^0-f_k^s &\ge&\eta_1(m_k(x_k)-m_k(x_k+s_k))\\
&\ge&\eta_1 \frac{\kappa_{fcd}}{2} \| g_k \| \min \left\{ \frac{\| g_k\| }{\| H_k \|} ,\delta_k\right\}\\
&\ge& \frac{1}{2} \eta_1\eta_2 \kappa_{fcd}\min \left\{ \frac{\eta_2}{\kappa_{bhm}},1 \right\}\delta_k^2.
\end{eqnarray*}
Then, since the estimates are $\epsilon_F$-accurate, we have that the improvement in $f$ can be bounded as
\begin{eqnarray*}
f(x_k+s_k)-f(x_k)= f(x_k+s_k)-f_k^s+f_k^s-f_k^0+f_k^0-f(x_k)
\le  -C_2\delta_k^2,
\end{eqnarray*}
where $C_2 =\frac{1}{2}\eta_1\eta_2 \kappa_{fcd}\min \left\{ \frac{\eta_2}{\kappa_{bhm}},1 \right\}-2\epsilon_F>0$.
\hfill\end{proof}

To prove convergence of Algorithm \ref{algo:stodfosimple}  we will need to assume that   models $\{M_k\}$ and estimates $\{F_k^0, F_k^s\}$ are sufficiently accurate with sufficiently high probability.

\begin{assumption}\label{ass:modelsandestimates} Given values of $\alpha,\beta\in (0,1)$ and $\epsilon_F>0$, there exist $\kappa_{eg}$ and $\kappa_{ef}$ such that the 
the sequence of models $\{M_k\}$ and estimates $\{F_k^0, F_k^s\}$ generated by Algorithm \ref{algo:stodfosimple} are, respectively, $\alpha$-probabilistically 
$(\kappa_{ef}, \kappa_{eg})$- fully-linear and $\beta$-probabilistically $\epsilon_F$-accurate. 
\end{assumption}

\begin{remark} Note that this assumption is a statement about the existence of constants $\kappa=(\kappa_{ef},\kappa_{eg})$ given an $\alpha$, $\beta$ and $\epsilon_F$ - we will determine exact conditions on $\alpha$, $\beta$ and $\epsilon_F$ in Theorem \ref{lemma.delta.gotozero} and Lemma \ref{th:constants} below.
 \end{remark}

The following theorem states that the trust-region radius converges to zero with probability $1$.

\begin{theorem}\label{lemma.delta.gotozero}
Let Assumptions  \ref{asmpf} and \ref{asmpH}  be satisfied and assume that  in Algorithm \ref{algo:stodfosimple} the following holds. 
\begin{itemize}
\item The  step acceptance parameter $\eta_2$ is chosen so that  
\begin{eqnarray}
\eta_2&\geq&\max \left\{ \kappa_{bhm}, \frac{8\kappa_{ef}}{\kappa_{fcd}(1-\eta_1)}  \right\}.\label{eq:cond_eta2}\end{eqnarray}
\item
The accuracy parameter of the estimates satisfies
\begin{eqnarray}
\epsilon_F&\leq&\min \left\{ \kappa_{ef}, \frac{1}{8}\eta_1\eta_2 \kappa_{fcd} \right\}. \label{eq:cond_epsF}
\end{eqnarray}
\end{itemize}

Then $\alpha$ and $\beta$ can be chosen so that, if 
Assumption \ref{ass:modelsandestimates} holds for these values, then the sequence of trust-region radii, $\{\Delta_k\}$, generated by Algorithm \ref{algo:stodfosimple} satisfies 
\begin{equation}\label{eq:sumdeltasquares}
\sum_{k=0}^\infty \Delta^2_k<\infty
\end{equation}
almost surely.
\end{theorem}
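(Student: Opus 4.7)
The plan is to construct a nonnegative Lyapunov function
\[
\Phi_k := \nu\,(f(X_k) - f_{\min}) + (1-\nu)\Delta_k^2,
\]
where $f_{\min}$ is the lower bound on $f$ from Assumption \ref{asmpf} and $\nu \in (0,1)$ is a weight to be chosen, and to establish the drift inequality
\[
\mathbb{E}[\Phi_{k+1} - \Phi_k \mid \mathcal{F}_{k-1}^{M\cdot F}] \le -\theta\,\Delta_k^2
\]
for some constant $\theta > 0$. Once this holds, $V_k := \Phi_k + \theta\sum_{j=0}^{k-1}\Delta_j^2$ is a nonnegative supermartingale and $G_k := -V_k$ is a submartingale whose increments are bounded above by a finite constant (because $f$ is Lipschitz on ${\cal L}_{\mathrm{enl}}(x_0)$ and $\Delta_k \leq \Delta_{\max}$). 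Since $G_k \le 0$ for all $k$, Theorem \ref{submartingale} rules out $\limsup G_k = \infty$, so $V_k$ converges almost surely to a finite limit, which in turn gives $\sum_{k=0}^{\infty} \Delta_k^2 \le V_\infty/\theta < \infty$ almost surely.

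To establish the drift I would condition on $\mathcal{F}_{k-1}^{M\cdot F}$ (fixing $X_k$ and $\Delta_k$) and decompose $\Omega$ according to the four events $I_k\cap J_k$, $I_k\cap J_k^c$, $I_k^c\cap J_k$, $I_k^c\cap J_k^c$. Iterated conditioning through $\mathcal{F}_{k-1/2}^{M\cdot F}$ together with Definitions \ref{def:pfl-models} and \ref{def:pa-estimates} yields $P(I_k^c\cap J_k^c\mid \mathcal{F}_{k-1}^{M\cdot F}) \le (1-\alpha)(1-\beta)$. On the three ``good'' events I argue: (i) on $J_k$, whether or not $I_k$ holds, any accepted step satisfies Lemma \ref{lemma.delta.4} (which needs only estimate accuracy and the hypothesis \eqref{eq:cond_epsF} on $\epsilon_F$) and gives $f(X_{k+1})-f(X_k) \le -C_2\Delta_k^2$; (ii) on $I_k\cap J_k^c$, the full-linearity bound $|f-m_k|\le\kappa_{ef}\Delta_k^2$ combined with the Cauchy decrease \eqref{eqn:CS} and the condition $\eta_2 \ge 8\kappa_{ef}/(\kappa_{fcd}(1-\eta_1))$ from \eqref{eq:cond_eta2} forces a true $f$-decrease proportional to $\Delta_k^2$ on every accepted step, even though the acceptance was based on untrustworthy estimates; (iii) an unsuccessful step in any of these three events drops $\Phi$ by exactly $(1-\nu)(1-\gamma^{-2})\Delta_k^2$. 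Choosing $\nu$ so that $\nu\,C_2 > (1-\nu)(\gamma^2-1)$ then yields a uniform $-\theta_g\Delta_k^2$ bound on the three good events.

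The delicate case is $I_k^c\cap J_k^c$: if the step is accepted the only a priori bound available is the Lipschitz estimate $f(X_{k+1})-f(X_k) \le L\Delta_k$, which produces a potential $\Phi$-increase of order $\nu L\Delta_k + (1-\nu)(\gamma^2-1)\Delta_k^2$ whose leading term is \emph{linear} rather than quadratic in $\Delta_k$. This mismatch is the main technical obstacle. The plan is to push $\alpha$ and $\beta$ (and tune $\nu$) so that the expected contribution
\[
(1-\alpha)(1-\beta)\bigl[\nu L\Delta_k + (1-\nu)(\gamma^2-1)\Delta_k^2\bigr]
\]
is dominated by the $-\theta_g\Delta_k^2$ margin coming from the other three events; the linear-in-$\Delta_k$ piece must be absorbed carefully, for instance by bounding $\Delta_k \le \Delta_{\max}$ inside a sub-case split on whether $\|g_k\| \ge \eta_2\Delta_k$ is active, or by Young-type inequalities that trade a fraction of the good-event margin for the linear term. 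Pinning down the resulting quantitative lower bounds on $\alpha$ and $\beta$ is where I expect the bulk of the arithmetic to live, and this is precisely what Assumption \ref{ass:modelsandestimates} together with the constants referenced in Lemma \ref{th:constants} are designed to certify.
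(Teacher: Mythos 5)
Your Lyapunov function, your drift target, and your treatment of the three events where at least one of $I_k$, $J_k$ holds all match the paper's argument (your claims (i) and (ii) are correct and correspond to the paper's cases 1(b)--(c) and 2(a)--(c)). The genuine gap is exactly where you flagged it: the doubly-bad event $I_k^c\cap J_k^c$. None of the devices you propose can close it. The worst-case increase there is of order $\nu\,\|\nabla f(X_k)\|\,\Delta_k$, and when $\|\nabla f(X_k)\|\gg\Delta_k$ this is not $O(\Delta_k^2)$; dominating $(1-\alpha)(1-\beta)\,\nu L\,\Delta_k$ by a margin $\theta_g\Delta_k^2$ would force $(1-\alpha)(1-\beta)\lesssim \Delta_k$, which is impossible with the \emph{fixed} probabilities that the whole point of the theorem is to allow. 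Bounding $\Delta_k\le\Delta_{\max}$ only makes the offending term a constant, and a Young-type split $ab\le \epsilon a^2+b^2/(4\epsilon)$ just reintroduces a term that is not quadratic in $\Delta_k$. A sub-case split on $\|g_k\|\ge\eta_2\Delta_k$ also cannot help, because on $I_k^c$ the model gradient $g_k$ carries no information about the true function.

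The missing idea is the paper's case split on the \emph{true} gradient: $\|\nabla f(x_k)\|\ge\zeta\delta_k$ versus $\|\nabla f(x_k)\|<\zeta\delta_k$ with $\zeta\ge\kappa_{eg}+\max\{\eta_2,\,8\kappa_{ef}/(\kappa_{fcd}(1-\eta_1))\}$. In the small-gradient regime the Taylor/Lipschitz bound on the bad-event increase becomes $\|\nabla f(x_k)\|\delta_k+\tfrac12 L\delta_k^2< C_3\zeta\delta_k^2$, i.e.\ genuinely quadratic, so your fixed-probability balancing works there (this is condition \eqref{eq:alpha.beta.condition.2}). In the large-gradient regime the increase really is linear in $\delta_k$, but it is proportional to $\|\nabla f(x_k)\|\delta_k$, and the compensating mechanism is that on $I_k\cap J_k$ Lemmas \ref{lemma.delta.2} and \ref{lemma.delta.3} together guarantee the iteration is \emph{successful} and delivers a true decrease $-C_1\|\nabla f(x_k)\|\delta_k$ of the \emph{same} order; the balance is then $\alpha\beta C_1\ge (1-\alpha)(1-\beta)C_3+{}$slack, which is condition \eqref{eq:alpha.beta.condition.1} and is achievable with constant $\alpha,\beta$. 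Your outline never invokes Lemma \ref{lemma.delta.2} or Lemma \ref{lemma.delta.3}, and your uniform $-\theta_g\Delta_k^2$ bound on the good events is too weak to pay for the $\|\nabla f(X_k)\|\Delta_k$-sized loss; without the gradient-norm dichotomy the drift inequality cannot be established. (Your final step, passing from the drift inequality to $\sum_k\Delta_k^2<\infty$ via a supermartingale, is fine; the paper does the same thing by summing and taking expectations using that $\Phi_k$ is bounded below.)
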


\begin{proof}
We base our proof on properties of the  random function $\Phi_k =\nu f(X_{k})+(1-\nu)\Delta_{k}^2 $, where $\nu \in (0,1)$ is a fixed constant, which is specified below. A  similar function is used in the analysis   in  \cite{larson2013stochastic}, but the analysis itself is different. The overall goal is to show that there exists a constant $\sigma>0$ such that for all $k$
\begin{equation}\label{eq:expectationphi}
E[\Phi_{k+1}-\Phi_{k}|\mathcal{F}_{k-1}^{M\cdot F} ]\le -\sigma \Delta_k^2< 0. 
\end{equation}
Since $f$ is bounded from below and $\Delta_k>0$, we have that $\Phi_{k}$ is bounded from below for all $k$ and hence if \eqref{eq:expectationphi} holds on every iteration, then by summing \eqref{eq:expectationphi} over $k\in(1,\infty)$ and taking expectations on both sides we can conclude that  \eqref{eq:sumdeltasquares} holds with probability $1$. 
Hence, to prove the theorem we need to show that  \eqref{eq:expectationphi} holds on each iteration. 
\vskip0.1cm

Let us pick some  constant  $\zeta$ which satisfies
\begin{equation}\label{eq:zeta}
\zeta\ge \kappa_{eg}+\max\left\{  \eta_2,  \frac{8\kappa_{ef}}{\kappa_{fcd}(1-\eta_1)}\right\}.
\end{equation}
We now consider two possible cases:  $\| \nabla f(x_k)  \| \ge \zeta \delta_k $ and $\| \nabla f(x_k)  \| < \zeta \delta_k$.
We will show that \eqref{eq:expectationphi}
holds in both cases and hence it holds on every iteration. Given $\zeta$ we now select $\nu\in (0,1)$ such that
\begin{eqnarray}\label{eq:nu}
\frac{\nu}{1-\nu}&>& \max \left\{ \frac{4\gamma^2}{\zeta C_1} , \frac{4\gamma^2}{ \eta_1\eta_2\kappa_{fcd}}, \frac{\gamma^2}{\kappa_{ef}} \right\},
\end{eqnarray}
with $C_1$ defined as in Lemma \ref{lemma.delta.2}.

As usual, let $x_k$, $\delta_k$, $s_k$, $g_k$, and $\phi_k$ denote realizations of random quantities $X_k$, $\Delta_k$, $S_k$, $G_k$, and  $\Phi_k$, respectively.

Let us consider some realization of Algorithm  \ref{algo:stodfosimple}. Note that on all successful iterations, $x_{k+1}=x_k+s_k$ and $\delta_{k+1}=\min\{\gamma \delta_k, \delta_{max}\}$ with $\gamma>1$, hence
\begin{eqnarray} 
\phi_{k+1}-\phi_k\leq\nu (f(x_{k+1})-f(x_k))+(1-\nu)(\gamma^2-1)\delta_k^2.\label{eqn.suc.phi}
\end{eqnarray} 
On  all unsuccessful iterations, $x_{k+1}=x_k$ and $\delta_{k+1}=\frac{1}{\gamma} \delta_k$, i.e. 
\begin{eqnarray}
\phi_{k+1}-\phi_k=(1-\nu)(\frac{1}{\gamma^2}-1)\delta_k^2\equiv b_1<0.\label{eqn.unsuc.phi}
\end{eqnarray}

For each iteration and each of the two cases we consider,  we will  analyze the four possible combined outcomes of the events $I_k$ and $J_k$ as defined in \eqref{eq:Ik} and \eqref{eq:Jk}, respectively. 

Before presenting the formal proof let us outline the key ideas. We will show that, unless both the model and the estimates are bad on iteration $k$, we select $\nu\in (0,1)$ sufficiently close to $1$, so that the
 decrease in $\phi_k$ on a successful iteration is greater than the decrease on an unsuccessful iteration (which is equal to $b_1$, according to \eqref{eqn.unsuc.phi}). When the model and the estimates are both bad,
 an increase in $\phi_k$ may occur. This increase is bounded by a value proportional to $\delta_k^2$ when $\|\nabla f(x_k)\| < \zeta\delta_k$. When $\| \nabla f(x_k)  \| \ge \zeta \delta_k$, though, the increase in $\phi_k$ may be proportional to 
 $\|\nabla f(x_k)\|\delta_k$. However, since iterations with good models and good estimates will provide \emph{decrease} in $\phi_k$ also proportional to $\|\nabla f(x_k)\|\delta_k$, by choosing values of $\alpha$ and $\beta$ close enough to 1, we can ensure that in expectation $\phi_k$ decreases. 
 
 We now present the proof. 
 
\paragraph{Case 1: $\| \nabla f(x_k)  \| \ge \zeta \delta_k $.} 

\begin{itemize}
\item[a.] $I_k$ and $J_k$ are both true, i.e., both the  model and the  estimates are good on iteration $k$.  
From the definition of $\zeta$, we know
\[
 \|\nabla f(x_k)\|\ge \left(\kappa_{eg}+ \max\left\{  \eta_2,  \frac{8\kappa_{ef}}{\kappa_{fcd}(1-\eta_1)}\right\}\right)\delta_k.
 \]
Then since the model $m_k$ is $\kappa$-fully linear and, from $\eta_2>\kappa_{bhm}$, $\e_F\le \kappa_{ef}$
 and $0<\eta_1<1$, it is easy to show that  the condition (\ref{condition.lemma.delta.2}) in Lemma \ref{lemma.delta.2} holds. Therefore, the trial step $s_k$ leads to a decrease in $f$ as in (\ref{eqn.true.decrease}).

Moreover, since 
$$\|g_k\|\ge \|\nabla f(x_k)\|-\kappa_{eg}\d_k\ge (\zeta-\kappa_{eg})\d_k\ge \max \left\{\eta_2, \frac{8\kappa_{ef}}{\kappa_{fcd}(1-\eta_1)}  \right\}\d_k,$$
 and the estimates $\{ f_k^0,f_k^s \}$ are $\e_F$-accurate, with $\e_F\le \kappa_{ef}$, the condition (\ref{condition.lemma.delta.3}) in Lemma \ref{lemma.delta.3} holds. Hence, iteration $k$ is successful, i.e. $x_{k+1}=x_k+s_k$ and $\d_{k+1}=\gamma\d_k$. 

Combining (\ref{eqn.true.decrease}) and (\ref{eqn.suc.phi}),  we get
\begin{eqnarray}
\phi_{k+1}-\phi_k\le -\nu C_1\| \nabla f(x_k)\|\delta_k +(1-\nu)(\gamma^2-1)\delta_k^2\equiv b_2,\label{eqn.suc.phi.truedecrease1}
\end{eqnarray}
with $C_1$ defined in Lemma \ref{lemma.delta.2}.  Since  $\|\nabla f(x_k)  \| \ge \zeta \delta_k$ we have 
\begin{eqnarray}
b_2\leq [-\nu C_1 \zeta+(1-\nu)(\gamma^2-1)]\delta_k^2<0,\label{eq:truedecneg}
\end{eqnarray}
for $\nu\in(0,1)$ satisfying \eqref{eq:nu}. 


\item[b.] $I_k$ is true and  $J_k$ is false, i.e., we have a good model and bad estimates  on iteration  $k$.

In this case, Lemma \ref{lemma.delta.2} still holds, that is $s_k$ yields a sufficient decrease in $f$, hence, if the iteration is successful, we obtain \eqref{eqn.suc.phi.truedecrease1} and \eqref{eq:truedecneg}. However, the step can be erroneously  rejected, because of inaccurate probabilistic estimates, in which case  we have an unsuccessful iteration and (\ref{eqn.unsuc.phi}) holds. Since  \eqref{eq:nu} holds the right hand side of the first relation in (\ref{eq:truedecneg}) is strictly smaller than the right hand side of the first relation in (\ref{eqn.unsuc.phi}) and therefore, \eqref{eqn.unsuc.phi} holds whether the iteration is successful or not. 

\item[c.] $I_k$ is false and  $J_k$ is true, i.e., we have a bad model and good estimates  on iteration  $k$. 
 In this case, iteration $k$ can be either successful or unsuccessful.  In the unsuccessful case (\ref{eqn.unsuc.phi}) holds.  When the iteration is successful, since the estimates are $\e_F$-accurate and \eqref{eq:cond_epsF} 
 holds  then by Lemma \ref{lemma.delta.4} (\ref{eqn.lemma.4}) holds with $C_2\ge \frac{1}{4}\eta_1\eta_2\kappa_{fcd}$. Hence, 
 in this case we have 
\begin{eqnarray}
\phi_{k+1}-\phi_k\le [-\nu C_2+(1-\nu)(\gamma^2-1)]\delta_k^2. \label{eqn.suc.phi.truedecrease2}
\end{eqnarray}

Again, due to  the choice of $\nu$  satisfying  \eqref{eq:nu}  we have that, as in case (b),  \eqref{eqn.unsuc.phi} holds whether the iteration is successful or not. 

\item[d.] $I_k$ and $J_k$ are both false, i.e., both the  model and the  estimates are bad on iteration $k$.

Inaccurate estimates can cause the algorithm to accept a bad step, which may lead to an increase both in $f$ and in $\delta_k$. Hence in this case $\phi_{k+1}-\phi_{k}$ may  be positive. However, combining the Taylor expansion of $f(x_k)$ at $x_k+s_k$ and the Lipschitz continuity of $\nabla f(x)$ we can bound the amount of increase  in  $f$, hence bounding $\phi_{k+1}-\phi_{k}$ from above. By adjusting the probability of outcome (d) to be sufficiently small, we can  ensure that in expectation $\Phi_k$ is sufficiently reduced.  

In particular,  from Taylor's Theorem and Lipschitz continuity of $\nabla f(x)$ we have, respectively, 
\begin{eqnarray*}
f(x_k)-f(x_k+s_k)&\ge&\nabla f(x_k+s_k)^T(-s_k)-\dfrac{1}{2}L\delta_k^2,\mbox{ and }\\
\| \nabla f(x_k+s_k)-\nabla f(x_k)   \|&\le& Ls_k\le L\delta_k.
\end{eqnarray*}
From this we can derive  that any  increase of $f(x_k)$   is bounded by
\begin{eqnarray*}
 f(x_k+s_k)-f(x_k)\le C_3 \|\nabla f(x_k)\|\delta_k,
 \end{eqnarray*}
where $C_3=1+\frac{3L}{2\zeta}$.
Hence, the change in function $\phi$ is bounded: 
\begin{eqnarray}
\phi_{k+1}-\phi_k\le \nu C_3 \|\nabla f(x_k)\| \delta_k+(1-\nu)(\gamma^2-1)\delta_k^2\equiv b_3.\label{eqn.suc.phi.falseincrease}
\end{eqnarray}
\end{itemize}
\vskip0.5cm

Now we are ready to take the expectation of $\Phi_{k+1}-\Phi_{k}$ for the case when $\|\nabla f(X_k)  \| \geq \zeta \Delta_k$. We know that case (a) occurs with a probability at least $\alpha\beta$ (conditioned on the past) and in that case $\phi_{k+1}-\phi_{k}=b_2<0$ with $b_2$ defined
in  \eqref{eqn.suc.phi.truedecrease1}. Case (d) occurs with probability at most $(1-\alpha)(1-\beta)$ and that case $\phi_{k+1}-\phi_{k}$ is bounded from above by  $b_3>0$. Cases (b) and (c) occur otherwise
and in those cases $\phi_{k+1}-\phi_{k}$ is bounded from above by $b_1<0$, with $b_1$ defined in (\ref{eqn.unsuc.phi}). Finally we note that $b_1>b_2$ due to our choice of $\nu$.

Hence,  we can combine (\ref{eqn.unsuc.phi}), (\ref{eqn.suc.phi.truedecrease1}), (\ref{eqn.suc.phi.truedecrease2}), and (\ref{eqn.suc.phi.falseincrease}), and use $B_1$, $B_2$, and $B_3$ as random counterparts of $b_1$, $b_2$, and $b_3$,  to obtain the following bound
\begin{eqnarray*}
&&E[\Phi_{k+1}-\Phi_{k}|\mathcal{F}_{k-1}^{M\cdot F},   \{\| \nabla f(X_k)  \| \geq \zeta \Delta_k\}] \\
&\le&\alpha\beta B_2 + [\alpha(1-\beta)+(1-\alpha)\beta]B_1 + (1-\alpha)(1-\beta)B_3\\
&=&\alpha\beta[-\nu C_1\| \nabla f(X_k)\|\Delta_k +(1-\nu)(\gamma^2-1)\Delta_k^2 ]\\
&&+[\alpha(1-\beta)+(1-\alpha)\beta](1-\nu)(\frac{1}{\gamma^2}-1)\Delta_k^2 \\
&&+ (1-\alpha)(1-\beta)[\nu C_3 \|\nabla f(X_k)\| \Delta_k+(1-\nu)(\gamma^2-1)\Delta_k^2].
\end{eqnarray*}
Rearranging the terms we obtain 
\begin{eqnarray*}
&&E[\Phi_{k+1}-\Phi_{k}|\mathcal{F}_{k-1}^{M\cdot F},  \{\| \nabla f(X_k)  \| \geq \zeta \Delta_k\}] \\
&\le&[-\nu C_1\alpha\beta + (1-\alpha)(1-\beta)\nu C_3] \| \nabla f(X_k)\|\Delta_k \\
&&+[\alpha\beta - \frac{1}{\gamma^2}(\alpha(1-\beta)+(1-\alpha)\beta) +(1-\alpha)(1-\beta)](1-\nu)(\gamma^2-1)\Delta_k^2\\
&\le& [- C_1\alpha\beta + (1-\alpha)(1-\beta) C_3] \nu \|\nabla f(X_k)\|\Delta_k+ (1-\nu)(\gamma^2-1)\Delta_k^2, 
\end{eqnarray*}
where the last inequality holds because $\alpha\beta - \frac{1}{\gamma^2}(\alpha(1-\beta)+(1-\alpha)\beta) +(1-\alpha)(1-\beta)\le[\alpha+(1-\alpha)][(\beta+(1-\beta)]=1.$

Let us choose $0<\alpha\leq 1$ and  $0<\beta\leq 1$ so that they satisfy 
\begin{eqnarray}\label{eq:alpha.beta.condition.1}
\frac{(\alpha\beta-\frac{1}{2})}{(1-\alpha)(1-\beta)}&\ge& \frac{ C_3 }{C_1}
\end{eqnarray}
which implies 
\begin{eqnarray*}
[C_1\alpha\beta - (1-\alpha)(1-\beta) C_3]\geq \frac{1}{2}C_1\geq 2\frac{(1-\nu)(\gamma^2-1)}{\nu \zeta},
\end{eqnarray*}
where the last inequality is the result of \eqref{eq:nu}. It is important to note that the quantity $\frac{1}{2}$ in the numerator of \eqref{eq:alpha.beta.condition.1}
is chosen so that the first inequality of the above expression holds. This quantity can be made arbitrarily small, and with appropriate adjustment to \eqref{eq:nu}
one can have 
\begin{eqnarray*}
[C_1\alpha\beta - (1-\alpha)(1-\beta) C_3]\geq \theta_1C_1\geq \theta_2\frac{(1-\nu)(\gamma^2-1)}{\nu \zeta},
\end{eqnarray*}
where $\theta_1$ is positive and arbitrarily close to zero and $\theta_2>1$ and arbitrarily close to one. We will return to this comment in a later remark, but for the sake of simplicity, we choose $\theta_1=\frac{1}{2}$  and $\theta_2=2$.

Recall that  $\|\nabla f(X_k)\|\geq \zeta \Delta_k$, hence 
\begin{eqnarray*}
& &[- C_1\alpha\beta + (1-\alpha)(1-\beta) C_3] \nu \|\nabla f(X_k)\|\Delta_k+ (1-\nu)(\gamma^2-1)\Delta_k^2\\
& \le& \frac{1}{2}[- C_1\alpha\beta + (1-\alpha)(1-\beta) C_3] \nu \|\nabla f(X_k)\|\Delta_k\le  -\frac{1}{4} C_1 \nu \|\nabla f(X_k)\|\Delta_k
\end{eqnarray*}

In summary, we have 
\begin{equation}\label{eq:expectPhi1}
E[\Phi_{k+1}-\Phi_{k}|\mathcal{F}_{k-1}^{M\cdot F},  \{\| \nabla f(X_k)  \| \geq \zeta \Delta_k\}] \le - \frac{1}{4} C_1\nu \|\nabla f(X_k)\| \Delta_k
\end{equation}
and 
\begin{equation}\label{eq:final.decrease.in.phi.1}
E[\Phi_{k+1}-\Phi_{k}|\mathcal{F}_{k-1}^{M\cdot F},  \{\| \nabla f(X_k)  \| \geq \zeta \Delta_k\}] \le -\frac{1}{2}
(1-\nu)(\gamma^2-1)\Delta_k^2
\end{equation}
For the purposes of this lemma and the liminf-type convergence result, which will follow,  bound \eqref{eq:final.decrease.in.phi.1} is sufficient. We will use bound \eqref{eq:expectPhi1} in the proof of the $\lim$-type convergence result. 


\paragraph{Case 2: Let us consider now the iterations when $\| \nabla f(x_k)  \| < \zeta \delta_k $.}
First we note that if  $\| g_k \| <\eta_2\delta_k$, then we have an unsuccessful step and \eqref{eqn.unsuc.phi}  holds. Hence, we now assume that $\| g_k \| \geq \eta_2\delta_k$ and again
 consider four possible outcomes.  We will show that in all situations, except when both the model and the estimates are bad,  \eqref{eqn.unsuc.phi}   holds. In the remaining case, because $\| \nabla f(x_k)  \| < \zeta \delta_k $
 the increase in $\phi_k$ can be bounded from above by a multiple of $\delta_k^2$. Hence by selecting appropriate values for probabilities $\alpha$ and $\beta$ we will be able to establish the bound on expected decrease in $\Phi_k$ as in Case 1. 
\begin{itemize}
\item[a.] 
$I_k$ and $J_k$ are both true, i.e., both the  model and the  estimates are good on iteration $k$.  

The iteration may or may not be successful, even though $I_k$ is true. On successful iteration good model ensures reduction in $f$. Applying the same argument as in the case 1(c) we  
establish  \eqref{eqn.unsuc.phi}. 

\item[b.]  $I_k$ is true and  $J_k$ is false, i.e., we have a good model and bad estimates  on iteration  $k$. 

On unsuccessful iterations, (\ref{eqn.unsuc.phi}) holds. On successful iterations, $\| g_k \| \ge\eta_2\delta_k$ and $\eta_2\geq \kappa_{bhm}$ imply that
\begin{eqnarray*}
m_k(x_k)-m_k(x_k+s_k)&\ge& \frac{\kappa_{fcd}}{2} \| g_k \| \min \left\{ \frac{\| g_k\| }{\| H_k \|} ,\delta_k\right\}\\
&\ge&  \eta_2\frac{\kappa_{fcd}}{2}\delta_k^2.
\end{eqnarray*}
Since $I_k$ is true,  the model is $\kappa$-fully-linear, and the  function decrease can be bounded as
\begin{eqnarray*}
&&f(x_k)-f(x_k+s_k)\nonumber\\
&=&f(x_k)-m_k(x_k)+m_k(x_k)-m_k(x_k+s_k)+m_k(x_k+s_k)-f(x_k+s_k)\nonumber\\
&\ge& (\eta_2\frac{\kappa_{fcd}}{2}-2\kappa_{ef})\delta_k^2 \ge \kappa_{ef}\delta_k^2
\end{eqnarray*} 
due to \eqref{eq:cond_eta2}. 

It follows that, if $k$-th iterate is successful, then
\begin{eqnarray}
\phi_{k+1}-\phi_k\le [-\nu \kappa_{ef}+(1-\nu)(\gamma^2-1)]\delta_k^2 .\label{eqn.suc.phi.truedecrease1.case2}
\end{eqnarray}

Again by choosing $\nu\in(0,1)$  so that \eqref{eq:nu} holds,
 we ensure that 
right hand side of (\ref{eqn.suc.phi.truedecrease1.case2}) is strictly smaller than that of (\ref{eqn.unsuc.phi}), hence (\ref{eqn.unsuc.phi}) holds, whether the iteration is successful or not. 

\textbf{Remark}: $\eta_2$ may need to be a relatively large constant to satisfy \eqref{eq:cond_eta2}. This is due to the fact that the model has to be sufficiently accurate to ensure decrease in the function if a step is taken, since the step is accepted based on poor estimates. Note that $\eta_2$ restricts the size of $\Delta_k$, which is used both as a bound on the step size and the control of the accuracy. In general it is possible to have two separate quantities
(related by a constant) - one to control the step size and another to control the accuracy. Hence, it is possible to modify our algorithm  to accept steps larger than $\|g_k\|/\eta_2$. This will make the algorithm more practical, but the analysis much more complex. In this paper, we choose to stay with the simplest version, but keep in mind that the condition \eqref{eq:nu}  is not terminally restrictive.

\item[c.] $I_k$ is false and  $J_k$ is true, i.e., we have a bad model and good estimates  on iteration  $k$. 

This case is analyzed identically to the case 1(c). 

\item[d.] $I_k$ and $J_k$ are both false, i.e., both the  model and the  estimates are bad on iteration $k$.

Here we bound the maximum possible increase in $\phi_k$ using the  Taylor expansion and the  Lipschitz continuity of $\nabla f(x)$.
\begin{eqnarray*}
 f(x_k+s_k)-f(x_k)\le \|\nabla f(x_k)\|\delta_k+\frac{1}{2}L\delta_k^2<C_3\zeta\delta_k^2.
 \end{eqnarray*}
 Hence, the change in function $\phi$ is 
\begin{eqnarray}
\phi_{k+1}-\phi_k\le[\nu C_3\zeta+(1-\nu)(\gamma^2-1)]\delta_k^2.\label{eqn.suc.phi.falseincrease.case2}
\end{eqnarray}

\end{itemize}

We are now ready to bound the expectation of $\phi_{k+1}-\phi_{k}$  as we did in Case 1, except that in Case 2 we simply combine  (\ref{eqn.suc.phi.falseincrease.case2}),
which holds with probability at most $(1-\alpha)(1-\beta)$ and  (\ref{eqn.unsuc.phi}) which holds  otherwise.
\begin{eqnarray*}
&&E[\Phi_{k+1}-\Phi_{k}|\mathcal{F}_{k-1}^{M\cdot F}, \{\| \nabla f(X_k)  \| < \zeta \Delta_k\} ]\\
&\le&[\alpha\beta+\alpha(1-\beta)+(1-\alpha)\beta](1-\nu)(\frac{1}{\gamma^2}-1)\Delta_k^2 \\
&&+ (1-\alpha)(1-\beta)[\nu C_3\zeta+(1-\nu)(\gamma^2-1)]\Delta_k^2\\
&\le &(1-\nu)(\frac{1}{\gamma^2}-1)\Delta_k^2+ (1-\alpha)(1-\beta)[\nu C_3\zeta+(1-\nu)(\gamma^2-\frac{1}{\gamma^2})]\Delta_k^2
\end{eqnarray*}
if we choose probabilities $0<\alpha\leq 1$ and $0<\beta\leq 1$  so that the following holds,
\begin{eqnarray}\label{eq:alpha.beta.condition.2}
(1-\alpha)(1-\beta) \le \frac{\gamma^2-1}{\gamma^4-1 +2\gamma^2C_3\zeta \cdot \frac{\nu}{1-\nu}},
\end{eqnarray}
then 
\begin{eqnarray}\label{eq:final.decrease.in.phi.2}
E[\Phi_{k+1}-\Phi_{k}|\mathcal{F}_{k-1}^{M\cdot F}, \{\| \nabla f(X_k)  \| < \zeta \Delta_k\} ]\leq -\frac{1}{2}(1-\nu)(\frac{1}{1-\gamma^2})\Delta_k^2.
\end{eqnarray}

In conclusion, combining (\ref{eq:final.decrease.in.phi.1}) and (\ref{eq:final.decrease.in.phi.2}), and noting that $1-\frac{1}{\gamma^2}<\gamma^2-1$ we have
\begin{eqnarray*}
&E[\Phi_{k+1}-\Phi_{k}|\mathcal{F}_{k-1}^{M\cdot F}\}] \le - \frac{1}{2}(1-\nu)(1-\frac{1}{\gamma^2}) \Delta_k^2<0,
\end{eqnarray*}
which implies,  that \eqref{eq:expectationphi} holds with
$\sigma= - \frac{1}{2}(1-\nu)(1-\frac{1}{\gamma^2}-1)<0$. This 
 concludes the proof of the  theorem. 

\hfill\end{proof}

To summarize the conditions on the probabilities involved in Theorem  \ref{lemma.delta.gotozero} to ensure that the theorem holds, we state the following additional lemma.

\begin{corollary}\label{th:constants} Let all assumptions of  Theorem  \ref{lemma.delta.gotozero}  hold. The statement of Theorem \ref{lemma.delta.gotozero} holds if the $\alpha$ and $\beta$  are chosen to satisfy the following conditions: 

\begin{equation} 
\label{eq:thm411a}
\frac{(\alpha\beta-\frac{1}{2})}{(1-\alpha)(1-\beta)}\ge   \frac{1+ \frac{3L}{2\zeta}}{C_1}
\end{equation}
and
 \begin{equation}
 \label{eq:thm411b}
(1-\alpha)(1-\beta) \le
\frac{\gamma^2-1}{\gamma^4-1 + \gamma^2\left(3L+2\zeta\right)\cdot \max \left\{ \frac{4}{\zeta C_1} , \frac{4}{ \eta_1\eta_2\kappa_{fcd}}, \frac{1}{\kappa_{ef}} \right\}},
\end{equation}
with
$C_1 =\frac{\kappa_{fcd}}{4}\cdot \max\left\{ \frac{\kappa_{bhm}}{\kappa_{bhm}+\kappa_{eg}},\frac{8\kappa_{ef}}{8\kappa_{ef}+\kappa_{fcd}\kappa_{eg}}\right\}$ and $\zeta=\kappa_{eg}+\eta_2.$
\end{corollary}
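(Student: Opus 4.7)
The proof of this corollary is essentially a bookkeeping exercise: the proof of Theorem~\ref{lemma.delta.gotozero} already exhibits two explicit conditions on $\alpha$ and $\beta$, namely \eqref{eq:alpha.beta.condition.1} and \eqref{eq:alpha.beta.condition.2}, which together are sufficient for the conclusion to hold (after an auxiliary constant $\nu$ has been chosen to satisfy \eqref{eq:nu}). My plan is therefore to translate these two conditions into the closed-form bounds \eqref{eq:thm411a} and \eqref{eq:thm411b} by substituting the explicit expressions for $C_3$ and $\nu$.

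First, condition \eqref{eq:thm411a} drops out immediately from \eqref{eq:alpha.beta.condition.1}: in the proof of Theorem~\ref{lemma.delta.gotozero} the constant $C_3$ was defined as $C_3 = 1 + \tfrac{3L}{2\zeta}$, and plugging this into the ratio $C_3/C_1$ gives exactly the right-hand side of \eqref{eq:thm411a}. No additional work is required here beyond identifying the relevant constant.

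Next, to obtain \eqref{eq:thm411b} from \eqref{eq:alpha.beta.condition.2}, I must commit to a specific choice of $\nu$ consistent with \eqref{eq:nu}. The natural choice is to take $\nu \in (0,1)$ so that $\tfrac{\nu}{1-\nu}$ is equal to (or infinitesimally larger than) the maximum on the right-hand side of \eqref{eq:nu}, i.e.\ $\tfrac{\nu}{1-\nu} = \gamma^2 \max\bigl\{\tfrac{4}{\zeta C_1},\tfrac{4}{\eta_1\eta_2\kappa_{fcd}},\tfrac{1}{\kappa_{ef}}\bigr\}$. Substituting this choice together with $C_3 = 1 + \tfrac{3L}{2\zeta}$ into the factor $2\gamma^2 C_3 \zeta \cdot \tfrac{\nu}{1-\nu}$ in the denominator of \eqref{eq:alpha.beta.condition.2}, and using $2C_3\zeta = 2\zeta + 3L$, collapses the denominator to the form $\gamma^4-1 + \gamma^2(3L+2\zeta)\cdot\max\{\cdots\}$ that appears in \eqref{eq:thm411b}. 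After this substitution, \eqref{eq:alpha.beta.condition.2} becomes exactly \eqref{eq:thm411b}.

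The main obstacle is not conceptual but notational: one has to be careful that the chosen $\nu$ genuinely satisfies the strict inequality in \eqref{eq:nu} (this can be handled by noting that all three conditions \eqref{eq:alpha.beta.condition.1}, \eqref{eq:alpha.beta.condition.2}, \eqref{eq:nu} are preserved under small perturbations of $\nu$ upward, so taking $\tfrac{\nu}{1-\nu}$ arbitrarily close to that maximum suffices), and that the resulting bound on $(1-\alpha)(1-\beta)$ in \eqref{eq:thm411b} is consistent with \eqref{eq:thm411a}. Since \eqref{eq:thm411a} forces $\alpha\beta > \tfrac{1}{2}$ while \eqref{eq:thm411b} forces $(1-\alpha)(1-\beta)$ to be small, both conditions pull $\alpha$ and $\beta$ toward $1$, and one easily verifies that values of $\alpha,\beta$ close enough to $1$ satisfy both. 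Once these substitutions and consistency checks are complete, invoking Theorem~\ref{lemma.delta.gotozero} directly yields the statement of the corollary.
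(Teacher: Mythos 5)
Your proposal matches the paper's proof, which is precisely the one-line observation that \eqref{eq:thm411a} and \eqref{eq:thm411b} arise from substituting $C_3=1+\frac{3L}{2\zeta}$ and the choice of $\frac{\nu}{1-\nu}$ dictated by \eqref{eq:nu} into \eqref{eq:alpha.beta.condition.1} and \eqref{eq:alpha.beta.condition.2}. One minor caveat: with $\frac{\nu}{1-\nu}=\gamma^2\max\{\cdots\}$ the relevant denominator term actually works out to $\gamma^4(3L+2\zeta)\max\{\cdots\}$ rather than $\gamma^2(3L+2\zeta)\max\{\cdots\}$, but this factor-of-$\gamma^2$ discrepancy is present in the paper's own statement of \eqref{eq:thm411b} as well, so it is not a defect of your argument relative to the original.
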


\begin{proof} 
The proof follows simply from combining expression for $C_3$ and condition  \eqref{eq:nu} with (\ref{eq:alpha.beta.condition.1}) and  (\ref{eq:alpha.beta.condition.2}). 

\end{proof}

Clearly, choosing $\alpha$ and $\beta$ sufficiently close to $1$ will satisfy this condition.

\begin{remark} We will briefly illustrate through a simple example how these algorithmic parameters scale with problem data. 

Recall that $L$ is the Lipschitz constant of the gradient of $f$ and of $f$ over ${\cal L}_{enl}(x^0)$. 
It is reasonable to expect that $\kappa_{ef}$ and $\kappa_{eg}$ are quantities that scale with $L$, since Taylor models satisfy this condition, as do polynomial interpolation and regression models based on well-poised data sets \cite{DFObook}. Let us assume for the sake of an example that
$\kappa_{ef}=\kappa_{eg}=10L$. The bound on model Hessians $\kappa_{bhm}$ can be chosen to be arbitrarily small, at the expense of limiting the class of models, however, it is clearly reasonable to choose it as something that scales with $L$, if this information is available. Let us assume that $\kappa_{bhm}=10L$, as well. In a standard trust region method, a common choice of algorithmic parameters would use $\kappa_{fcd}=\frac{1}{2}$, $\gamma=2$, and $\eta_1=\frac{1}{2}$. 

The reader can easily verify that with these parameter choices and previous assumptions, Lemma \ref{th:constants} states that we must choose $\eta_2\geq 32L$. The intermediate constants satisfy $\zeta\geq 42L$ and $C_1=\frac{2}{17}$. Without loss of generality, we will simply accept $\zeta=42L$.

From observing that, given the above values of the constants,
$$
\max \left\{ \frac{4}{\zeta C_1} , \frac{4}{ \eta_1\eta_2\kappa_{fcd}}, \frac{1}{\kappa_{ef}} \right\}\leq \frac{1}{L},
$$
we have
%
\begin{equation}
\label{pracbound1}
\frac{(\alpha\beta-\frac{1}{2})}{(1-\alpha)(1-\beta)} \geq 9
\end{equation}
and
\begin{equation}
\label{pracbound2}
(1-\alpha)(1-\beta)\leq \frac{1}{440}
\end{equation}
We note that as $\eta_1$ and $\kappa_{fcd}$ constants are driven closer to $1$, the constant $440$ can be reduced by up to a factor of $4$.  

Supposing that $\kappa_{ef},\kappa_{eg}$, and $\kappa_{bhm}$ scale linearly with $L$, then $\eta_2$, $\epsilon_F$, and the expressions relating to $\alpha$ and $\beta$ in Corollary \ref{th:constants} are all  functions in $L$ satisfying
\begin{equation}
\eta_2\geq\Theta(L),
\end{equation}
\begin{equation}
\epsilon_F\leq\Theta(L),
\end{equation}
\begin{equation} 
\frac{\alpha\beta}{(1-\alpha)(1-\beta)}\geq\Theta(1),
\end{equation}
 and
 \begin{equation}
(1-\alpha)(1-\beta)\leq \Theta({1}),
\end{equation}
where we use the notation $\Theta(\cdot)$ to indicate the $O(\cdot)$ relationship with moderate constants.

\end{remark}

\begin{remark}
Recall our remark made earlier in Theorem \ref{lemma.delta.gotozero}, case 2b, on how $\eta_2$ bounds our step sizes. Indeed, if $\eta_2\geq\Theta(L)$ has to be imposed
this may force the algorithm to take small step sizes throughout. However, as mentioned earlier,  the analysis of Theorem \ref{lemma.delta.gotozero} can be modified
by introducing a tradeoff between the size of $\eta_2$ and the accuracy parameters $\epsilon_F$ and $\kappa_{ef}$ (as both of these constant parameters can be made smaller). It also may be advantageous to choose $\eta_2$ dynamically. Exploring this  is a subject for future work. In the practical implementations that we will discuss in Section 6, we do not  make use of the algorithmic parameter $\eta_2$ at all, and so even though $\eta_2$ is effectively arbitrarily close to $0$, the algorithm still works. 
\end{remark}

\begin{remark} Note that if $\beta=0$, then $\Delta_k\to 0$ for any positive  value of $\alpha$, which is the case shown in \cite{Bandeiraetal2013}, 
since, as we pointed earlier, condition \eqref{eq:alpha.beta.condition.1}
can be restated with $\frac{1}{2}$ replaced by an arbitrary small positive value.\end{remark}
\subsection{The liminf-type convergence}
We are ready to prove a liminf-type first-order convergence result, i.e., that a subsequence of the iterates drive the gradient of the objective function to zero. The proof follows closely that in \cite{Bandeiraetal2013}, the key difference being the assumption on the function estimates that are needed to ensure that a good step gets accepted by Algorithm 1. 

\begin{theorem} \label{th:convergenceliminf} 
 Let the assumptions of Theorem  \ref{lemma.delta.gotozero} and   Lemma \ref{th:constants} hold. Suppose additionally that $\alpha\beta\geq\frac{1}{2}$. Then the sequence of random iterates generated by Algorithm \ref{algo:stodfosimple}, $\{X_k\}$, almost surely satisfies $$ \underset{k\to \infty}{\lim \inf} \| \nabla f(X_k)  \|=0.$$
\end{theorem}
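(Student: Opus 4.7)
The plan is to argue by contradiction, combining the fact that $\Delta_k\to 0$ almost surely (a consequence of Theorem~\ref{lemma.delta.gotozero}, since $\sum_k\Delta_k^2<\infty$ a.s.) with a random walk argument driven by the submartingale result in Theorem~\ref{submartingale}. First I would suppose the conclusion fails, so that there exist $\epsilon_0>0$ and a measurable event $E$ with $P(E)>0$ on which $\|\nabla f(X_k)\|\ge\epsilon_0$ for all $k$ larger than some random index $K(\omega)$. On $E$, because $\Delta_k\to 0$, there is some $K^*(\omega)\ge K(\omega)$ such that for every $k\ge K^*$ one has $\|\nabla f(X_k)\|\ge\epsilon_0\ge \zeta\Delta_k$ and also $\gamma\Delta_k<\Delta_{\max}$ (so the $\Delta_{\max}$ cap is inactive).

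The key random walk is built from the ``good iteration'' indicator $R_k:=\mathbf 1_{I_k\cap J_k}$. Using Definitions~\ref{def:pfl-models} and \ref{def:pa-estimates} and iterated conditioning on $\mathcal{F}^{M\cdot F}_{k-1}$ and $\mathcal{F}^{M\cdot F}_{k-1/2}$, one gets $P(R_k=1\mid \mathcal{F}^{M\cdot F}_{k-1})\ge\alpha\beta\ge\tfrac12$. Set $W_k:=\sum_{j=0}^{k-1}(2R_j-1)$. Then $W_k$ is a submartingale with bounded increments $|W_{k+1}-W_k|=1$, so by Theorem~\ref{submartingale},
\[
P\bigl(\{\lim_k W_k<\infty\}\cup\{\limsup_k W_k=\infty\}\bigr)=1.
\]
Because the increments are $\pm 1$, $W_k$ cannot converge to a finite limit, so $\limsup_k W_k=\infty$ almost surely.

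Next I would translate $W_k$ into growth of $\Delta_k$. For $k\ge K^*$ on $E$, Case~1(a) of Theorem~\ref{lemma.delta.gotozero} (i.e.\ Lemmas~\ref{lemma.delta.2}~and~\ref{lemma.delta.3}) forces every good iteration to be successful, so $\Delta_{k+1}=\gamma\Delta_k$ whenever $R_k=1$; on any iteration, successful or unsuccessful, the update satisfies $\Delta_{k+1}\ge\gamma^{-1}\Delta_k$. Writing $S_k,U_k$ for the number of successful and unsuccessful iterations in $[K^*,k)$ and noting $S_k\ge\sum_{j=K^*}^{k-1}R_j$,
\[
\log_\gamma\frac{\Delta_k}{\Delta_{K^*}}\;=\;S_k-U_k\;\ge\;\sum_{j=K^*}^{k-1}(2R_j-1)\;=\;W_k-W_{K^*}.
\]
Along the subsequence where $W_k\to\infty$ this yields $\Delta_k\to\infty$, contradicting $\Delta_k\le\Delta_{\max}$ (and equally contradicting $\Delta_k\to0$). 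Hence $P(E)=0$ and $\liminf_{k\to\infty}\|\nabla f(X_k)\|=0$ almost surely.

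\paragraph{Main obstacle.} The subtlety I expect to have to handle carefully is the interplay between the $\Delta_{\max}$ cap and the deterministic step bound $\Delta_{k+1}\ge\gamma^{-1}\Delta_k$ that the random walk comparison needs. The argument circumvents this by restricting attention to $k\ge K^*$, where $\Delta_k\to 0$ guarantees the cap never activates; but one has to make sure the measurable choice of $K^*(\omega)$ is consistent with the $\sigma$-algebra filtration being used when invoking the submartingale convergence theorem. Apart from that the rest is essentially bookkeeping on the filtration $\mathcal{F}^{M\cdot F}_{k-1}$ to justify $P(R_k=1\mid\mathcal{F}^{M\cdot F}_{k-1})\ge\alpha\beta$.
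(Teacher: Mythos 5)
Your proposal is correct and follows essentially the same route as the paper's own proof: both negate the conclusion, use $\Delta_k\to 0$ to enter the regime where Lemmas \ref{lemma.delta.2} and \ref{lemma.delta.3} force every good iteration ($I_k\cap J_k$) to be successful, and compare $\log_\gamma \Delta_k$ pathwise against the auxiliary $\pm1$ random walk $W_k=\sum_j(2\cdot\mathbf 1_{I_j\cap J_j}-1)$, whose $\limsup$ is $+\infty$ a.s.\ by Theorem \ref{submartingale} when $\alpha\beta\ge\tfrac12$. The only cosmetic difference is that the paper normalizes by a fixed threshold $b$ (setting $r_k=\log_\gamma(\delta_k/b)$) rather than by $\Delta_{K^*}$, and it resolves your measurability worry exactly as you suggest: the submartingale property of $W_k$ is established unconditionally, and the comparison with $\Delta_k$ is a purely pathwise inequality.
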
 

\begin{proof}
We prove this result by contradiction conditioned on the almost sure event $\Delta_k \rightarrow 0$.  Let us thus assume that there exists $ \epsilon'$  such that, with positive probability, we have 
$$\|\nabla f(X_k)\|\ge \epsilon',\quad\forall k.$$

Let $ \{x_k\}$ and $\{\d_k\}$ be realizations of $\{X_k\}$ and $\{\Delta_k\}$, respectively for which $\|\nabla f(x_k)\|\ge \epsilon',\quad\forall k.$
Since $\lim\limits_{k\to \infty}\d_k=0$ (because we conditioned on $\Delta_k \rightarrow 0$), there exists $k_0$ such that for all $k\ge k_0$,
\begin{eqnarray}\label{eq:delta_small}
\d_k<b:=\min \left\{ \dfrac{\epsilon'}{2\kappa_{eg}}, \dfrac{\epsilon'}{2\kappa_{bhm}}, \dfrac{\kappa_{fcd}(1-\eta_1) \epsilon'   }{16\kappa_{ef}}, \dfrac{\epsilon'}{2\eta_2},\dfrac{\d_{\max}}{\gamma}    \right\}.
\end{eqnarray}
 We define a random variable $R_k$ with realizations $r_k=\log_{\gamma}\left( \dfrac{\d_k}{b} \right)$. Then for the realization $\{ r_k \}$ of $\{ R_k\}$,  $r_k<0$ for $k\ge k_0$. The main idea of the proof
 is to show that such realizations occur only with probability zero, hence obtaining a contradiction with the initial assumption of $\|\nabla f(x_k)\|\geq \epsilon'$ $\forall k$.  
 
 We first show that $R_k$ is a submartingale.
Recall the  events $I_k$ and $J_k$ in Definitions \ref{probabilisticmodel} and \ref{asmpesti}. 
Consider some iterate $k\ge k_0$ for which $I_k$ and $J_k$ both occur, which happens with probability $P(I_k\cap J_k)\ge  \alpha\beta$.  Since \eqref{eq:delta_small} holds we have exactly the same 
situation as in Case 1(a) in the proof of Theorem  \ref{lemma.delta.gotozero}. In other words, we can apply Lemmas   \ref{lemma.delta.2} and \ref{lemma.delta.3} to conclude that the $k$-th iteration is successful,
hence, the  trust-region radius is increased. In particular, since $\d_k\le \frac{\d_{\max}}{\gamma}$,  $\d_{k+1}=\gamma\d_{k}$. Consequently, $r_{k+1}=r_k+1$.
 \vspace{0.1in}

Let $\mathcal{F}_{k-1}^{I\cdot J}=\sigma({I_0},\cdots, {I_{k-1}})\cap \sigma  ({J_0},\cdots, J_{k-1})$. For all other outcomes of $I_k$ and $J_k$, which occur with total probability of at most  $1-\alpha\beta$, we have $\d_{k+1}\ge \gamma^{-1}\d_{k}$.  Hence
\begin{eqnarray*}
&\Embb[ r_{k+1}| \mathcal{F}_{k-1}^{I\cdot J}] \geq\alpha\beta(r_k+1)+(1-\alpha\beta)(r_k-1)\ge r_k,
\end{eqnarray*}
as long as $\alpha\beta\geq 1/2$, which implies that $R_k$ is a submartingale. 

Now let us construct another submartingale $W_k$, on the same probability space as $R_k$ which will serve as a lower bound on $R_k$ and for which  $\left\{\underset{k\to \infty}{\lim\sup}\ W_k=\infty\right\}$ holds almost surely.
Define indicator random variables $\textbf{1}_{I_k}$ and $\textbf{1}_{J_k}$ such that $\textbf{1}_{I_k}=1$ if $I_k$ occurs, $\textbf{1}_{I_k}=0$ otherwise, and similarly, $\textbf{1}_{J_k}=1$ if $J_k$ occurs, $\textbf{1}_{J_k}=0$ otherwise. Then define
\begin{eqnarray*}
W_k=\sum\limits_{i=0}^k(2\cdot \textbf{1}_{I_k}\cdot \textbf{1}_{J_k}-1).
\end{eqnarray*}
Notice that $W_k$ is a submartingale since 
\begin{eqnarray*}
\Embb[ W_k|\mathcal{F}_{k-1}^{I\cdot J}  ] &=&E[ W_{k-1}|\mathcal{F}_{k-1}^{I\cdot J}  ] +E[2\cdot \textbf{1}_{I_k}\cdot \textbf{1}_{J_k}-1| \mathcal{F}_{k-1}^{I\cdot J}  ]\\
&=& W_{k-1} +  2E[ \textbf{1}_{I_k}\cdot \textbf{1}_{J_k}| \mathcal{F}_{k-1}^{I\cdot J}  ]  -1\\
&=& W_{k-1}  +2P( I_k\cap J_k |\mathcal{F}_{k-1}^{I\cdot J} )-1\\
&\ge& W_{k-1},
\end{eqnarray*}
where 
the last inequality holds because $\alpha\beta\geq 1/2$. 
Since $W_k$ only has $\pm 1$ increments, it has no finite limit. Therefore, by Theorem \ref{submartingale}, we have $\left\{\underset{k\to \infty}{\lim\sup}\ W_k=\infty\right\}$.

By the construction of $R_k$ and $W_k$, we know that $r_k-r_{k_0}\ge w_k -w_{k_0}$. Therefore, $R_k$ has to be positive infinitely often with probability one. This implies that the sequence of realizations $r_k$ such that $r_k<0$
 for $k\ge k_0$ occurs with probability zero. Therefore our assumption that $ \| \nabla f(X_k)  \|\geq \epsilon '$  hold for all $k$ with positive probability is false and $$ \underset{k\to \infty}{\lim \inf} \| \nabla f(X_k)  \|=0 $$ holds
almost surely.

\hfill\end{proof}

\subsection{The lim-type convergence}

In this subsection we show that $\lim_{k\to\infty} \|\nabla f(X_k) \|=0$ almost surely.
%

We now state an auxiliary lemma, which is similar to the one in  \cite{Bandeiraetal2013}, but requires a different proof because in our case the function values $f(X_k)$ can increase with $k$, while
in the case considered in \cite{Bandeiraetal2013}, function values are monotonically nonincreasing. 

\begin{lemma}\label{auxLemma_to_theorem:DFO_Random_SIMPLE1_convLIM}
 Let the same assumptions that were made in Theorem \ref{th:convergenceliminf} hold. 
Let $\{ X_k \}$ and $\{ \Delta_k \}$ be sequences of random iterates and
random trust-region radii generated
by Algorithm~\ref{algo:stodfosimple}. Fix $\epsilon>0$ and define the sequence $\left\{K_\epsilon\right\}$ consisting of the natural numbers $k$ for which $\|\nabla f(X_k)\| >\epsilon$ (note that $K_\epsilon$ is a sequence of random variables). Then,
$$\sum_{k\in \{K_\epsilon\}}{\Delta_k} \; < \; \infty$$
almost surely.
\end{lemma}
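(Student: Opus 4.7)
The plan is to partition $\{K_\epsilon\}$ according to whether iteration $k$ falls into Case~1 or Case~2 of Theorem~\ref{lemma.delta.gotozero}. Specifically, I would introduce the $\mathcal{F}_{k-1}^{M\cdot F}$-measurable events
\[
A_k := \{\|\nabla f(X_k)\| > \epsilon\}, \qquad
B_k := \{\|\nabla f(X_k)\| \ge \zeta \Delta_k\},
\]
and write $\sum_{k\in K_\epsilon}\Delta_k = \sum_k \mathbf{1}_{A_k\cap B_k}\Delta_k + \sum_k \mathbf{1}_{A_k\cap B_k^c}\Delta_k$, handling each piece with the relevant conditional estimate from Theorem~\ref{lemma.delta.gotozero}. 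The almost-sure event $\sum_k \Delta_k^2 < \infty$ from that theorem (and hence $\Delta_k\to 0$ a.s.) is the backdrop throughout.

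For the complementary piece, $A_k\cap B_k^c$ forces $\Delta_k > \|\nabla f(X_k)\|/\zeta > \epsilon/\zeta$, so since $\Delta_k\to 0$ almost surely only finitely many iterations can fall in this set, and $\sum_k \mathbf{1}_{A_k\cap B_k^c}\Delta_k < \infty$ a.s.\ is immediate. This part requires essentially no new work.

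The main piece is controlled via the sharper Case~1 bound \eqref{eq:expectPhi1}. Because $B_k$ is $\mathcal{F}_{k-1}^{M\cdot F}$-measurable, multiplying that inequality by $\mathbf{1}_{B_k}$ gives $E[(\Phi_{k+1}-\Phi_k)\mathbf{1}_{B_k}\mid \mathcal{F}_{k-1}^{M\cdot F}] \le -\tfrac{1}{4}C_1\nu \|\nabla f(X_k)\|\Delta_k \mathbf{1}_{B_k}$, while on $B_k^c$ the Case~2 bound \eqref{eq:final.decrease.in.phi.2} still yields a non-positive conditional increment. I would then define the adapted process
\[
U_k \;:=\; \Phi_k + \frac{C_1\nu}{4}\sum_{j=0}^{k-1} \|\nabla f(X_j)\|\,\Delta_j\,\mathbf{1}_{B_j},
\]
which the two bounds above combine to show is a supermartingale with respect to $\{\mathcal{F}_{k-1}^{M\cdot F}\}$. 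Since $\Phi_k \ge \nu\inf_{{\cal L}_{enl}(x_0)} f$ and the added sum is non-negative, $U_k$ is bounded below; the supermartingale convergence theorem then produces an a.s.\ finite limit $U_\infty$. As $U_k - \Phi_k$ is non-decreasing and bounded above by $U_k - \nu\inf f$, the partial sums $\sum_{j<k}\|\nabla f(X_j)\|\Delta_j\mathbf{1}_{B_j}$ converge to a finite limit a.s.

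To close, I use $\|\nabla f(X_j)\|>\epsilon$ on $A_j$ to bound
\[
\sum_k \mathbf{1}_{A_k\cap B_k}\Delta_k \;\le\; \frac{1}{\epsilon} \sum_k \|\nabla f(X_k)\|\Delta_k \mathbf{1}_{B_k} \;<\;\infty \quad\text{a.s.},
\]
and combine with the easy piece. The one subtle step is the supermartingale construction: one must simultaneously absorb the sharper Case~1 decrement into the added sum and know that Case~2 still provides at worst a zero conditional contribution; getting this bookkeeping right, together with verifying that $A_k,B_k,\Phi_k$ all sit in the correct $\sigma$-algebra so that indicators may be pulled through the conditional expectation, is the only real obstacle—everything else is routine.
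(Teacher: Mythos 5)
Your proof is correct and follows essentially the same route as the paper's: both rest on the Case 1 drift bound \eqref{eq:expectPhi1}, on the fact that $\Delta_k \to 0$ almost surely disposes of the finitely many iterations of $K_\epsilon$ on which $\|\nabla f(X_k)\| < \zeta\Delta_k$, and on the lower bound for $\Phi_k$. Your explicit supermartingale $U_k$ is simply a more careful rendering of the paper's looser step of ``summing the conditional inequality over the random index set and taking expectations,'' so if anything your bookkeeping is tighter than the published argument.
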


\begin{proof}

From Theorem  \ref{lemma.delta.gotozero} we know that $\sum \Delta_k^2 <\infty$ and hence $\Delta_k\to 0$ almost surely. 
For each  realization of Algorithm~\ref{algo:stodfosimple} and a sequence $\{\delta_k\}$, 
there exists $k_0$ such that  $\delta_k\leq \epsilon/\zeta$, $\forall k\geq k_0$, where $\zeta$ is defined as in Theorem  \ref{lemma.delta.gotozero}. Let $K_0$ be the random variable with realization $k_0$ and let 
$K$ denote the sequence of indices $k$ such that $k\in K_\epsilon$ and $k\geq K_0$. 
Then for all $k\in K$, Case 1 of 
Theorem  \ref{lemma.delta.gotozero} holds, i.e., $\| \nabla f(X_k)  \| \geq \zeta \Delta_k$, since $
\| \nabla f(X_k)  \|\geq \epsilon$ for all $k\in K$. From this and from \eqref{eq:expectPhi1} we have
\[
E[\Phi_{k+1}-\Phi_{k}|\mathcal{F}_{k-1}^{M\cdot F}]  \le - \frac{1}{2}C_1\nu \epsilon \Delta_k, \quad\forall k\geq k_0.
\]

Recall that $\Phi_{k}$ is bounded from below. Hence, summing up the above inequality for all  $k\in K$ and taking the expectation, we have that
$$\sum_{k\in K}{\Delta_k} \; < \; \infty$$
almost surely. Since $K_\epsilon\subseteq K\cup \{k\leq K_0\}$ and $K_0$ is finite almost surely then the   statement of the lemma holds.
\end{proof}

%

We are now ready to state the $\lim$-type result.

\begin{theorem}\label{theorem:DFO_Random_simple_convLIM}
Let the same assumptions as in Theorem \ref{th:convergenceliminf} hold. 
Let $\{ X_k \}$ be a sequence of random iterates generated
by Algorithm~\ref{algo:stodfosimple}. Then, almost surely,
\[
\lim_{k\to\infty} \|\nabla f(X_k)\| \; = \; 0.
\]
\end{theorem}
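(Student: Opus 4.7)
The plan is to argue by contradiction, following the classical trust-region template while tracking the almost-sure qualifiers carefully. Suppose, toward a contradiction, that the event $E = \{\limsup_{k\to\infty}\|\nabla f(X_k)\| > 0\}$ has positive probability. On $E$ there exists a (random) positive threshold, so by intersecting with the almost-sure event from Theorem~\ref{th:convergenceliminf} that $\liminf_k \|\nabla f(X_k)\| = 0$, and then taking a countable union over rational thresholds, we obtain rational numbers $\epsilon_1 > \epsilon_2 > 0$ and a set $E' \subseteq E$ of positive probability on which $\|\nabla f(X_k)\|$ crosses both thresholds infinitely often.

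On $E'$, define the random index sequences $\{m_\ell\}$ and $\{n_\ell\}$ recursively as the successive crossing times, so that $m_\ell < n_\ell < m_{\ell+1}$, $\|\nabla f(X_{m_\ell})\| \geq \epsilon_1$, $\|\nabla f(X_{n_\ell})\| < \epsilon_2$, and $\|\nabla f(X_k)\| \geq \epsilon_2$ for every $m_\ell \leq k < n_\ell$. Using the $L$-Lipschitz continuity of $\nabla f$ from Assumption~\ref{asmpf} together with the observation that $\|X_{k+1}-X_k\| \leq \Delta_k$ always (the bound is $0$ on unsuccessful iterations and $\|S_k\| \leq \Delta_k$ on successful ones), we obtain
\[
\epsilon_1 - \epsilon_2 \;\leq\; \|\nabla f(X_{m_\ell}) - \nabla f(X_{n_\ell})\| \;\leq\; L\,\|X_{m_\ell} - X_{n_\ell}\| \;\leq\; L \sum_{k=m_\ell}^{n_\ell - 1} \Delta_k.
\]

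Finally, I would invoke Lemma~\ref{auxLemma_to_theorem:DFO_Random_SIMPLE1_convLIM} with $\epsilon = \epsilon_2$: every index $k$ with $m_\ell \leq k < n_\ell$ belongs to $K_{\epsilon_2}$, so the inner sum above is a tail of the almost-surely convergent series $\sum_{k \in K_{\epsilon_2}} \Delta_k$, and therefore tends to zero as $\ell \to \infty$. This contradicts the strictly positive lower bound $(\epsilon_1 - \epsilon_2)/L$, forcing $P(E') = 0$ and, by countable union over the rationals, $P(E) = 0$.

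The main obstacle is the measurability bookkeeping around the random subsequences $\{m_\ell\},\{n_\ell\}$: one must verify that these are well-defined stopping-time-like indices on $E'$ and that the almost-sure conclusions of both Theorem~\ref{th:convergenceliminf} and Lemma~\ref{auxLemma_to_theorem:DFO_Random_SIMPLE1_convLIM} can be applied pathwise after removing a common null set. Once this measurability is set up, the chain-of-steps estimate together with the summability lemma closes the argument routinely, exactly as in the deterministic trust-region proof. An additional minor point is to justify $\|X_{k+1}-X_k\| \leq \Delta_k$ uniformly, which follows directly from the step calculation in Algorithm~\ref{algo:stodfosimple}.
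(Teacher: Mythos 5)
Your proposal is correct and follows essentially the same route as the paper, which defers the details to the corresponding theorem in \cite{Bandeiraetal2013} but identifies the identical key idea: if the claim fails, the Lipschitz continuity of $\nabla f$ and the two-threshold crossing argument force $\sum_{k\in K_\epsilon}\Delta_k=\infty$ with positive probability, contradicting Lemma \ref{auxLemma_to_theorem:DFO_Random_SIMPLE1_convLIM}. The only cosmetic point is that since $K_{\epsilon}$ is defined by the strict inequality $\|\nabla f(X_k)\|>\epsilon$, you should invoke the lemma with a threshold strictly below $\epsilon_2$ (or define the crossing times with a strict inequality), but this does not affect the argument.
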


{\proof The proof of this result, is almost  identical to the proof of the same theorem in  \cite{Bandeiraetal2013} hence we will not present the proof here. The key idea of the proof is to show that if the theorem does not hold, then with positive probability $$\sum_{k\in \{K_\epsilon\}}{\Delta_k} \; = \; \infty,$$
with $K_\epsilon$ defined as in Lemma \ref{auxLemma_to_theorem:DFO_Random_SIMPLE1_convLIM}. This result is shown using Lipschitz continuity of the gradient and does not depend on the stochastic nature of the algorithm. Since this result contradicts the almost sure result of Lemma \ref{auxLemma_to_theorem:DFO_Random_SIMPLE1_convLIM}, we can conclude that the statement of the theorem holds almost surely. }

\section{Constructing models and estimates in different stochastic settings.}\label{sec:learningbounds}
We now discuss various settings of stochastic noise in the objective function and how $\alpha$-probabilistically
$\kappa$-fully linear models and $\beta$-probabilistically $\epsilon_F$-accurate estimates can be obtained in these settings.

Recall that we assume that for each $x$ we can compute the value of $\tilde{f}(x)$, which is the noisy  version of $f$, 
$$\tilde{f}(x) = f(x,\omega),$$
where  $\omega$ is a random variable which induces the noise.

\paragraph{Simple stochastic noise.} The example of noise which is typically considered in stochastic optimization
is the ``i.i.d." noise,  that is noise with distribution independent of the function values. Here we consider a somewhat more general setting, where 
the noise is unbiased for all $f$, i.e., 
$$E_\omega[{f}(x,\omega)] = f(x), \quad \forall x, $$
and
$$\text{Var}_\omega[{f}(x,\omega)] \leq V<\infty, \quad \forall x. $$

This is the typical noise assumption in stochastic optimization literature. 
In the case of unbiased noise as above, constructing estimates and models that satisfy our assumptions is fairly straight-forward. 
First, let us consider the case when only the noisy function values are available (without any gradient information), where we want to construct a model that is $\kappa$-fully linear in a given trust region $B(x^0, \delta)$ with some reasonably large probability, $\alpha$. 

One can employ standard sample averaging approximation techniques to reduce the variance of the function evaluations. In particular, let $\bar {f}_p(x,\omega)= \frac{1}{p} \sum_{i=1}^p{f}(x,\omega_i)$, where 
$\omega_i$ are the
i.i.d. realizations of the noise $\omega$.  Then, by Chebyshev inequality, for any $v>0$, 
\[
P(|\bar {f}_p(x,\omega)-f(x)]| >  v )=P(|\bar {f}_p(x,\omega)-E_\omega[{f}(x,\omega)]| >  v )\leq \frac{V}{pv^2}.
\]
In particular,  we want $v= \kappa^\prime_{ef}\delta^2$ for some $\kappa^\prime_{ef}>0$ and $\frac{V}{pv^2}\leq 1-\alpha^\prime$ for some $\alpha^\prime$, which can be ensured by choosing  
$p\geq \frac{V}{(\kappa^\prime_{ef})^2(1-\alpha^\prime) \delta^4}$. 

We  now construct a fully linear model as follows: given a  well-poised set\footnote{See \cite{DFObook} for details on well-poised sets and how they can be obtained.} $Y$ of $n+1$ points in $B(x^0, \delta)$, at each point $y^i\in Y$, we  compute
$\bar{f}_p(y^i, \omega )$ and build a  linear interpolation model $m(x)$ such that $m(y^i)=\bar{f}_p(y^i, \omega )$, for all 
$i=1, \ldots, n+1$. Hence, for any $y^i\in Y$, we have
\[
P(|m(y^i)-{f}(y^i)] |>  \kappa^\prime_{ef} \delta^2) \leq 1-\alpha^\prime.
\]
Moreover, the events $\{|m(y^i)-f(y^i)]| >  \kappa^\prime_{ef} \delta^2\}$ are independent,
hence 
\[
P(\max_{i=1..n+1} \{|m(y^i)-{f}(y^i)]|\} >  \kappa^\prime_{ef} \delta^2) \leq 1-(\alpha^\prime)^{n+1}. 
\]
It is easy to show using, for example, techniques described in \cite{DFObook}, that $m(x)$ is a $\kappa$-fully linear model of $E_\omega[{f}(x,\omega)] $
in $B(x^0, \delta)$ for appropriately chosen 
$\kappa=(\kappa_{eg}, \kappa_{ef})$, with probability at least $\alpha=(\alpha^\prime)^{n+1}$.

Computing the $\beta$-probabilistically $\epsilon_F$-accurate estimates of $f(x, \omega)$ can be done analogously to the construction of the models described above.


The majority of stochastic optimization and sample average approximation methods focus on
derivative based optimization where it is assumed  that, in addition to  $f(x, \omega)$, $\nabla_x f(x, \omega)$ is also available, 
 and that the noise in the gradient computation is also
independent of $x$, that is
$$E_\omega[\nabla_x{f}(x,\omega)] =\nabla f(x), \quad \forall x  $$
and 
$$\text{Var}_\omega[\|\nabla_x{f}(x,\omega)\|] \leq V<\infty, \quad \forall x, $$
(in general the variance of the gradient and the function value  are not the same, but here for simplicity we bound both by $V$). 

In the case when the noisy gradient values are available, the construction of fully linear models in $B(x^0, \delta)$ is  simpler. 
Let $\bar\nabla {f}_p(x,\omega)= \frac{1}{p} \sum_{i=1}^p{\nabla f}(x,\omega_i)$. 
Again, by extension of Chebychev inequality, for $p$ such that
\[
p\geq \max\{\frac{V}{\kappa^2_{ef}(1-\alpha^\prime) \delta^4}, \frac{V}{\kappa^2_{eg}(1-\alpha^\prime) \delta^2}\},
\]
\[
P(\|\bar \nabla f_p(x^0,\omega)-\nabla f(x^0)] \|>  \kappa_{eg}\delta )=P(\|\bar \nabla f_p(x^0,\omega)-E_\omega[{\nabla f}(x^0,\omega)] \|>  \kappa_{eg}\delta )\leq 1-\alpha^\prime, 
\]
and 
\[
P(|\bar {f}_p(x^0,\omega)-{f}(x^0)| >  \kappa_{ef}\delta^2)=P(|\bar {f}_p(x^0,\omega)-E_\omega[{f}(x^0,\omega)]| >  \kappa_{ef}\delta^2)\leq 1- \alpha^\prime.
\] 
Hence  the linear expansion
$m(x)=\bar {f}_p(x^0,\omega)+\bar \nabla f_p(x^0, \omega)^T(x-x^0)$ is a $\kappa$-fully linear model of $f(x)=E_\omega[{f}(x,\omega)] $
on $B(x^0, \delta)$ for appropriately chosen $\kappa=(\kappa_{eg}, \kappa_{ef})$, with probability at least $\alpha=(\alpha^\prime)^2$.

In \cite{larson2013stochastic} it is shown that  least squares regression models based on sufficiently large strongly poised \cite{DFObook} sample sets 
are $\alpha$-probabilistically $\kappa$-fully linear models.

There are many existing methods and convergence results using sample average approximations and stochastic gradients for stochastic optimization with i.i.d. or unbiased noise. 
Some of these methods have been shown to achieve optimal sampling rate \cite{Pasupathyetal}, that is they converge to the optimal solution while sampling the gradient at the best possible rate. We do not provide convergence rates in this paper (it is a subject for future research), hence it remains to be seen if our algorithm can achieve the  optimal rate. Our contribution here is the method which applies beyond the i.i.d. case, as we will discuss below. In Section \ref{sec:computations}, however, we demonstrate that our method can have superior numerical behavior compared to standard sample averaging even in the case of the i.i.d. noise, so it is at least competitive in practice. 

\paragraph{Function computation failures.}
Now, let us consider a more complex noise case. Suppose that the function $f(x)$ is computed (as it often is, in black-box optimization) by some numerical method
that includes a random component.  Such examples are common in machine learning applications, for instance, where $x$ is the vector of hyperparameters of a learning algorithm and $f(x)$ is the expected error of the resulting classifier. In this case, 
for each value of $x$, the classifier is obtained by solving an optimization problem, up to a certain accuracy, on a given training set. Then the classifier is evaluated on the testing set. If a randomized coordinate descent or a stochastic gradient descent is applied  to train the classifier for a given vector $x$, then the resulting classifier
is sufficiently close to the optimal classifier with some known probability. However, in the case when the training of the classifier fails to produce a sufficiently  accurate 
solution, the resulting error is difficult to estimate.  Usually, it is possible to know the upper bound on the value of this inaccurate  objective function  but nothing else may be known about the distribution  of this value. Moreover, it is likely that the probability of the accurate computation depends on  $x$; for example, after $k$ 
iterations  of randomized coordinate descent, the error between the true $f(x)$ and the computed $\tilde f(x)$ is bounded by some value, with probability $\alpha$, where the value depends on $\alpha$, $k$, and $x$ \cite{RichtarikTakac}. 

Another example is solving a system of nonlinear black-box equations. Assume that we seek $x$ such that $\sum_i (f_i(x))^2=0$, for some functions $f_i(x)$, $i=1, \ldots, m$ that are computed by numerical simulation, with noise. 
As is often done in practice (and is supported by our theory) the noise in the function computation is reduced as the algorithm progresses, for example, by reducing the size of a discretization, step size, or convergence tolerance within the black-box computation. These adjustments for noise reduction
 usually 
 increase the workload of the simulation.  With the increase of the workload, there is an increased probability of failure of the code. Hence, the smaller the values of $f_i(x)$, the more likely the computation of $f_i(x)$ will fail and some inaccurate value is returned. 
 
 These two examples show that the noise in $\tilde f(x)$ may be large with some positive probability, which may depend on $x$. 
 Hence, let us  consider the following, idealized, noise model
$$
\tilde{f}(x) = f(x,\omega)=\left \{ \begin{array}{ll} f(x), & {\rm w.p. } \ 1-\sigma(x)\\ \omega(x)\leq V & {\rm w.p.} \ \sigma(x), \end{array}\right.
$$
where $\sigma(x)$ is the probability with which the function $f(x)$ is computed inaccurately, and $\omega(x)$ is some random function of $x$, for which 
only an upper bound $V$ is known. This case is idealized, because we assume that with probability $1-\sigma(x)$, $f(x)$ is computed exactly. It is trivial to extend
this example to the case when $f(x)$ is computed with an error, but this error can be made sufficiently small.

For this model of function computation failures we have
\[
E_\omega[{f}(x,\omega)] =(1-\sigma(x))f(x)+\sigma(x)E [\omega(x)]\neq  f(x), \quad \forall \sigma(x)>0.
\]
and it is clear, that  for any $\sigma(x)>0$, unless $E[\omega(x)]\equiv \text{some constant}$, optimizing $E_\omega[{f}(x,\omega)]$ does not give the same result as optimizing $f(x)$. 
Hence applying Monte-Carlo sampling within an optimization algorithm solving this problem is not a correct approach. 

We now observe that constructing $\alpha$-probabilistically
$\kappa$-fully linear models and $\beta$-probabilistically $\epsilon_F$-accurate estimates is trivial in this case, assuming that $\sigma(x)\leq \sigma$ for all $x$, when $\sigma$ is small enough. 
In particular, given a trust region $B(x^0, \delta)$, sampling a function $f(x)$ on a sample set $Y\subset B(x^0, \delta)$ well-poised for linear interpolation will produce a $\kappa$-fully linear model in $B(x^0, \delta)$ with probability at least $(1-\sigma)^{|Y|}$, since with this probability all of the function values are computed exactly. Similarly,
for any $s\in B(x^0, \delta)$, the function estimates $F^0$ and $F^s$ are both correct with probability at least $(1-\sigma)^2$. 
Assuming that $(1-\sigma)^{|Y|}\geq \alpha$ and $(1-\sigma)^2\geq \beta$, where $\alpha$ and $\beta$ satisfy the assumptions of Theorem  \ref{lemma.delta.gotozero}  and Lemma \ref{th:constants} and $\alpha\beta\geq\frac{1}{2}$ as in Theorem \ref{th:convergenceliminf},
we observe that the resulting models satisfy our theory. 

\begin{remark} We assume here that the probability of failure to compute $f(x)$ is small enough for all $x$. 
In the machine learning example above, it is often possible to control the probability $\sigma(x)$ in the computation of $f(x)$, for example by increasing the number of 
iterations of a randomized coordinate descent or stochastic gradient method.  In the case of the black-box nonlinear equation solver, the probability of code failure
is expected to be quite small. There are, however, examples  of black box optimization problems where the computation 
of $f(x)$ fails all the time for specific values of $x$. This is often referred to as hidden constraints \cite{MoreW09}. 
 Clearly our theory does not apply here, but we believe there is no local method that can provably converge to a local minimizer in such a setting without additional information about these specific
 values of $x$. 
\end{remark}

\section{Computational Experiments} \label{sec:computations}

In this section, we will discuss the performance of several variants of our proposed method (varied in the way the models are constructed), 
henceforth only referred to as STORM (STochastic Optimization using Random Models), 
that target various noisy situations discussed in the previous section. 
We note that a comparison  of  STORM 
to the SPSA method of \cite{SPSA} and the classical Kiefer-Wolfowitz method in \cite{KieferWolfowitz} has been reported in 
 \cite{RChen_2015} and shows that
STORM significantly outperformed these two methods, while no special tuning of  SPSA or Kiefer-Wolfowitz was
applied. Since 
a trust-region based method, which is  able to use second order information is likely to 
outperform stochastic gradient-like methods in many settings, we omit such comparison here. 

Throughout this section, all proposed algorithms were implemented in Matlab and all experiments were performed on
a laptop computer running Ubuntu 14.04 LTS with an Intel Celeron 2955U @ 1.40GHz dual processor.  

\subsection{Simple stochastic noise}
In these experiments, we used a set of 53 unconstrained problems adapted from the CUTEr test set, 
each being in the form of a sum of squares problem, i.e.

\begin{equation}
\label{ssq}
f(x) = \displaystyle\sum_{i=1}^m (f_i(x))^2,
\end{equation}
where for each $i\in \{1,\dots,m\}$, $f_i(x)$ is a smooth function. 
Two different types of noise will be used in this first subsection, which we will refer to as \emph{multiplicative} noise 
and \emph{additive} noise. In the multiplicative noise case, for each $i\in\{1,\dots,m\}$, we generate some $\omega_i$ 
from the uniform distribution on $[-\sigma,\sigma]$ for some parameter $\sigma>0$, and then compute the noisy function

\begin{equation}
\label{ssq:rel}
\tilde f(x,\omega) = \displaystyle\sum_{i=1}^m( (1+\omega_i)f_i(x))^2.
\end{equation}

The key characteristic of this noise is that for each $x$, we have $E_{\omega}[f(x,\omega)] = f(x)$, 
however the variance is nonconstant over $x$ and scales with the magnitudes of the components $f_i(x)$. 
Thus, one should expect that if an algorithm is minimizing a function of the form \eqref{ssq:rel},
the accuracy of the
estimates of $f(x)$ based on a constant  number of samples of $\tilde f(x,\omega)$ should increase,
assuming that the algorithm produces a decreasing sequence $\{f(x^k)\}_{k=1}^\infty$. 
While this behavior is not supported by theory, because we do not know how quickly $f(x^k)$ decreases, 
our computational results show that a constant number of samples is indeed sufficient for convergence. \\

The other type of noise we will test is \emph{additive}, i.e. we additively perturb each component in \eqref{ssq} 
by some $\omega_i$ uniformly generated in $[-\sigma,\sigma]$ for some parameter $\sigma>0$. That is,

\begin{equation}
\label{ssq:add}
\tilde f(x,\omega) = \displaystyle\sum_{i=1}^m (f_i(x)+\omega_i)^2
\end{equation}

Note that the noise is additive only in terms of the component functions, but not in terms of the objective function, moreover
$E_{\omega}[f(x,\omega)] = f(x)+\sum_{i}^mE(\omega_i)^2$. However, the constant bias  term does not affect optimization results, since
$\min_x E_{\omega}[f(x,\omega)] = \min_x f(x)$.

In our first set of experiments  for these two noisy settings, we compare  a version of STORM to a version of sample average 
based trust region algorithms, which we will call ``TR-SAA'', and which is similar to a trust-region algorithm presented in \cite{DengFerris}. 
Similar method, with convergence guarantees, has been recently proposed in \cite{2015sarhasetal}.
 In their work, they  use a Bayesian scheme 
to select a sufficiently large sample complexity for computing average function values at a current interpolation set. 
Here, in TR-SAA, we simplify this approach, by  increasing
sample complexity in each iteration proportionally to the decrease of the trust region radius $\delta_k$. 
A description of TR-SAA is given in Algorithm \ref{tr-saa} in the Appendix. 

There are two particular aspects of TR-SAA that we would like to draw attention to: in the estimate calculation step,
the computation of $f_k^0$ is performed {\em before}  the model $m_k$ is constructed, and $m_k$ is built to interpolate $f_k^0$, 
hence the quality of estimate $f_k^0$ and that of the model  $m_k$ are dependent.  Additionally,
the quality of the model $m_k$ is dependent on that of $m_{k-1}$ because the samples are reused.
Both of these aspects are violations of the typical assumptions of STORM. 
Thus, we also propose TR-SAA-resample, which is the same algorithm as TR-SAA except that at each iteration, the function value
at every interpolation point is recomputed as an average of function evaluations, independent of past function evaluations.
While TR-SAA-resample may overcome some of the problems of the dependence of $m_k$ on $m_{k-1}$, it still doesn't satisfy
the assumptions of STORM because of the dependence of $f_k^0$ on $m_k$. 

Thus, in Algorithm \ref{STORM-addrel}, stated in the Appendix,
we propose a version of STORM, comparable to TR-SAA in terms of sample sizes.
In Algorithm \ref{STORM-addrel}, the models $m_k$ and $m_{k-1}$ are entirely independent since a new regression set is drawn
in each iteration. Additionally, in the estimates calculation step, the computations of $f_k^0$ and $f_k^s$ are completely
independent of the model $m_k$. For these reasons, Algorithm \ref{STORM-addrel} is more in line with the theory analyzed
in this paper than a sample average approximation scheme like in Algorithm \ref{tr-saa}. 

For each of the 53 problems, the best known value of the noiseless $f(x)$ obtained by a solver is recorded as $f^*$. 
We recorded the number of function evaluations required by a solver 
to obtain a function value $f(x^k) < f'$ such that
\begin{equation}
\label{tau_sufficiently_solved}
1-\tau < \displaystyle\frac{f(x^0)-f'}{f(x^0)-f*}.
\end{equation}
This number was averaged over 10 runs for each problem.
In the profiles shown in Figure \eqref{fig:multaddnoise} for the multiplicative noise case $\tau=10^{-3}$.
 In all the experiments, a budget of $1000(n+1)$ noisy function evaluations was set.
For the choice of initialization, the same parameters were used in all of TR-SAA, TR-SAA-resample, and STORM-unbiased: 
$\delta_{\max} = 10, \delta_0=1, \gamma=2,\eta_1=0.1,\eta_2=0.001,p_{\min} = 10.$

\begin{figure}\label{fig:multaddnoise}
\begin{center}
\includegraphics[height=4.9cm]{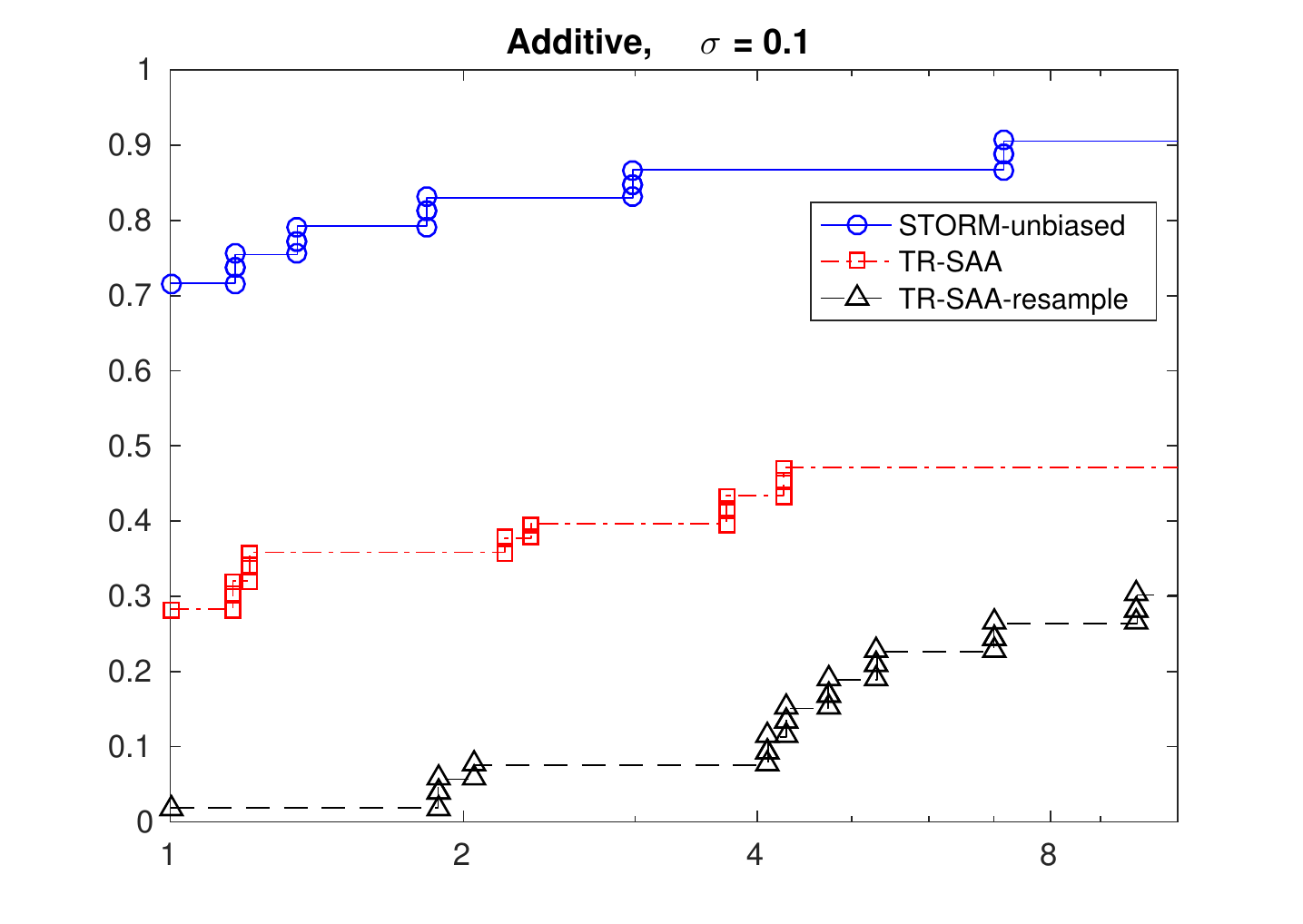}
\includegraphics[height=5cm]{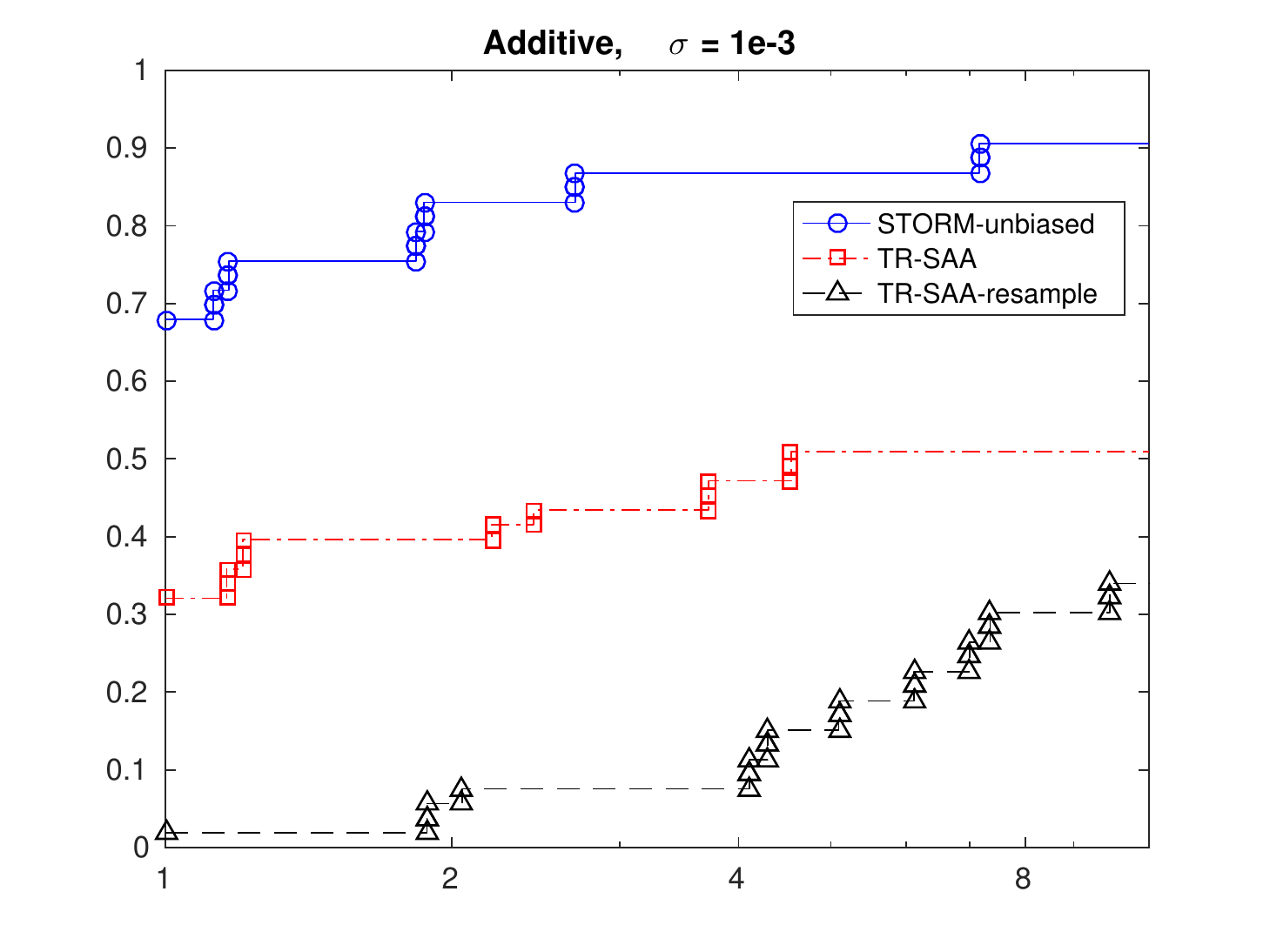}\\

\includegraphics[height=5cm]{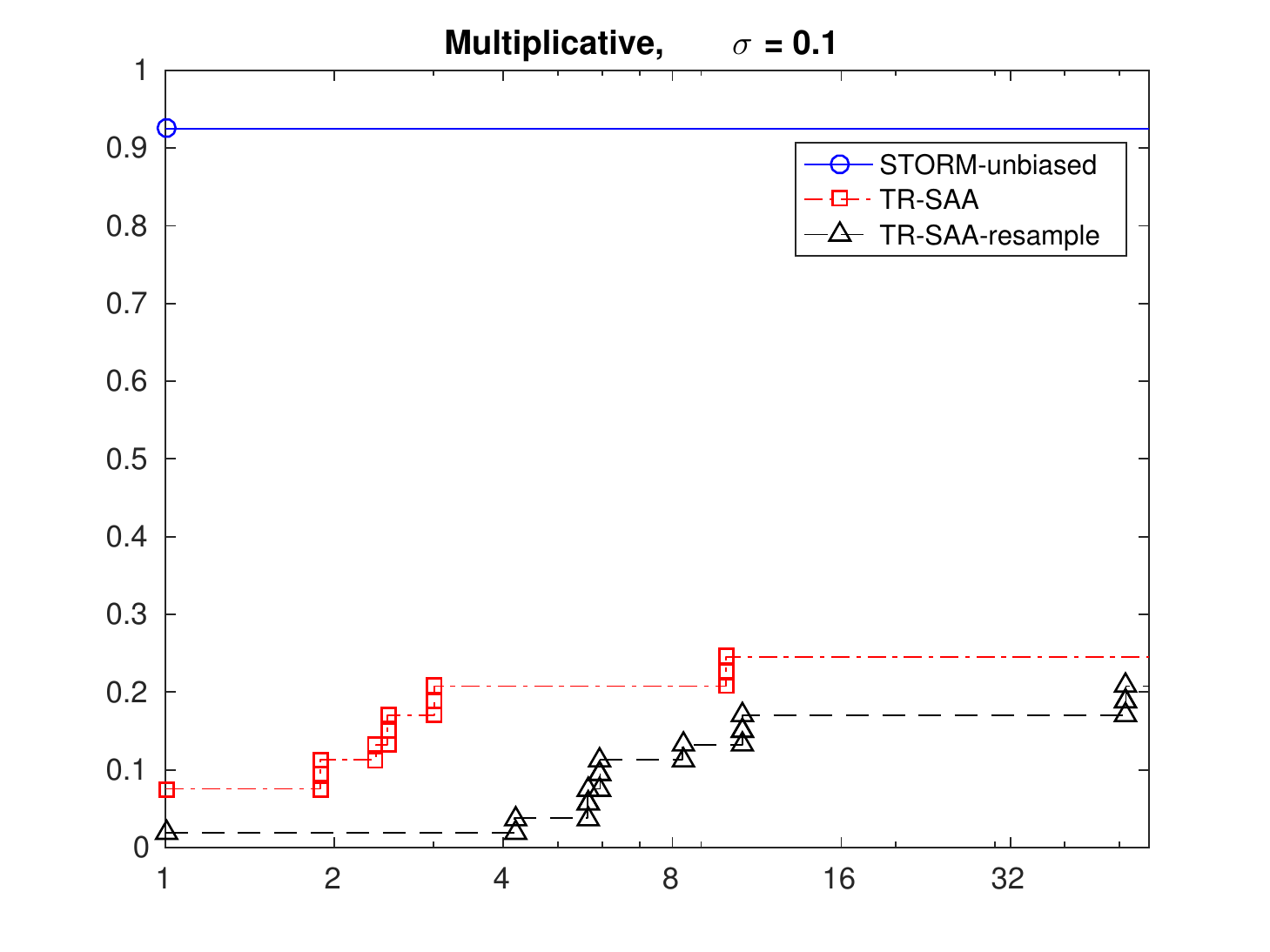}
\includegraphics[height=5cm]{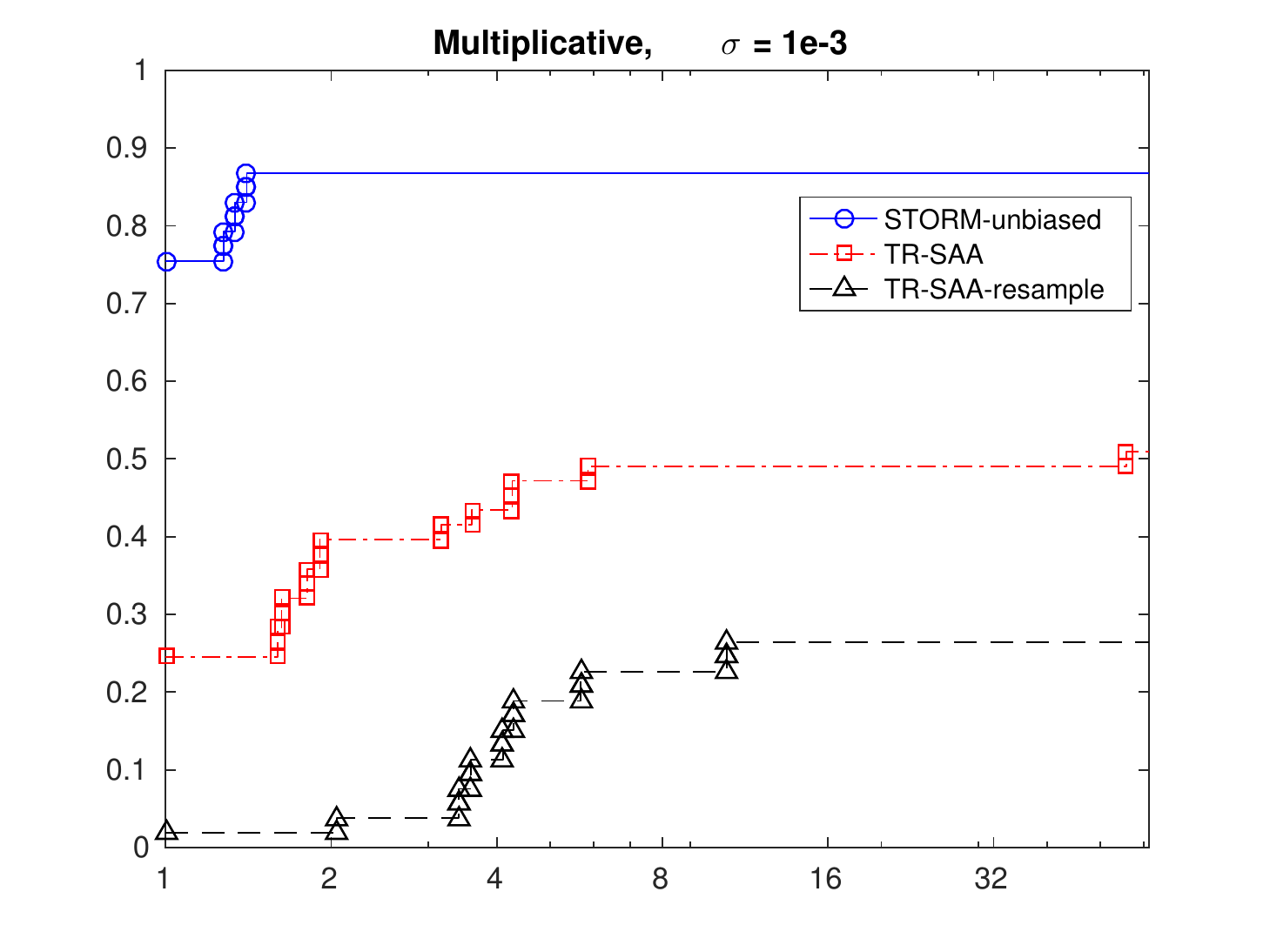}
\end{center}
\caption{Performance profiles ($\tau=10^{-3}$) comparing STORM-unbiased, TR-SAA, and TR-SAA-resample on the testset.}
\end{figure}

Note that even though we have ignored the theoretical prescription derived in the previous section that sample rate should scale with $1/\delta_k^4$,
we note that STORM-unbiased performs extremely well compared to the TR-SAA method. Although we chose to sample at a rate so that
$p_k$ was on the order of $1/\delta_k$, this particular sample rate was chosen after testing various other rates on the same set of test functions,
and seemed to work relatively well for both STORM-unbiased and TR-SAA. 



\paragraph{Function computation failures.} In these experiments, we used the same 53 sum of squares problems as in the unbiased noise experiments described above, 
but introduced  biased noise. For each component in the sum in (\ref{ssq}), if $|f_i(x)| < \epsilon$ for some parameter $\epsilon>0$, then $f_i(x)$ is computed as

\begin{equation*}
f_i(x) = 
\left\{
\begin{array}{ll}
f_i(x) & \text{w.p.}\hspace{.5pc}1-\sigma\\
V & \text{w.p.} \hspace{.5pc}\sigma\\
\end{array}
\right.
\end{equation*}
for some parameter $\sigma>0$ and for some ``garbage value" $V$. If $f_i(x)\geq\epsilon$, then it is deterministically computed as $f_i(x)$. This noise is biased, with bias depending on $x$, and we should not expect any sort of averaging approximation to work well here. This is indeed indicated in our experiments, where various levels of $\sigma$ and $\epsilon$ are shown below.  
 There  choice of $V$ did not significantly affect the results, and in the experiments illustrated below $V=-10000$ was used. 
 Obviously, the intention here is that such a large negative value will cause STORM to see a trial step as promising, 
 when it may, in fact, yield an increase in function value if taken. 
 We propose the version of STORM presented as Algorithm \ref{STORM-failure} in the appendix.  

The key feature of Algorithm \ref{STORM-failure} is that on each iteration, the interpolation set changes minimally as in
a typical DFO trust region method, but the interpolated function values are computed afresh. Intuitively, this is the right
thing to do, since if a ``garbage value'' is computed at some point in the algorithm to either construct a model or provide
a function value estimate, 
we do not want its presence to affect the computation of models in subsequent iterations. No averaging is performed, as it can only cause harm in the setting.

Since we are not aware of any other optimization algorithm that is designed for this case of noise, we performed no comparisons, but 
experiment to discover how the method works as a function of the probability of failure.
On the test set of 53 problems, we ran Algorithm \ref{STORM-failure} 30 times and report the average percentage of instances that are
solved in the sense of \eqref{tau_sufficiently_solved} with $\tau=10^{-3}$ within a budget of $10000(n+1)$ function evaluations,
where $f^*$ was computed by Algorithm \ref{STORM-failure} with $\sigma=0$. In order to standardize the probability of failure over the test set,
we define the probability of success $p_s$ and then take $\sigma$ on a function with $m$ component to be 
$\sigma = 1-p_s^{(1/m)}$. The results are summarized in Figure \ref{fig:comp_fail}. 

\begin{figure}
\centering
\label{fig:comp_fail}
\caption{Average percentage of problems solved as a function of probability of success $p_s$.}
 \includegraphics[height=5cm]{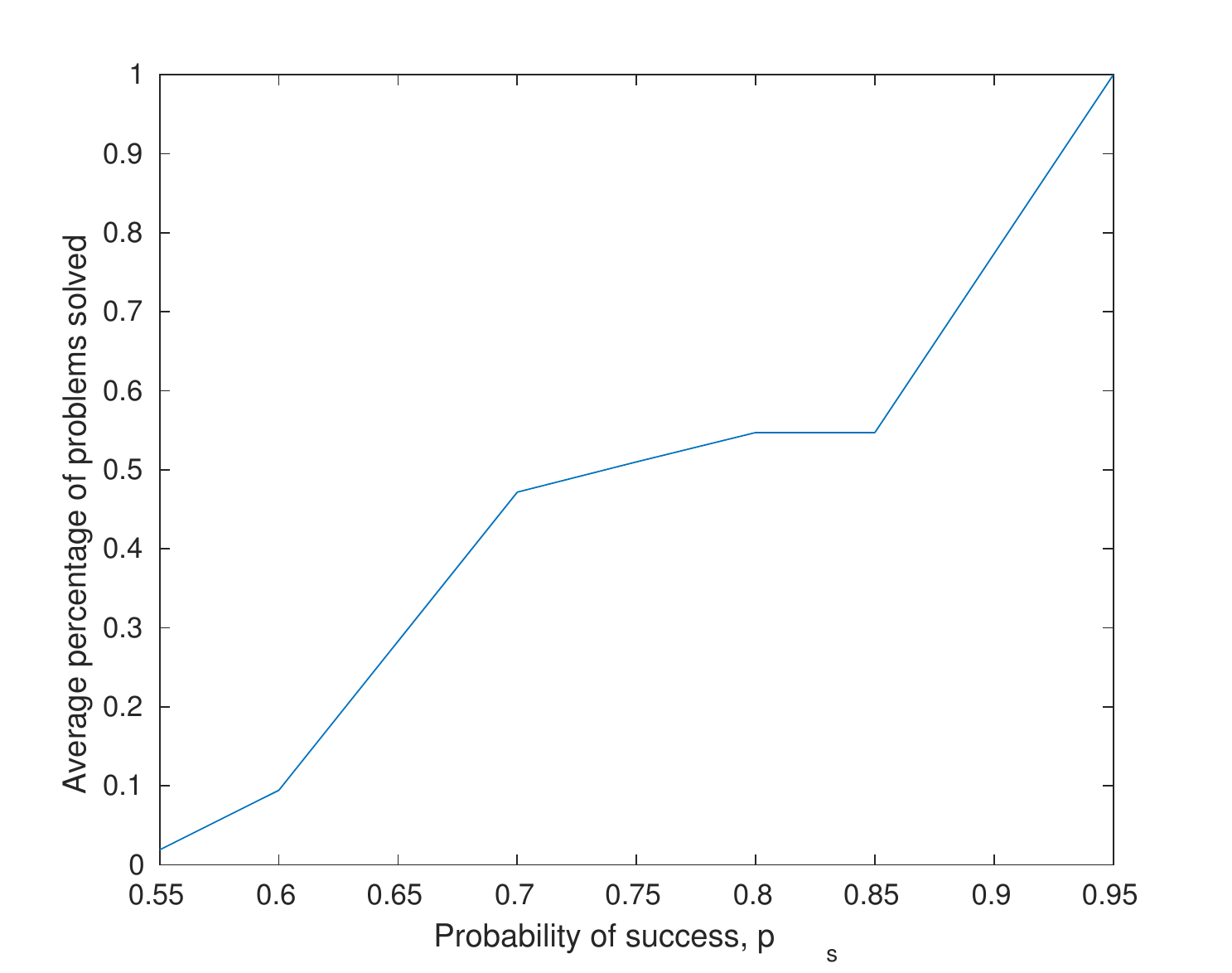}
\end{figure}

 
 This experiment suggests that the practical threshold at which STORM fails to make progress may be looser than that suggested
 by theory.  We will illustrate this idea through a simple example. 
Consider the minimization of the simple quadratic function

\begin{equation}
\label{simple_quadratic}
f(x) = \displaystyle\sum_{i=1}^n (x_i-1)^2.
\end{equation}

The minimizer uniquely occurs at the vector of all 1s. 
Now consider the minimization of this function under our setting of computation failure 
where we vary the probability parameter $\sigma$ and fix $\epsilon = 0.1$. 
Suppose on each iteration of STORM,  the interpolation set contains $(n+1)(n+2)/2$ points. 
Then, the probability of obtaining the correct quadratic model in the worst case where for all $i$, 
$|x_i-1|<\epsilon$  is precisely $\alpha = ((1-\sigma)^n)^{\frac{(n+1)(n+2)}{2}}$. 
Likewise, the probability of obtaining the correct function evaluation for $F_0$ and $F_s$ on each iteration 
in the worst case is $\beta = ((1-\sigma)^n)^2$. Now, supposing we  initialize STORM with the zero vector 
in $\mathbb{R}^n$, it is reasonable to assume that all iterates will occur near the unit cube $[0,1]^n$, 
and so we can use simple calculus to estimate a Lipschitz constant of the function over the relevant domain 
as $2\sqrt{n}$, and the Lipschitz constant of the gradient is constantly $2$.
These also serve as reasonable estimates of $\kappa_{ef}$ and $\kappa_{eg}$, respectively. 
Using parameter choices $\gamma=2$, $\eta_1=0.1$, $\eta_2=1$, 
we can use the bounds in \eqref{eq:thm411a} and \eqref{eq:thm411b} to
solve for the smallest allowable $(1-\sigma)$ for which our algorithm can guarantee convergence. 
For $n=2$, our theory suggests that we can safely lower bound $(1-\sigma) > 0.9592$ 
(which implies $\alpha\approx 0.6069$ and $\beta\approx 0.8467$), 
and for $n=10$, we can lower bound $(1-\sigma) > 0.9990$ 
(which implies $\alpha\approx 0.5233$ and $\beta\approx 0.9806$).

In Figure \ref{fig:simple_quad} for $n=2,10$, we plot an indicated level of $(1-\sigma)$ on the $x$-axis against the proportion of 100 randomly seeded instances with that level of $(1-\sigma)$ 
that Algorithm \ref{STORM-failure} managed to find a solution $x^*$ satisfying $f(x^*)< 10^{-5}$ within $10^4$ many
function evaluations using the discussed parameter choices. 
The red line shows the level of $(1-\sigma)$ that our theory predicted in the previous paragraph. 
As we can see, $(1-\sigma)$ can be quite smaller than predicted by our theory before the 
failure rate becomes unsatisfactory. 
As a particular example, in the $n=10$ case, when $(1-\sigma) = .998$, the corresponding probabilities are $\alpha\approx 0.266782$ and $\beta\approx  0.960751$, and yet $100\%$ of the instances were solved to the required level of accuracy. In other words, even though the models are eventually only accurate on roughly $27\%$ of the iterations, we still see satisfactory performance.\\

\begin{figure}
\label{fig:simple_quad}
\begin{center}
\includegraphics[height=6cm]{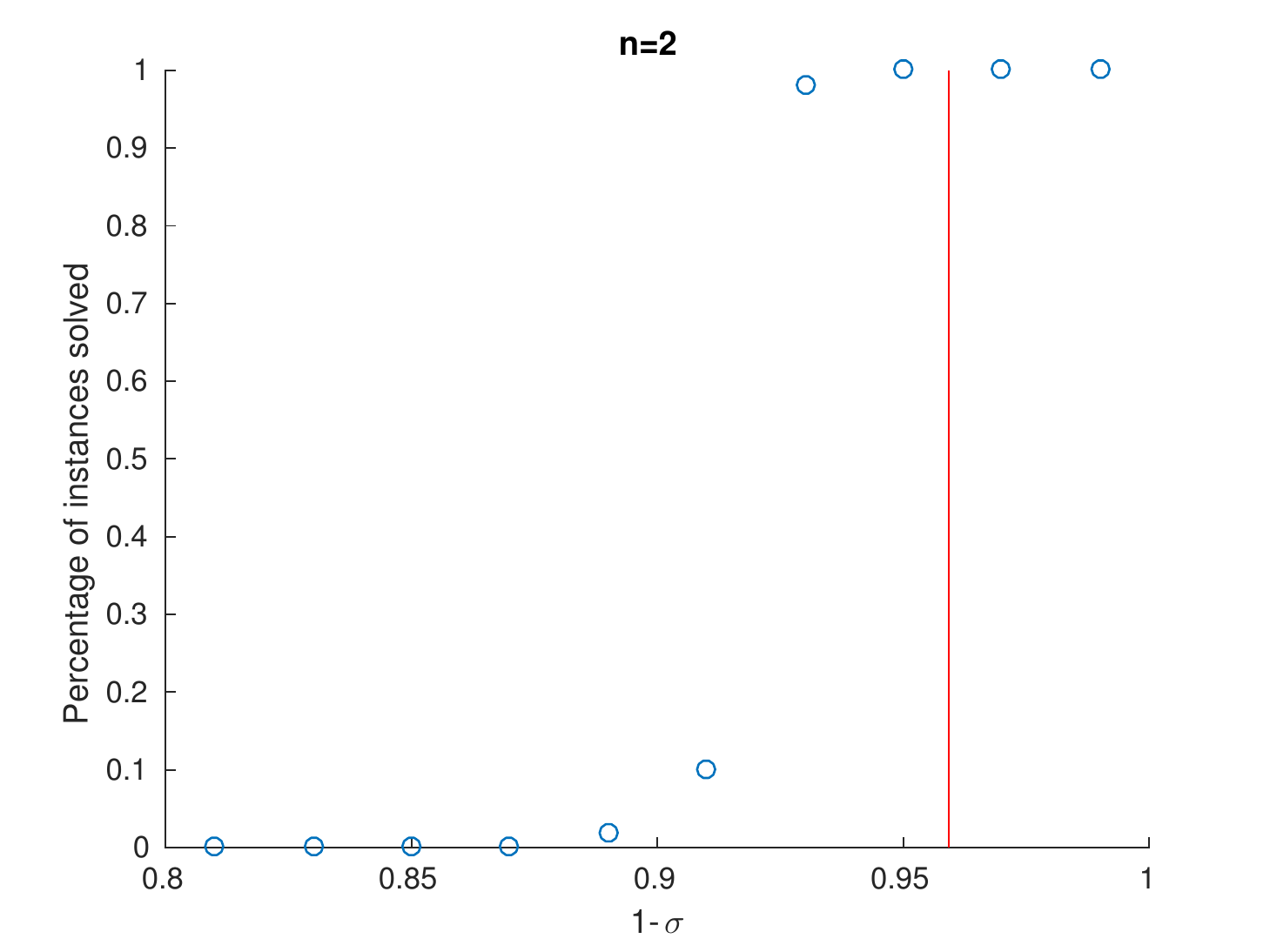}
\includegraphics[height=6cm]{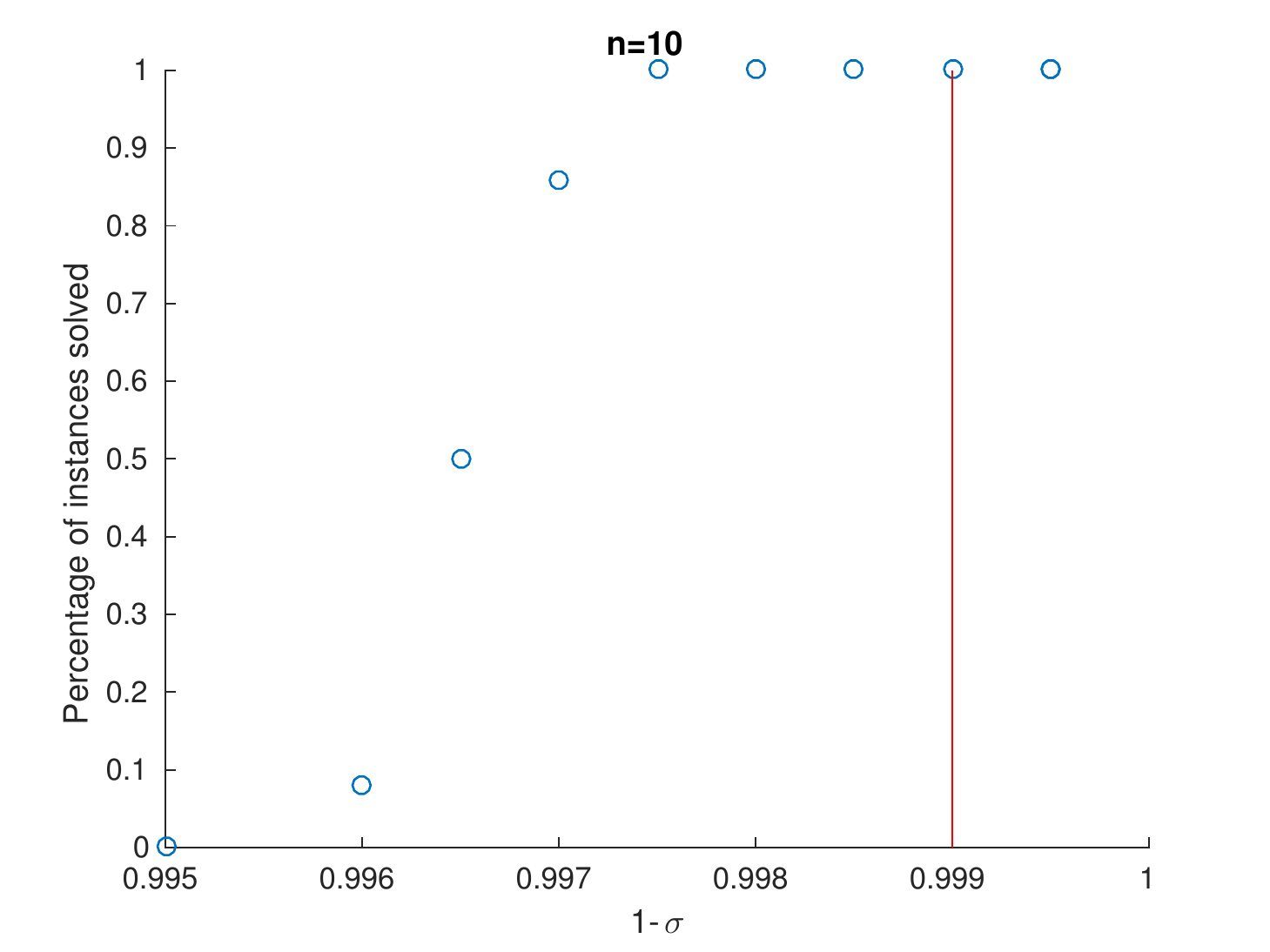}
\end{center}
\caption{Minimizing a simple quadratic function (dimension $n=2$ in the left, $n=10$ in the right) where with probability $1-\sigma$, a coordinate is computed incorrectly.} 
\end{figure}
\subsection{Stochastic gradient based method comparison}
In this subsection, we show how STORM applies to empirical risk minimization in machine learning.
Consider a training dataset of $N$ samples, $\{(x_i,y_i)\}_{i=1\dots N}$, where
$x_i\in \mathbb{R}^m$ is a vector of $m$ real-valued and $y_i\in\{-1,1\}$ indicates a positive or negative label respectively.
We will train a linear classifier and bias term $(w,\beta)\in\mathbb{R}^{m+1}$ by minimizing the smooth (convex) 
regularized logistic loss

$$f(w,\beta) =\frac{1}{N} \displaystyle\sum_{i=1}^N f_i(w,\beta)+\lambda\|w\|^2=\frac{1}{N}
 \displaystyle\sum_{i=1}^N \log(1 + \exp(-y_i(w^{T}x_i + \beta)))+\lambda\|w\|^2.$$

As in the typical machine learning setting, we will assume that $N >> m$  and computing $f(w,\beta)$ as well as 
$\nabla f(w,\beta)$ and $\nabla ^2 f(w,\beta)$ is prohibitive. Hence we will 
only compute estimates of these quantities by considering a sample $I \subset \{1,\dots,N\}$ of size
$|I| = n << N$, yielding

$$f_{I}(w,\beta) =\frac{1}{|I|}  \displaystyle\sum_{i\in I} f_i(w,\beta)+\lambda\|w\|^2, \ 
\nabla f_{I}(w,\beta) =\frac{1}{|I|}  \displaystyle\sum_{i\in I} \nabla f_i(w,\beta)+2\lambda w,$$
$$  \nabla^2f_{I}(w,\beta) =\frac{1}{|I|}  \displaystyle\sum_{i\in I} \nabla^2 f_i(w,\beta)+2\lambda I_n.$$
Thus we can construct models based on sample gradient and Hessians information and
we present the appropriate variant of  STORM as Algorithm \ref{STORMlog} in the Appendix. 

We compare  Algorithm \ref{STORMlog}  with   the well-known implementation
of Adagrad from the Ada-whatever package described in \cite{duchihazansinger}. We compare against this particular solver because
it is a well-understood stochastic gradient method used by the machine learning community that, like our algorithm, takes
adaptive step sizes, but unlike our algorithm, does not compute estimates of the loss function, but only computes averaged
stochastic gradients. 
For the choice of initialization in Algorithm \ref{STORMlog}, the following parameters were used: 
$\delta_{\max} = 10, \delta_0=1, x_0 = 0, \gamma=2,\eta_1=0.1,\eta_2=0.001,p_{\min} = m + 2, p_{\max} = N$.
Adagrad was also given the same initial point and an initial step size of $\delta_0=1$.

We implemented two versions of Algorithm \ref{STORMlog}: one which uses stochastic Hessians, and a second where
we do not compute stochastic Hessians, effectively setting $H_k=0$ on each iteration, yielding a trivial subproblem in the
step calculation. 

\begin{figure}
 \includegraphics[height=5cm]{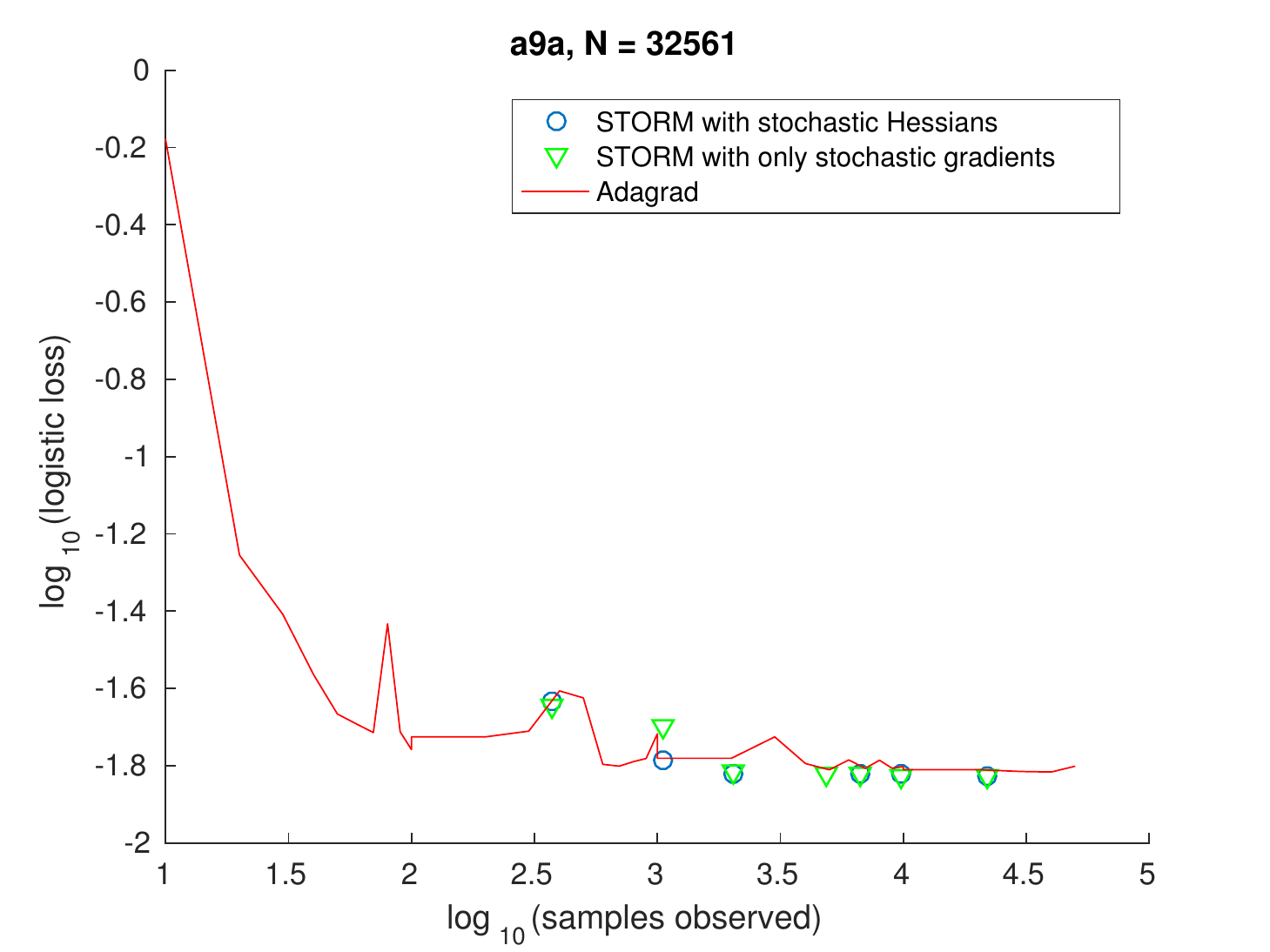} \includegraphics[height=5cm]{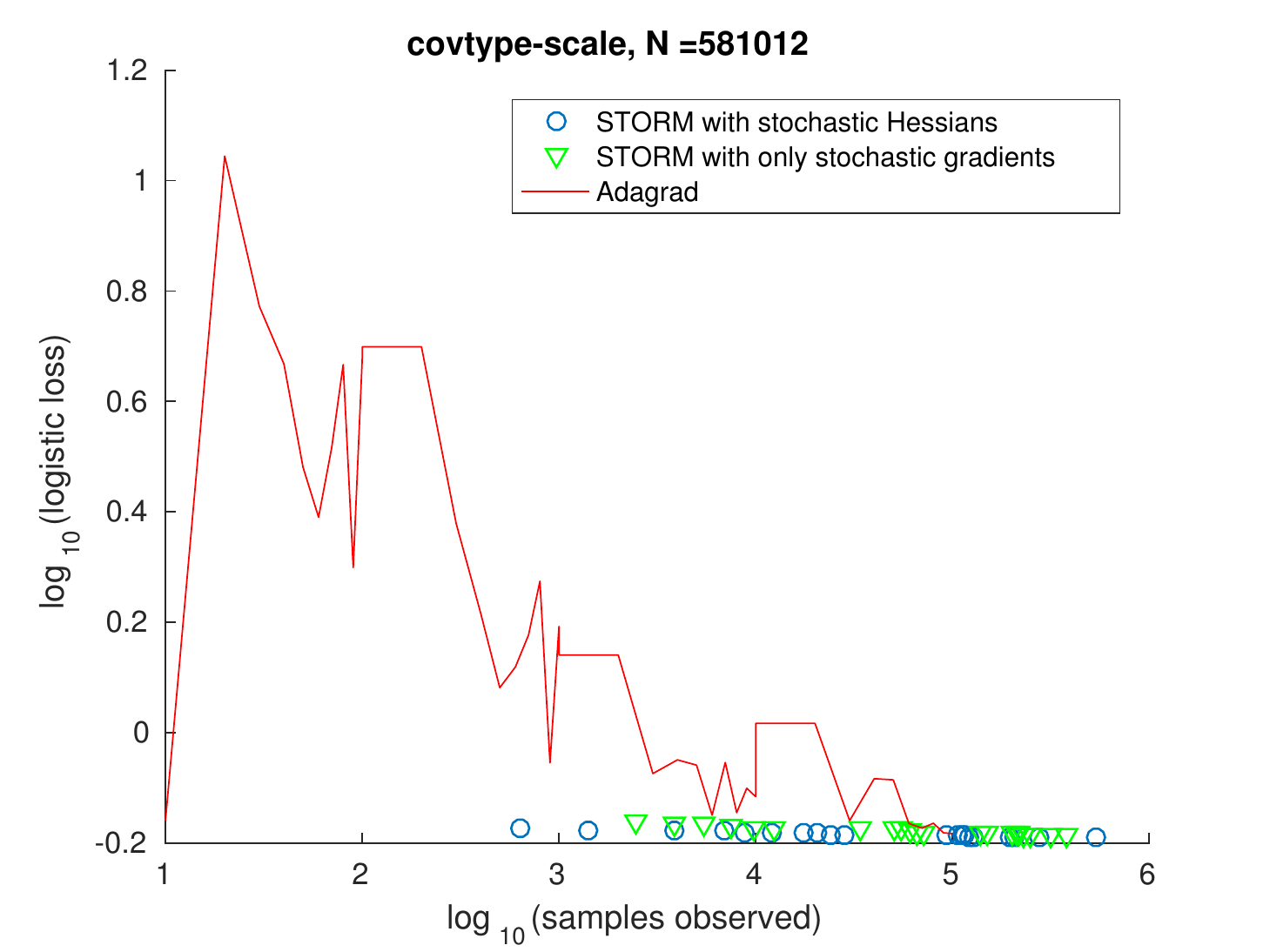}\\
 
 \includegraphics[height=5cm]{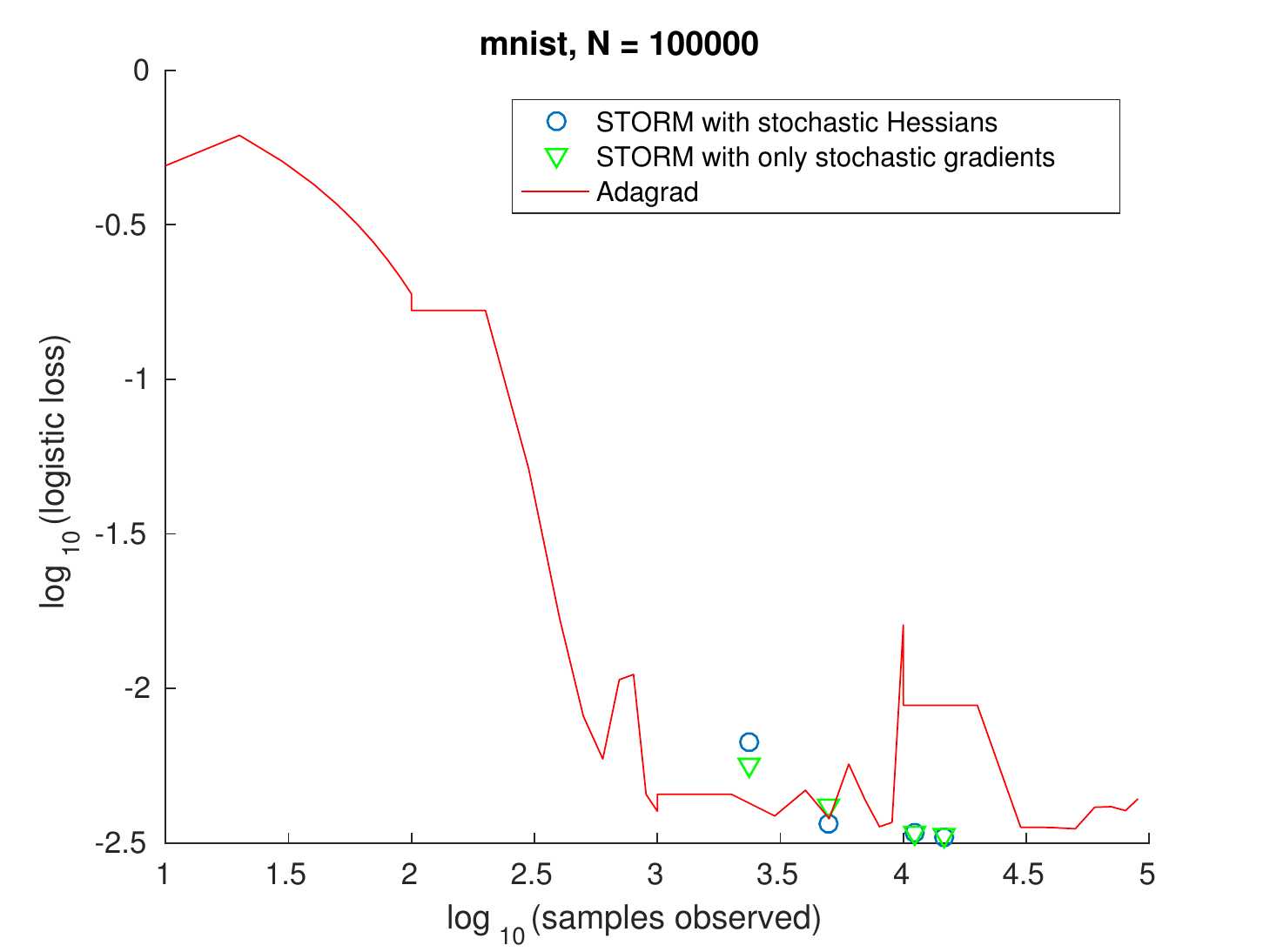} \includegraphics[height=5cm]{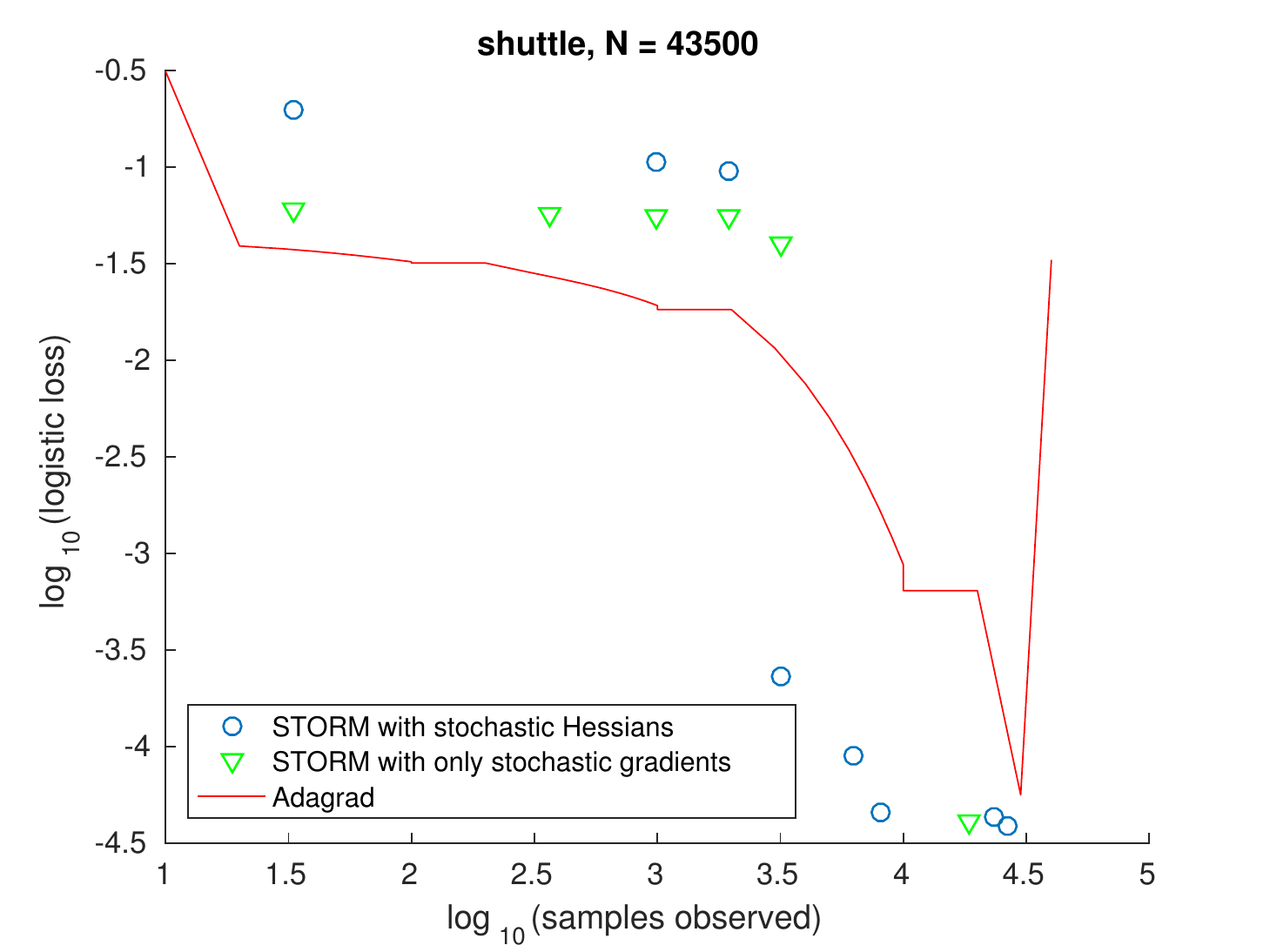}
 \caption{Trajectory of training logistic loss on four datasets.}
 \label{fig:trainingerror}
\end{figure}

\begin{figure}
  \includegraphics[height=5cm]{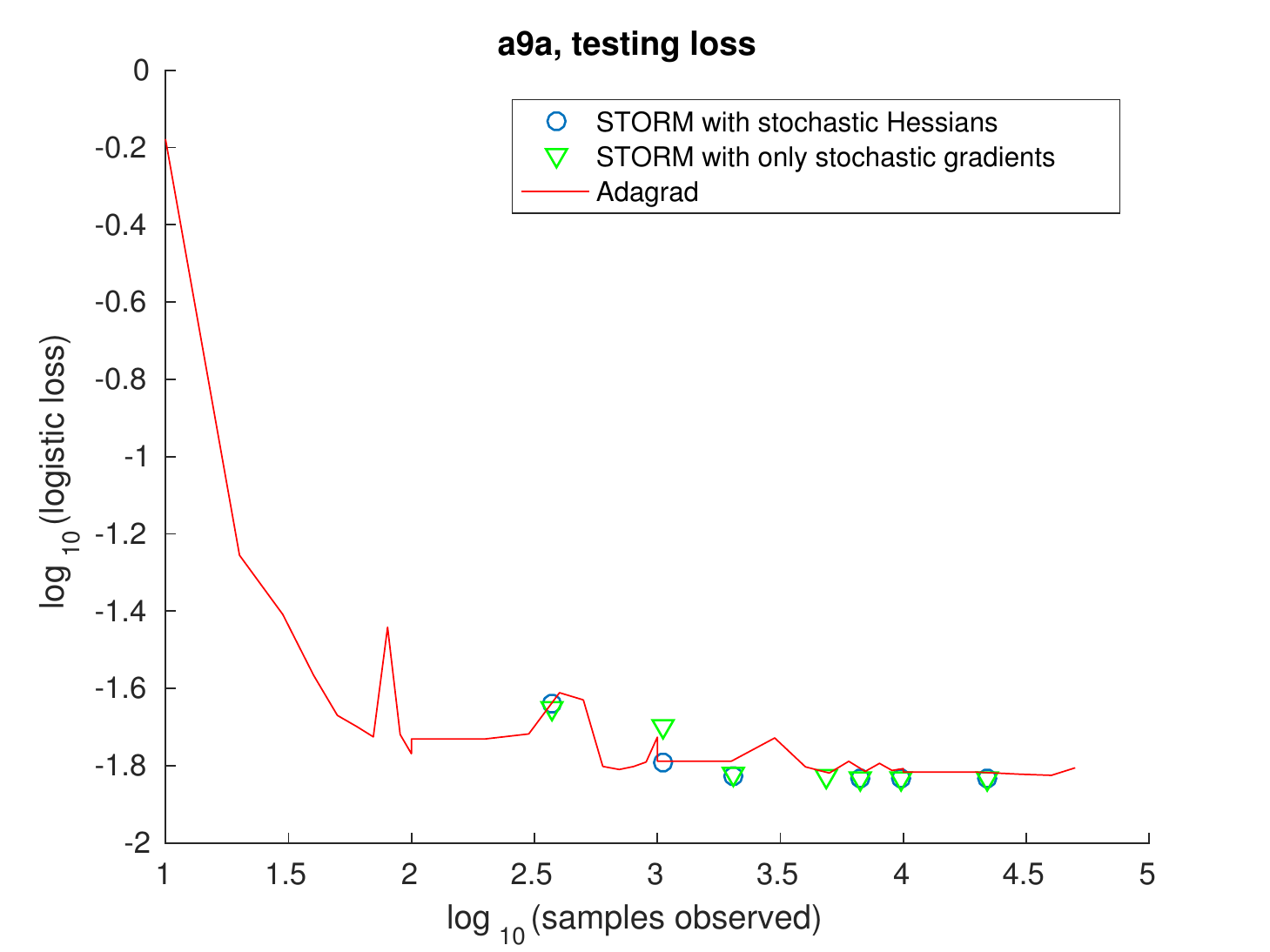} \includegraphics[height=5cm]{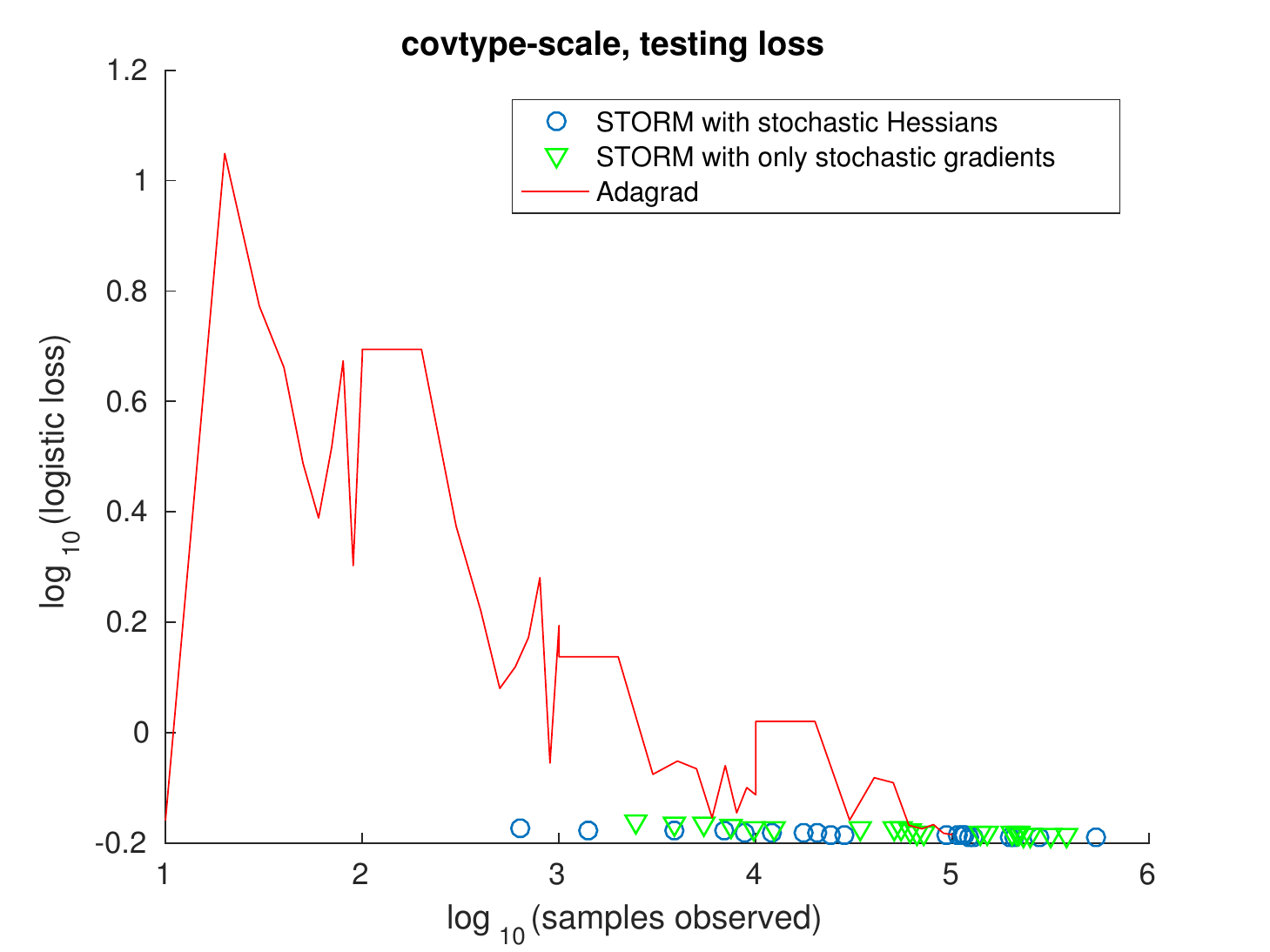}\\
 
 \includegraphics[height=5cm]{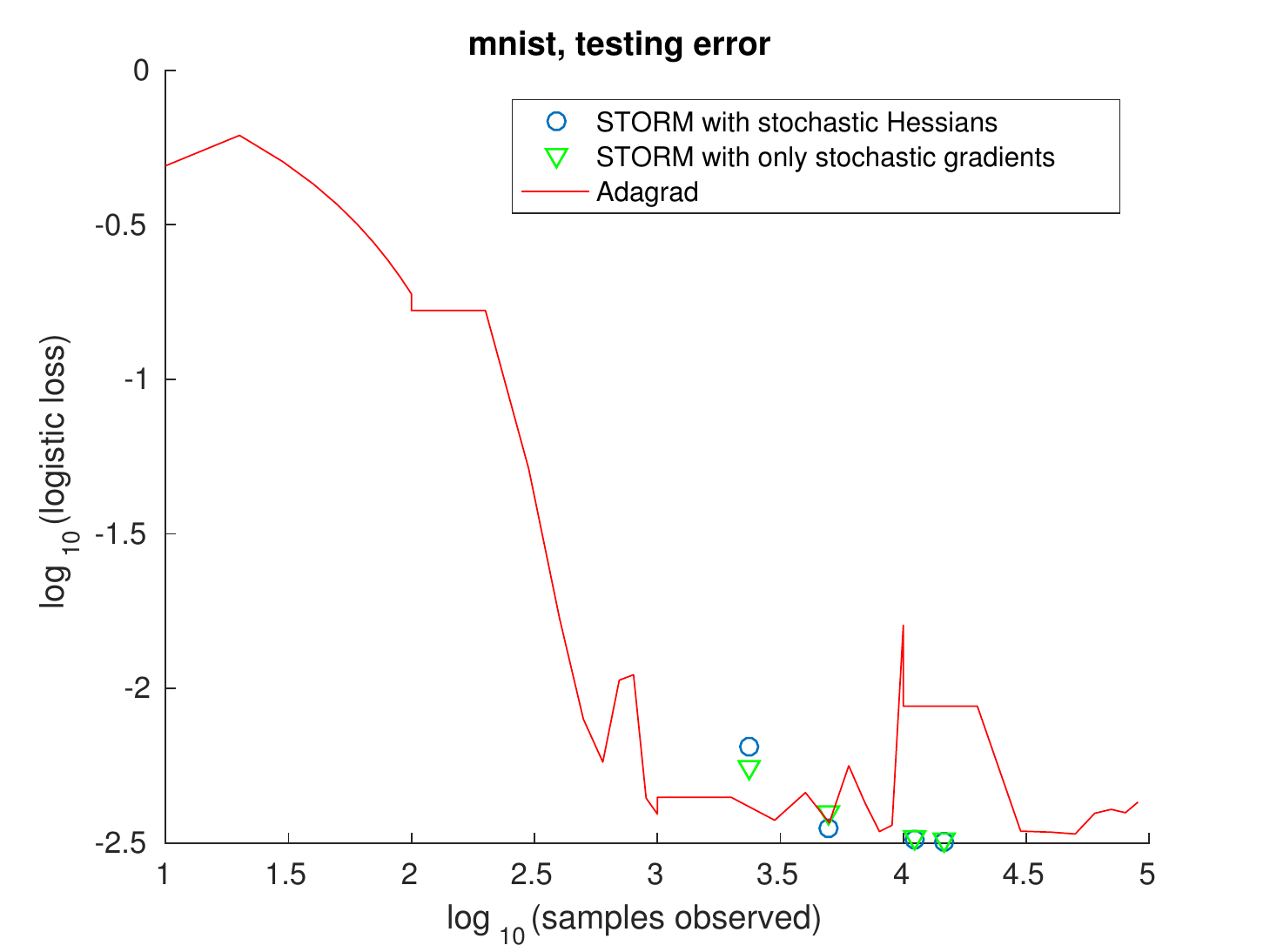} \includegraphics[height=5cm]{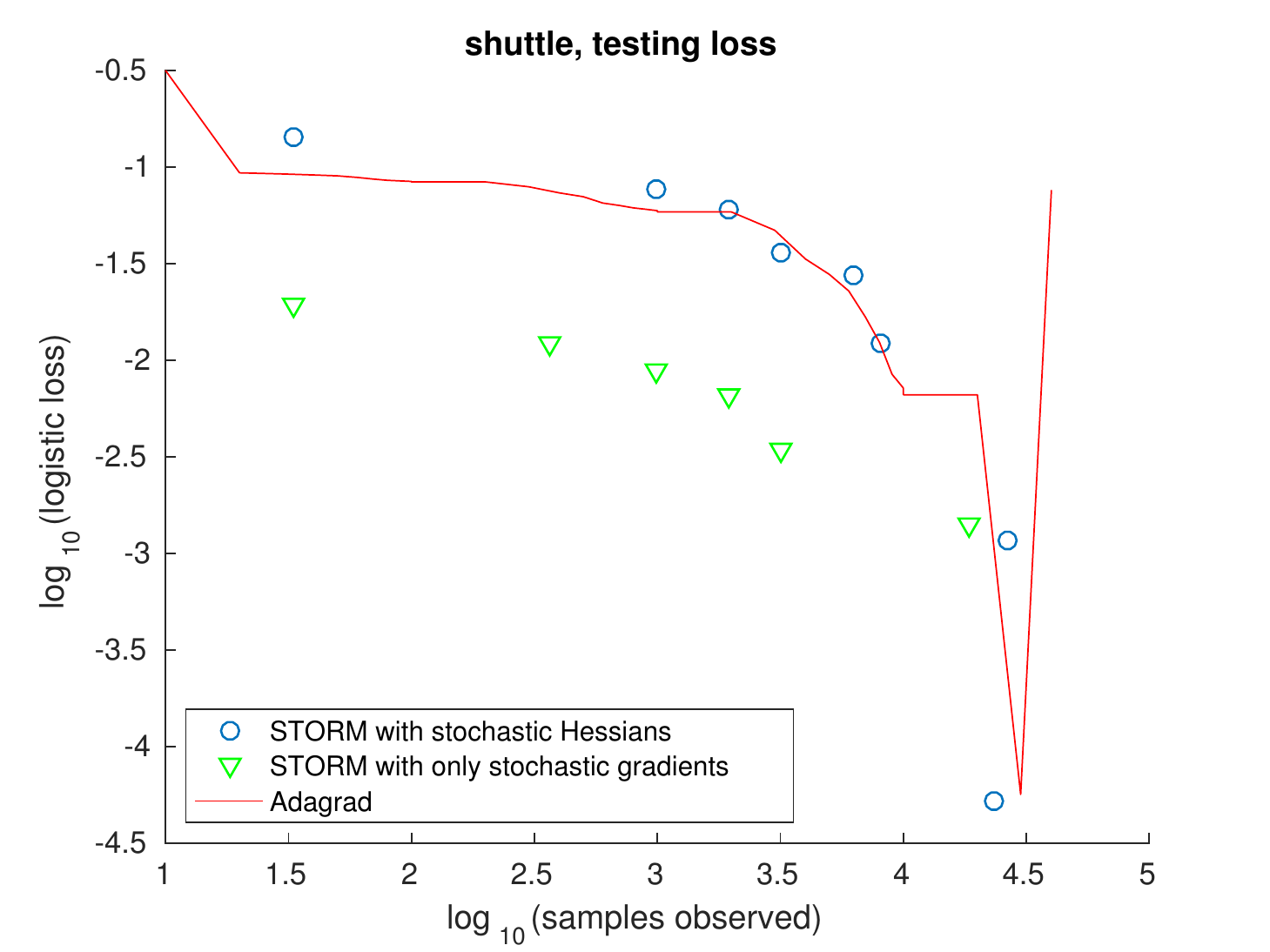}
 \caption{Trajectory of testing logistic loss on four datasets.}
 \label{fig:testingerror}
\end{figure}

For each of the datasets, we randomly partition $N$ into a training set of size $\lfloor 0.95*N\rfloor$ and
 a testing set of size $\lceil 0.05*N\rceil$. 
We set the maximum budget of data evaluations for each solver equal to the size of the training set, thus comparing 
various solvers' performance with a budget of roughly one full pass through the dataset.
In the two implementations of STORM, we plot in \ref{fig:trainingerror} the true training loss function value at the end of each successful iteration, while for Adagrad,
we simply plot the true training loss function value over an evenly spaced array of function value counts. 
Likewise in \ref{fig:testingerror}, we plot at the same points the value of the holdout testing loss function value.

Notice that, as expected, the true function values produced by Adagrad can vary widely over this horizon, but implementations
of STORM tend to yield fairly stable decreasing trajectories over its successful iterations. Also, we see that
the loss seems to generalize fairly well to the holdout test data. 

\bibliographystyle{plain}
\bibliography{references}

\section{Appendix}

\begin{algorithm}
\label{tr-saa}
  \SetAlgoNlRelativeSize{-5}
  \SetKw{break}{break}
  \SetKw{true}{true}
  \SetKw{goto}{go to}
 \textbf{(Initialization):} Choose an initial point $x_0$ and an initial trust-region radius $\d_0\in (0,\d_{\max})$ with 
    $\d_{\max}>0$. Choose  constants $\gamma>1$, $\eta_1\in(0,1)$, $\eta_2\in(0,\infty)$, $p_{\max}=(n+1)(n+2)/2$, $p_{\min}\geq n+1$. Set $k \gets 0$. 
    Select some initial interpolation set $Y_0\subset B(x_k,\d_k)$ so that $|Y_0|\leq p_{\max}$ and $x_0 \in Y_0$. 
    Compute an averaged function value estimate $\bar f(y) = \frac{1}{p_{min}}\sum_{i=1}^{p_{\min}} \tilde f(y,\omega_i)$ at each $y\in Y_0$. \\
    \While{\true}{
\textbf{(Update sample rate):} Set sample rate $p_k = \max\{p_{\min} + k, 1/\d\}.$\\

\textbf{(Update interpolation value estimates):} For each $y\in Y_k\cap Y_{k-1}$, compute 
$\bar f(y)= \frac{1}{p_k}[\sum_{i=p_{k-1}+1}^{p_k} \tilde f(y,\omega_i)+\bar f(y)]$ and for each
$y\in Y_k\setminus Y_{k-1}$, compute 
$\bar f(y)= \frac{1}{p_k}\sum_{i=1}^{p_k} \tilde f(y,\omega_i)$ .\\

\textbf{If the algorithm is TR-SAA-resample,} for each $y\in Y_k$, compute
$\bar f(y) = \frac{1}{p_k}\sum_{i=1}^{p_k} \tilde f(y,\omega_i)$.

\textbf{(Model building):} Build a quadratic model $m_k(x_k+s)=f_k+g_k^\top s+s^\top H_k s$  with $s=x-x_k$ that interpolates $\bar f(y)$ 
at the points of $Y_k$.\\

\textbf{(Step calculation):} Compute $s_k = \arg \underset{s: \| s \|\le \d_k}{\min}  m_k(s) $ (approximately) so that $s_k$ 
satisfies \eqref{eqn:CS}.\\

\textbf{(Estimate calculation):} Compute new estimate $f_k^s =\frac{1}{p_k} \sum_{i=1}^{p_k} \tilde f(x_k + s_k,\omega_i)$ of $f(x_k+s_k)$.
Denote the current estimate of $f(x_k)$ by $f_k^0$.\\

\textbf{(Acceptance of the trial point):} Compute $\rho_k = \dfrac{f_k^0-f_k^s}{m_k(x_k)-m_k(x_k+s_k)}.$\\ 
If $\rho_k\ge \eta_1$ and $\|g_k\|\geq \eta_2\delta_k$, then $x_{k+1} \gets x_k+s_k$; otherwise, $x_{k+1}\gets x_k$.\\

\textbf{(Trust-region radius update):} If $\rho_k\ge \eta_1$, $\d_{k+1} \gets\min\{  \gamma \d_k,\d_{\max}\}$; 
otherwise $\d_{k+1}\gets\gamma^{-1} \d_k$.\\

\textbf{(Interpolation set update):} Augment $Y_k$ with $x_k+s_k$.
If $|Y_k| > p_{\max}$, remove the point of $Y_k$ furthest from $x_{k+1}$.\\

\textbf{(Iterate):} $k \gets k+1$.
}
  \caption{TR-SAA}
\end{algorithm}

\begin{algorithm}
\label{STORM-addrel}
  \SetAlgoNlRelativeSize{-5}
  \SetKw{break}{break}
  \SetKw{true}{true}
  \SetKw{goto}{go to}
 \textbf{(Initialization):} Choose an initial point $x_0$ and an initial trust-region radius $\d_0\in (0,\d_{\max})$ with 
    $\d_{\max}>0$. Choose  constants $\gamma>1$, $\eta_1\in(0,1)$, $\eta_2\in(0,\infty)$, $p_{\min}$; Set $k \gets 0$. 
    Select a regression set $Y_0 \subset B(x_k,\d_k)$ satisfying $|Y_0| = p_{\min}$. 
     \\
    \While{\true}{
\textbf{(Update sample rate):} Choose a sample rate $p_k = \max\{p_{\min} + k, 1/\d\}.$\\

\textbf{(Regression set update):} Uniformly sample a regression set $Y_k \subset B(x_k,\d_k)$ satisfying $|Y_k| = p_k$.\\

\textbf{(Compute new regression value estimates):} For each $y\in Y_k$, compute a single estimate $\tilde f(y,\omega)$.\\

\textbf{(Model building):} Build a quadratic model $m_k(x_k+s)=f_k+g_k^\top s+s^\top H_k s$  with $s=x-x_k$ that regresses $\tilde f(y)$ 
at the points of $Y_k$.\\

\textbf{(Step calculation):} Compute $s_k = \arg \underset{s: \| s \|\le \d_k}{\min}  m_k(s) $ (approximately) so that $s_k$ 
satisfies \eqref{eqn:CS}.\\

\textbf{(Estimates calculation):} Compute new estimates $f_k^0 = \sum_{i=1}^{p_k} \tilde f(x_k,\omega_i)$ and $f_k^s = \sum_{i=1}^{p_k} \tilde f(x_k + s_k,\omega_i)$
of $f(x_k)$ and $f(x_k+s_k)$.\\

\textbf{(Acceptance of the trial point):} Compute $\rho_k = \dfrac{f_k^0-f_k^s}{m_k(x_k)-m_k(x_k+s_k)}.$\\ 
If $\rho_k\ge \eta_1$ and $\|g_k\|\geq \eta_2\delta_k$, then $x_{k+1} \gets x_k+s_k$; otherwise, $x_{k+1}\gets x_k$.\\

\textbf{(Trust-region radius update):} If $\rho_k\ge \eta_1$, $\d_{k+1} \gets\min\{  \gamma \d_k,\d_{\max}\}$; 
otherwise $\d_{k+1}\gets\gamma^{-1} \d_k$.\\

\textbf{(Iterate):} $k \gets k+1$.
}
  \caption{STORM for unbiased noise}
\end{algorithm}

\begin{algorithm}
\label{STORM-failure}
  \SetAlgoNlRelativeSize{-5}
  \SetKw{break}{break}
  \SetKw{true}{true}
  \SetKw{goto}{go to}
 \textbf{(Initialization):} Choose an initial point $x_0$ and an initial trust-region radius $\d_0\in (0,\d_{\max})$ with 
    $\d_{\max}>0$. Choose  constants $\gamma>1$, $\eta_1\in(0,1)$, $\eta_2\in(0,\infty)$, $p_0\geq n+1$, $p_{\max}=(n+1)(n+2)/2$. Set $k \gets 0$. 
    Select an interpolation set $Y_0 \subset B(x_k,\d_k)$ satisfying $|Y_0| = p_0$. 
     \\
    \While{\true}{

\textbf{(Compute new interpolation value estimates):} For each $y\in Y_k$, compute a (new) estimate $\tilde f(y,\omega)$.\\

\textbf{(Model building):} Build a quadratic model $m_k(x_k+s)=f_k+g_k^\top s+s^\top H_k s$  with $s=x-x_k$ that interpolates $\tilde f(y)$ 
at the points of $Y_k$.\\

\textbf{(Step calculation):} Compute $s_k = \arg \underset{s: \| s \|\le \d_k}{\min}  m_k(s) $ (approximately) so that $s_k$ 
satisfies \eqref{eqn:CS}.\\

\textbf{(Estimates calculation):} Compute new estimates $f_k^0 = \tilde f(x_k,\omega_i)$ and $f_k^s = \tilde f(x_k + s_k,\omega_i)$
of $f(x_k)$ and $f(x_k+s_k)$.\\

\textbf{(Acceptance of the trial point):} Compute $\rho_k = \dfrac{f_k^0-f_k^s}{m_k(x_k)-m_k(x_k+s_k)}.$\\ 
If $\rho_k\ge \eta_1$ and $\|g_k\|\geq \eta_2\delta_k$, then $x_{k+1} \gets x_k+s_k$; otherwise, $x_{k+1}\gets x_k$.\\

\textbf{(Trust-region radius update):} If $\rho_k\ge \eta_1$, $\d_{k+1} \gets\min\{  \gamma \d_k,\d_{\max}\}$; 
otherwise $\d_{k+1}\gets\gamma^{-1} \d_k$.\\

\textbf{(Interpolation set update):} Augment $Y_k$ with $x_k+s_k$.
If $|Y_k| > p_{\max}$, remove the point of $Y_k$ furthest from the new trust region center $x_{k+1}$.

\textbf{(Iterate):} $k \gets k+1$.
}
  \caption{STORM for unbiased noise}
\end{algorithm}

\begin{algorithm}
\label{STORMlog}
  \SetAlgoNlRelativeSize{-5}
  \SetKw{break}{break}
  \SetKw{true}{true}
  \SetKw{goto}{go to}
 \textbf{(Initialization):} Choose an initial point $x_0 = (w_0,\beta_0)$ and an initial trust-region radius $\d_0\in (0,\d_{\max})$ with 
    $\d_{\max}>0$. Choose  constants $\gamma>1$, $\eta_1\in(0,1)$, $\eta_2\in(0,\infty)$, $p_0$, $p_{\max}$; Set $k \gets 0$.  
     \\
    \While{\true}{

\textbf{(Determine sample rate):} Choose a sample rate $p_k$. In our implementation, we will use 
$p_k = \min\{p_{\max},\max\{100*k + p_0, \lceil 1/\d_k^2\rceil\}\}$. \\

\textbf{(Model building):} Uniformly (without replacement) draw a sample $I_k\subset \{1,\dots,N\}$. Compute a stochastic gradient 
$g_k = \nabla f_{I_k}(w,\beta)$ and stochastic Hessian $H_k = \nabla^2 f_{I_k}(w,\beta)$. Define a quadraticmodel
$m_k(s) = g_k^Ts + \frac{1}{2}s^TH_k s$.
and stochastic Hessian \\

\textbf{(Step calculation):} Compute $s_k = \arg \underset{s: \| s \|\le \d_k}{\min}  m_k(s) $ (approximately) so that $s_k$ 
satisfies \eqref{eqn:CS}.\\

\textbf{(Estimates calculation):} Draw new samples $I_k^0, I_k^s$ and compute estimates
$f_k^0 = f_{I_k^0}(x_k)$ and $f_k^s = f_{I_k^s}(x_k+s_k)$
of $f(x_k)$ and $f(x_k+s_k)$, respectively.\\

\textbf{(Acceptance of the trial point):} Compute $\rho_k = \dfrac{f_k^0-f_k^s}{m_k(x_k)-m_k(x_k+s_k)}.$\\ 
If $\rho_k\ge \eta_1$ and $\|g_k\|\geq \eta_2\delta_k$, then $x_{k+1} \gets x_k+s_k$; otherwise, $x_{k+1}\gets x_k$.\\

\textbf{(Trust-region radius update):} If $\rho_k\ge \eta_1$, $\d_{k+1} \gets\min\{  \gamma \d_k,\d_{\max}\}$; 
otherwise $\d_{k+1}\gets\gamma^{-1} \d_k$.\\

\textbf{(Iterate):} $k \gets k+1$.
}
  \caption{STORM for minimizing logistic loss}
\end{algorithm}

\end{document}